  \chardef\forshowkeys=0
  \chardef\refcheck=0
  \chardef\showllabel=0
  \chardef\sketches=0
  \chardef\showcolors=0
\newcommand\QQ{B}
\newcommand{\R}{\mathbb{R}}
\newcommand{\Z}{ \mathbb{Z}}
\newcommand{\N}{ \mathbb{N}}
\newcommand{\eqnb}{ \begin{equation}}
\newcommand{\eqne}{ \end{equation}}
\theoremstyle{plain}
\newtheorem{theorem}{Theorem}[section]
\newtheorem{lemma}[theorem]{Lemma}
\newtheorem{proposition}[theorem]{Proposition}
\newtheorem{corollary}[theorem]{Corollary}
\theoremstyle{remark}
\newtheorem{remark}[theorem]{Remark}
\newtheorem{definition}[theorem]{Definition}
\def\lec{\lesssim}
\def\gec{\gtrsim}
\def\llabel#1{\marginnote{\color{lightgray}\rm\small(#1)}[-0.0cm]\notag}
\def\llabel#1{\notag}
\numberwithin{equation}{section}
\definecolor{mygray}{rgb}{.6,.6,.6}
\definecolor{myblue}{rgb}{9, 0, 1}
\definecolor{colorforkeys}{rgb}{1.0,0.0,0.0}
\newlength\mytemplen
\newsavebox\mytempbox
\newcommand\mybluebox{%
\@ifnextchar[
{\@mybluebox}%
{\@mybluebox[0pt]}}
\def\@mybluebox[#1]{%
\@ifnextchar[
{\@@mybluebox[#1]}%
{\@@mybluebox[#1][0pt]}}
\def\@@mybluebox[#1][#2]#3{
\sbox\mytempbox{#3}%
\mytemplen\ht\mytempbox
\advance\mytemplen #1\relax
\ht\mytempbox\mytemplen
\mytemplen\dp\mytempbox
\advance\mytemplen #2\relax
\dp\mytempbox\mytemplen
\colorbox{myblue}{\hspace{1em}\usebox{\mytempbox}\hspace{1em}}}
\def\loc{\text{loc}}
\def\uloc{\text{uloc}}
\def\ee{\mathrm{e}}
\def\div{\mathop{\rm div}\nolimits}
\def\dist{\mathop{\rm dist}\nolimits}
\def\card{\mathop{\rm card}\nolimits}  
\def\supp{\mathop{\rm supp}\nolimits}
\def\indeq{\quad{}}
\definecolor{colorigor}{rgb}{1, 0.2, 0.8}
\definecolor{colorabhi}{rgb}{1, 0.5, 0}
\def\colb{\color{black}}
\definecolor{colorgggg}{rgb}{0.1,0.5,0.3}
\definecolor{colorllll}{rgb}{0.0,0.7,0.0}
\definecolor{colorhhhh}{rgb}{0.3,0.75,0.4}
\definecolor{colorpppp}{rgb}{0.7,0.0,0.2}
\definecolor{coloroooo}{rgb}{0.45,0.0,0.0}
\definecolor{colorqqqq}{rgb}{0.1,0.7,0}
\def\cole{\colb}
\definecolor{coloraaaa}{rgb}{0.6,0.6,0.6}
\def\comma{ {\rm ,\qquad{}} }
\def\fractext#1#2{{#1}/{#2}}
\newcommand{\p}{\partial}
\newcommand{\If}{I_{\text{far}}}
\newcommand{\In}{I_{\text{near}}}
\renewcommand{\sc}{\mathscr{C}}
\renewcommand{\d}{\mathrm{d}}
\DeclareMathOperator*{\esssup}{ess\,sup}
\def\comma{ {\rm ,\qquad{}} }        
\begin{document}
\baselineskip=5.8truemm
\title{A unified theory of existence of suitable weak solutions to the 3D~incompressible Navier-Stokes equations for non-decaying initial data}

\author{
  Abhishek Balakrishna\thanks{University of Southern California, Los Angeles, CA, email: ab45315@usc.edu.}%
  \and
  Igor Kukavica\thanks{University of Southern California, Los Angeles, CA, email: kukavica@usc.edu.}
  \and
  Wojciech O\.za\'nski\thanks{Florida State University, Tallahassee, FL, and Princeton University, Princeton, NJ, email: wozanski@fsu.edu.}
}

\maketitle
\begin{abstract}
We consider any cover $\mathscr{C}$ of $\R^3$ by balls of radius bigger or equal $1$ satisfying two conditions: (i)~any ball intersects at most $\sigma>0$ other balls, and (ii)~intersecting balls have comparable sizes. We consider a natural Morrey-type space such that the $L^2_{\uloc}$ setting
of Lemari\'e-Rieusset (\emph{Recent Developments in the Navier–Stokes Problem},  2002) and the dyadic-type space considered by Bradshaw and Kukavica (\emph{J. Math. Fluid Mech.}, 22(1), 2020) are particular cases. We provide a~priori estimates and prove local existence of weak solutions in two cases; first, when there exists $\epsilon>0$ such that $|B|^{1/3} \lec |x_B|^{1-\epsilon}$ for all $B\in \mathscr{C}$, where $x_B$ denotes  the center of~$B$,
or when
$|B|^{1/3} \gec 1+ |x_B|$ for all~$B\in\mathscr{C}$.
In particular,  we introduce a new non-divergence-free approach to the construction of weak solutions, which simplifies the existence proof in the  $L^2_{\mathrm{uloc}}$ setting. In addition, for the dyadic setting,
we do not require vanishing at the spatial infinity. The constructed solutions are suitable in the sense of Caffarelli, Kohn, and Nirenberg, thus allowing an application of the partial regularity theory.
\end{abstract}

\section{Introduction}
We consider the 3D~incompressible  Navier-Stokes equation (NSE) on $\mathbb{R}^3\times(0,T)$, for $T>0$:
\begin{equation}\label{nse}
\begin{split}
	\frac{\partial u}{\partial t}+(u\cdot\nabla)u-\Delta u+\nabla p&=f,\\
   	 \div u&=0,
	 \end{split}
\end{equation}
where we set the viscosity to~$1$.
Here, $u$ denotes the fluid velocity and $p$ denotes the pressure function. The fluid velocity evolves from a divergence-free initial data $u_0\colon \mathbb{R}^3\to\mathbb{R}^3$. In his seminal work, Leray \cite{L} (see~\cite{OP} for a comprehensive modern review) established the existence of global in time weak solutions for~\eqref{nse}
satisfying the strong energy inequality
\begin{equation*}
        \frac{1}{2}
	\int_{\mathbb{R}^3}|u(t)|^2
	+
       \int_s^t\int_{\mathbb{R}^3}|\nabla u|^2
       \leq
       \frac{1}{2}
       \int_{\mathbb{R}^3}|u (s) |^2
\end{equation*}
for almost all $s\geq 0$,
including $s=0$,
and all $t\geq s$.

Since then,  many works aimed at  weakening this notion of a solution in order to obtain existence with more general data. In particular, non-decaying solutions have been considered. We mention the recent studies by Abe and Giga\cite{A,A2,AG,AG2}, concerning the Stokes and Navier-Stokes equations in $L^\infty$, Gallay and Slijepčević \cite{GS}, on the boundedness of the 2D Navier-Stokes equations, and Maremonti and Shimizu \cite{MS}, as well as Kwon and Tsai \cite{KwT}, on global weak solutions with initial data with slowly decaying oscillations at  infinity.
 See also~\cite{BK2, K2, KV} on local existence of weak bounded solutions. Lemari\'e-Rieusset pioneered in \cite{LR} the notion of local energy solutions on~$\mathbb{R}^3$, which are concerned with initial data $u_0\in L^2_{\uloc}(\R^3)$. The work was later extended by Kikuchi and Seregin~\cite{kikuchi}. Motivated by \cite{16,kikuchi,LR,BK1}, we introduce in the next definition the notion of local energy solutions that is of interest.

\begin{definition}\label{locenergy}
	A divergence-free vector field $u\in L^2_{\loc}(\mathbb{R}^3\times[0,T))$ is a \emph{local energy solution}  to~\eqref{nse} with divergence-free initial data $u_0\in L^2_{\loc}(\mathbb{R}^3)$ if:
	\begin{enumerate}
		\item[(i)] $u\in L^\infty(0,T;L^2_{\loc})\cap
		L^2_{\loc}(\mathbb{R}^3\times[0,T])$
with		
		$\nabla u \in L^2_\uloc$,
		\item[(ii)] for some $p\in L^{3/2}_{\loc}(\mathbb{R}^3\times(0,T))$, the pair $(u,p)$ is a distributional solution to \eqref{nse},
\item[(iii)] the function $t\mapsto \int u(x,t)\cdot w(x)\,\d x$ is continuous on $[0, T )$ for any compactly supported $w\in L^2(\mathbb{R}^3)$,  with
$u(\cdot,0)=u_0$,
\item[(iv)] $(u,p)$ is a suitable solution in the sense of Caffarelli–Kohn–Nirenberg, i.e., for all cylinders  $U$ compactly supported in $\mathbb{R}^3\times[0,T)$ and all non-negative $\phi\in C_0^\infty(U)$, we have the local energy inequality
			\begin{align}
  			\begin{split}
&		\int |u (t) |^2 \phi (t) +	2\int_0^t \int |\nabla u|^2\phi
        \\&\indeq
			\leq \int |u_0 |^2 \phi (0) +
			\int_0^t \int |u|^2(\partial_t\phi
			+ \Delta\phi)+\int_0^t \int(|u|^2+2p)(u\cdot\nabla\phi)
  \end{split}
\label{localenergy}
\end{align}
for almost every $t\in (0,T)$,
		\item[(v)] for every ball $\QQ\subset\mathbb{R}^3$, there exist sets $\QQ^*$, $\QQ^{**}$ such that $\QQ \Subset \QQ^* \Subset \QQ^{**}$, and a function $p_\QQ(t)\in  L^{3/2}((0,T))$ such that for $x\in \QQ^{*}$ and $0<t<T$,
		\begin{equation}\label{localpress_intro}
		\begin{split}
		p(x,t)-p_\QQ(t)
			 &=
     			 -|u(x,t)|^2 
      			+\text{p.v.}\int_{y\in \QQ^{**}}K_{ij}(x-y)(u_i(y,t)u_j(y,t))\d y
      			\\&\indeq
      			+\int_{y\notin \QQ^{**}}\left(K_{ij}(x-y)-K_{ij}(x_\QQ-y)\right)(u_i(y,t)u_j(y,t))\d y,
			\end{split}
		\end{equation}
where $x_\QQ$ is the center of $\QQ$ and $K_{ij}(y)=\partial_i\partial_j(4\pi|y|)^{-1}$. 
	\end{enumerate}
\end{definition}
Note that the notion of a solution in Definition~\ref{locenergy} is different from the usual definition of a local energy solution or a local Leray solution, as we assume neither that $\nabla u \in L^2 L^2_{\uloc}$, nor that $u_0\in L^2_{\uloc}$. Local energy solutions with these additional assumptions are henceforth referred to as local Leray solutions. As noted in \cite{MMP,BK1}, in addition to being a more general class of solutions, this definition retains enough structure to facilitate useful developments in several theoretical problems of interest. Here, we briefly mention a few.

Local energy solutions are suitable in the sense of Caffarelli–Kohn–Nirenberg, and one may therefore apply their partial regularity results~\cite{CKN}. Additionally, local Leray solutions appear as limits of rescaled solutions to the 3D NSE near potential singularities. The energy, being supercritical with respect to the scaling $u_\lambda(x,t)=\lambda u(\lambda x,\lambda^2 t)$, blows up. The limit quantity still solves the 3D NSE in the local Leray class. Lastly, local Leray solutions were key to the work of Jia and \v Sver\'ak \cite{JS}  regarding the construction of forward self-similar solutions with large initial data. This seminal work, along with \cite{GSv,JS2} played an important role in the understanding of non-uniqueness of Leray-Hopf solutions.

In this paper, we establish local-in-time existence of suitable weak solutions for initial data in a general set of subspaces of~$L^2_{\loc}$. To be precise we have the following.
\begin{definition}\label{Cr}
Given arbitrary constants $\sigma, \eta>0$, 
denote by $\mathscr{C}=\mathscr{C}^{\sigma,\eta }$ a family of closed balls $\QQ$ such that $\bigcup_{\QQ\in \mathscr{C} } \QQ =\R^3$, $|\QQ|\geq 4\pi /3$ and:
\begin{enumerate}
\item[(i)] each ball in $\mathscr{C}$ intersects at most $\sigma$ balls in $\mathscr{C}$, 
\item[(ii)] if $\QQ, \QQ'\in \mathscr{C}$ intersect, then
\[
\frac1\eta \leq \frac{|\QQ|^{\frac13}}{|\QQ'|^{\frac13}} \leq \eta .\]
\end{enumerate} 
\end{definition}
The above definition allows various covers of $\R^3$ by balls, each of them satisfying the asymptotic bound
\eqnb\label{asympt}
|B|^{1/3} \lec |x_B| \qquad \text{ as }|x_B|\to \infty
;
\eqne
see~Corollary~\ref{cor_forintro} for a proof. 
\begin{definition}[weighted space $M^{p,q}$]
Let $\mathscr{C}$ be a collection of balls as in Definition~\ref{Cr}.  We say that $f\in M^{p,q}=M^{p,q}_{\mathscr{C}}$ if
     \begin{equation}\label{mpq}
       \|f\|_{M^{p,q}_{\mathscr{C}}}^p
       \coloneqq \sup_{\QQ\in \mathscr{C}}\frac{1}{|\QQ|^{q/3}}\int_{\QQ}|f|^p
       <\infty
     .
     \end{equation}
     We also set
     \[
     M\coloneqq M^{2,2}_{\mathscr{C}}.
     \]
\end{definition}
We will use the short-hand notation
\eqnb\label{alphabeta}
\alpha_t [ v ] \coloneqq \displaystyle\esssup_{0<s<t}\| v (s)\|_{M}^2,\qquad \beta_t [v] \coloneqq \sup_{Q\in\sc}\frac{1}{|Q|^{2/3}}\int_0^t\int_Q|\nabla v|^2
\eqne
for any vector field~$v$, and 
\eqnb\label{alphabetas}
\alpha_t \coloneqq \alpha_t [ u ],\qquad \beta_t \coloneqq \beta_t [u] .
\eqne
We prove an a~priori estimate for all solutions to \eqref{nse} for which $\alpha_t+\beta_t$ is continuous.

\begin{theorem}[A~priori estimate in $M$]\label{thm_apriori}
Let $u_0\in M$ be divergence-free, and suppose that $(u,p)$ is a local energy solution with initial data $u_0$, where 
	\[
		T
		= \frac{1}{C}\min
		\left\lbrace 1,\|u_0\|_{M}^{-4}\right\rbrace
	\]
for a sufficiently large constant~$C=C(\sigma,\eta)\geq 1$. Suppose that $\alpha_t$ and $\beta_t$ are continuous on $[0,T]$. Then,
    \eqnb\label{main_apr}
    	\esssup_{0<t<T} \|u(t)\|_{M}^2+\sup_{\QQ\in\sc}\frac{1}{\QQ^{2/3}}\int_0^T\int_{\QQ}|\nabla u|^2
	\lec_{\sigma,\eta } \|u_0\|_{M}.
    \eqne
\end{theorem}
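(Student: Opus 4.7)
The plan is to adapt the Lemari\'e--Rieusset local energy scheme to the weighted Morrey-type space $M$, closing the estimate by a bootstrap that relies on the continuity of $\alpha_t+\beta_t$. For each $B\in\sc$, I would fix a smooth cutoff $\phi_B$, supported in a slight enlargement $\tilde B\Subset B^{**}$, with $\phi_B\equiv 1$ on~$B$ and
\[
|\partial_t\phi_B|+|\Delta\phi_B|\lec |B|^{-2/3},\qquad |\nabla\phi_B|\lec|B|^{-1/3}.
\]
Substituting $\phi_B$ into the local energy inequality \eqref{localenergy} and replacing $p$ by $p-p_B(t)$ (the $x$-independent correction is harmless since $\div u=0$), then dividing by $|B|^{2/3}$, gives
\[
\frac{1}{|B|^{2/3}}\int_B |u(t)|^2+\frac{1}{|B|^{2/3}}\int_0^t\int_B|\nabla u|^2 \lec \|u_0\|_M^2 + I_{\mathrm{lo}} + I_{\mathrm{cu}}+I_{\mathrm{pr}},
\]
with $I_{\mathrm{lo}}$ coming from $|\partial_t\phi_B|+|\Delta\phi_B|$, $I_{\mathrm{cu}}=|B|^{-1}\int_0^t\int_{\tilde B}|u|^3$, and $I_{\mathrm{pr}}=|B|^{-1}\int_0^t\int_{\tilde B}|u|\,|p-p_B|$.

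The lower-order term $I_{\mathrm{lo}}$ is bounded by $t\alpha_t$ uniformly in~$B$. For $I_{\mathrm{cu}}$, I would use Gagliardo--Nirenberg on $\tilde B$ to estimate
\[
\frac{1}{|B|^{2/3}}\int_{\tilde B}|u|^3 \lec \alpha_t^{3/4}\Big(\|\nabla u\|_{L^2(\tilde B)}^2+|B|^{-2/3}\|u\|_{L^2(\tilde B)}^2\Big)^{3/4},
\]
which, after integrating in time, taking the supremum over $B\in\sc$, and using property~(i) to handle the finite overlap of balls, yields a contribution of the form $t^{1/4}\alpha_t^{3/4}(\alpha_t+\beta_t)^{3/4}$.

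The main obstacle is the pressure term $I_{\mathrm{pr}}$. Using the decomposition \eqref{localpress_intro}, I would split $p-p_B=p^{\mathrm{loc}}+p^{\mathrm{far}}$. The local part $p^{\mathrm{loc}}$, being $-|u|^2$ plus a Calder\'on--Zygmund operator applied to $u_iu_j\mathbf{1}_{B^{**}}$, is controlled in $L^{3/2}(\tilde B)$ by $\|u\|_{L^3(B^{**})}^2$, hence contributes a term of the same type as $I_{\mathrm{cu}}$; property~(i) bounds the number of $\sc$-balls needed to cover $B^{**}$. For $p^{\mathrm{far}}$, the kernel-difference bound $|K_{ij}(x-y)-K_{ij}(x_B-y)|\lec |B|^{1/3}|x_B-y|^{-4}$ for $x\in \tilde B$, $y\notin B^{**}$, combined with a dyadic decomposition of $\R^3\setminus B^{**}$ into annuli $A_k$ at scales $\sim 2^k |B|^{1/3}$, yields
\[
|p^{\mathrm{far}}(x,t)|\lec \sum_{k\geq 0} 2^{-k}\sup_{B'\in\sc,\; B'\cap A_k\neq\emptyset}\frac{1}{|B'|^{2/3}}\int_{B'}|u(t)|^2 \lec \alpha_t,
\]
uniformly in $x\in\tilde B$. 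Here property~(ii) of $\sc$ is crucial to match the $|B|$-weight with the $|B'|$-weights inside the Morrey norm, while property~(i) controls the number of balls $B'$ intersecting each annulus. Thus $I_{\mathrm{pr}}\lec t\,\alpha_t^{3/2}$ plus cubic-type contributions already treated in $I_{\mathrm{cu}}$.

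Taking the supremum over $B\in\sc$ and over $t\in[0,T]$, I would arrive at a nonlinear inequality of the schematic form
\[
\alpha_t+\beta_t \le C\|u_0\|_M^2 + C(t+t^{1/4})\bigl(1+\alpha_t+\beta_t\bigr)^{3}
\]
for $t\in[0,T]$, with $C=C(\sigma,\eta)$. Since $\alpha_t+\beta_t$ is continuous on $[0,T]$ and $\alpha_0\le \|u_0\|_M^2$, a standard continuity/bootstrap argument shows that the choice $T\leq c\min\{1,\|u_0\|_M^{-4}\}$ with $c=c(\sigma,\eta)$ small enough absorbs the nonlinear terms into the left-hand side, yielding \eqref{main_apr}. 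The hardest step is the far-field pressure estimate, where the Morrey-type control must be propagated uniformly across scales using (i) and (ii); the remaining ingredients are essentially a routine adaptation of the Lemari\'e--Rieusset argument.
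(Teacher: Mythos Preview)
Your overall scheme matches the paper's: test the local energy inequality against a cutoff $\phi_B$, split the pressure via \eqref{localpress_intro} into near and far parts, control the near part by Calder\'on--Zygmund and the cubic term by Gagliardo--Nirenberg, and close with a barrier argument using continuity of $\alpha_t+\beta_t$. The paper arrives at the sharper nonlinear inequality $\alpha_t+\beta_t\lec\|u_0\|_M^2+t(\alpha_t+\beta_t)+t^{1/4}(\alpha_t+\beta_t)^{3/2}$; note that your schematic cubic power would force $T\lec\|u_0\|_M^{-16}$ rather than $\|u_0\|_M^{-4}$, so you do need the $3/2$ exponent, which is what the estimates actually give.

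There is, however, a genuine gap in your far-pressure step. You write that ``property~(i) controls the number of balls $B'$ intersecting each annulus,'' but this is false: property~(i) only bounds the overlap multiplicity, not the cardinality. In the $L^2_{\mathrm{uloc}}$ case, for instance, the annulus $A_k$ at scale $2^k|B|^{1/3}$ meets $O(2^{3k}|B|)$ unit balls. What you actually need in order to make your dyadic argument go through is the separation property (Lemma~\ref{lem_sepa} and its consequence \eqref{EQ01a} in the paper): for any $B'\not\subset B^{(3)}$ one has $r_{B'}\lec_\eta|x_B-x_{B'}|$, so balls meeting $A_k$ lie in an annulus of comparable volume; finite overlap then gives $\sum_{B'\cap A_k\ne\emptyset}|B'|\lec_{\sigma,\eta}2^{3k}|B|$, and since $|B'|\ge 4\pi/3$ one has $|B'|^{2/3}\le|B'|$, yielding $\sum|B'|^{2/3}\lec 2^{3k}|B|$ and hence your claimed $2^{-k}\alpha_t$ contribution. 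Property~(ii) alone does not provide this, since it only compares \emph{intersecting} balls.

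The paper handles this same point differently: rather than a dyadic decomposition in annuli from $\sc$, it introduces an auxiliary fine mesh $\mathscr{D}$ of cubes of side $\sim 1/100$ centered at $x_B$, counts the cubes in each layer $A_n$ (giving $O((|B|^{1/3}+n)^2)$ cubes at distance $\sim|B|^{1/3}+n$), and transfers the sum over $B'\in\sc$ to a sum over $R\in\mathscr{D}$ via an explicit injection; see \eqref{cubetrans}. This avoids any direct control on the sizes or number of $\sc$-balls in an annulus. Your dyadic route is viable once patched with the separation lemma, and is closer in spirit to the classical $L^2_{\mathrm{uloc}}$ argument; the paper's fine-mesh route trades that geometric input for an elementary counting argument on a uniform grid.
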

In order to establish local-in-time existence of solutions satisfying the a~priori bound \eqref{main_apr}, we make a technical restriction on the family $\mathscr{C}$:  either the growth at infinity stated in \eqref{asympt} holds \emph{exactly} or we have a \emph{strictly slower} growth. Namely, we assume that either
\eqnb\label{ass2}
|B|^{1/3} \gec 1+ |x_B| \quad \text{ for all }B\in \sc
\eqne
or  there exists $\epsilon \in (0,1)$ such that 
\eqnb\label{ass_extra}
|B|^{1/3} \lec |x_B|^{1-\epsilon }\qquad \text{ for all }B\in \sc .
\eqne

With this, we now state our main result.

\begin{theorem}[Local existence in $M$]\label{thm_main}
Assume $\sc$ satisfies \eqref{ass2} or \eqref{ass_extra}, let $u_0\in M$ be divergence-free, and let $T=C^{-1}\min\{1,\|u_0\|_{M}^{-4}\}$, where $C=C(\sigma , \eta )\geq 1$ is a sufficiently large constant. Then there exist a local energy solution $(u,p)$  to the Navier–Stokes equations \eqref{nse} satisfying~\eqref{main_apr}.
\end{theorem}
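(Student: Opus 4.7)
\medskip

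\noindent\textbf{Proof proposal.} The plan is to construct $(u,p)$ as the limit of a sequence of smooth global approximate solutions, pass to the limit using the a~priori estimate of Theorem~\ref{thm_apriori}, and verify each of the items (i)--(v) of Definition~\ref{locenergy} by hand. The "new non-divergence-free approach" advertised in the abstract suggests that the approximation scheme will \emph{not} enforce $\div u^{(n)}=0$ exactly; rather, one introduces a regularization of the form
\begin{equation*}
\partial_t u^{(n)} + (u^{(n)}\cdot\nabla ) u^{(n)} -\Delta u^{(n)} +\nabla p^{(n)} =0,\qquad -\epsilon_n \Delta p^{(n)} + \div u^{(n)} = 0,
\end{equation*}
(or a similar parabolic regularization of the continuity equation) on $\R^3$, together with a cut-off/mollified initial datum $u_0^{(n)}\in L^2\cap M$ with $u_0^{(n)}\to u_0$ in $L^2_{\loc}$ and $\|u_0^{(n)}\|_M\lec \|u_0\|_M$. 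Such a system is semilinear enough in $p$ to solve globally in time and space for each fixed $n$, while $\div u^{(n)}\to 0$ as $n\to\infty$.

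Next I would run the a~priori argument of Theorem~\ref{thm_apriori} on each $u^{(n)}$. Since each approximation is smooth and decaying, the functions $\alpha_t[u^{(n)}]$ and $\beta_t[u^{(n)}]$ are continuous, so Theorem~\ref{thm_apriori} applies and yields
\begin{equation*}
\esssup_{0<t<T} \|u^{(n)}(t)\|_M^2 + \sup_{\QQ\in\sc}\frac{1}{|\QQ|^{2/3}}\int_0^T\!\!\int_\QQ |\nabla u^{(n)}|^2 \lec \|u_0\|_M,
\end{equation*}
with the same $T=C^{-1}\min\{1,\|u_0\|_M^{-4}\}$, uniformly in $n$. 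Combined with an equation-based bound on $\partial_t u^{(n)}$ in a negative local Sobolev norm, Aubin--Lions produces a subsequence converging strongly in $L^2_{\loc}(\R^3\times(0,T))$ to some $u$, with $\nabla u^{(n)}\rightharpoonup \nabla u$ weakly in $L^2_{\loc}$. Lower semicontinuity of the $M$- and $\beta_t$-norms preserves \eqref{main_apr} in the limit, and strong local convergence lets us pass to the limit in the nonlinearity to obtain that $(u,p)$ solves \eqref{nse} distributionally with $\div u=0$.

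The verification of items (iv)--(v) is where the dichotomy \eqref{ass2} versus \eqref{ass_extra} enters. For each $\QQ\in\sc$, one chooses the enlargements $\QQ\Subset\QQ^*\Subset\QQ^{**}$ in a manner adapted to $|\QQ|$, defines
\begin{equation*}
p^{(n)}(x,t)-p^{(n)}_\QQ(t) = -|u^{(n)}|^2 + \mathrm{p.v.}\!\int_{\QQ^{**}} K_{ij}(x-y) u^{(n)}_i u^{(n)}_j + \int_{(\QQ^{**})^c}\!\bigl(K_{ij}(x-y)-K_{ij}(x_\QQ-y)\bigr)u^{(n)}_i u^{(n)}_j,
\end{equation*}
and checks that both integrals converge uniformly in~$n$: the near-field piece by Calderón--Zygmund on $\QQ^{**}$, the far-field piece by exploiting the extra $|x-x_\QQ|/|y-x_\QQ|$ factor obtained from the kernel difference together with the Morrey-type estimate $\sum_{\QQ'\in\sc}|\QQ'|^{2/3}\langle x_{\QQ'}\rangle^{-4}\lec 1$. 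It is exactly here that either \eqref{ass2} (balls large enough to soak up the weight) or the $\epsilon$-gain in \eqref{ass_extra} (balls small enough that the geometric series over $\sc$ converges) is needed. After taking $n\to\infty$, the strong $L^2_{\loc}$ convergence of $u^{(n)}$ gives convergence of $p^{(n)}-p^{(n)}_\QQ$ in $L^{3/2}_{\loc}$ and one recovers \eqref{localpress_intro}. The local energy inequality \eqref{localenergy} is obtained by multiplying the approximate equation by $u^{(n)}\phi$, integrating, and passing to the limit: the dissipation term is handled by weak lower semicontinuity, and the transport/pressure terms by strong $L^3_{\loc}$ convergence of $u^{(n)}$ (Gagliardo--Nirenberg interpolation between $L^\infty_t L^2_{\loc}$ and $L^2_t H^1_{\loc}$) together with the local pressure decomposition above.

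The hard part will be the far-field pressure control: showing that the non-local integral over $\sc$ converges uniformly in~$n$, that $t\mapsto p^{(n)}_\QQ(t)$ admits a limit in $L^{3/2}(0,T)$, and that the two dichotomous growth conditions on $\sc$ exactly account for the two ways this convergence can be arranged. Closely related, one must verify the weak-$*$ continuity condition (iii) at $t=0$ despite the fact that $u_0^{(n)}$ is only an approximate divergence-free datum; this is where the non-divergence-free formulation pays off, since the approximation to $u_0$ can be chosen by a simple cutoff rather than requiring a Bogovskii-type correction that would be costly to control in the $M$-norm.
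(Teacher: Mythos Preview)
Your proposal has two genuine gaps that would stop the argument.

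\textbf{(1) Theorem~\ref{thm_apriori} does not apply to non-divergence-free approximations.} You plan to run Theorem~\ref{thm_apriori} on each $u^{(n)}$, but that theorem is stated for \emph{local energy solutions} in the sense of Definition~\ref{locenergy}, which requires $\div u=0$, the local energy inequality \eqref{localenergy} in its specific form, and the pressure decomposition~\eqref{localpress_intro}. Once $\div u^{(n)}\ne 0$, the energy identity acquires extra terms (e.g.\ $\int |\widetilde u|^2(\div v)\phi$ and $\int (\widetilde p-\widetilde p_B)(\div \widetilde u)\phi$; cf.\ the paper's \eqref{app1}), and the pressure equation picks up a lower-order piece $-\p_i(u_i\,\div v)$ whose representation involves the \emph{slowly decaying} kernel $L_j(y)=c\,y_j|y|^{-3}$ rather than $K_{ij}$. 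So you would need an entirely new a~priori estimate at the approximate level, and that is exactly what the paper spends Sections~5.2--5.4 doing. In particular, the paper does \emph{not} use the nonlinear compressible-type regularization you propose; it uses a \emph{linear} advection system \eqref{v_system} with a prescribed compactly supported $v$, solves it easily in $L^2$, and then recovers the nonlinearity via a retarded-mollification iteration on subintervals of length $T/K$, carefully tracking constants so that they do not grow with~$K$.

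\textbf{(2) You have misplaced where the dichotomy \eqref{ass2}/\eqref{ass_extra} enters.} The far-field pressure sum $\sum_{B'\notin P^{(7)}}|B'|^{2/3}|x_B-x_{B'}|^{-4}\lec |B|^{-1/3}$ converges for \emph{every} cover satisfying Definition~\ref{Cr}; see \eqref{EQ04}. Neither \eqref{ass2} nor \eqref{ass_extra} is needed there. What actually happens is: under \eqref{ass2}, only $O_{\sigma,\eta,\kappa}(1)$ balls of $\sc$ meet any annulus $B(2n)\setminus B(n)$ (Lemma~\ref{lem_finmany}), so one can apply a Bogovski\u{\i} correction to $u_0 Z_n$ and obtain divergence-free $L^2$ data with $M$-norm controlled by $\|u_0\|_M$; then classical Leray solutions plus Theorem~\ref{thm_apriori} suffice. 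Under \eqref{ass_extra}, no such finiteness holds (think of $L^2_{\uloc}$), and the role of the $\epsilon$ is instead to make the \emph{new} pressure pieces $I_3,I_4$ in~\eqref{EQ08}---those coming from the $L_j$ kernel and $\div v$---vanish like $n^{-c\epsilon}$ as $n\to\infty$ (see \eqref{i3est}, \eqref{i4est}), and likewise to kill the $\div v$ term in the local energy estimate (see \eqref{div_term_apr}). Without \eqref{ass_extra} these terms are only logarithmically bounded and the $M$-estimate for the approximations does not close.
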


We emphasize that we do not assume \emph{any decay} of the initial data as $|x|\to \infty$. Moreover, our Definition~\ref{Cr} does not assume \emph{any structure} of the cover, such as uniform or dyadic tiling. In this sense, Theorem~\ref{thm_main} is a broad generalization of the local existence result in $L^2_{\uloc}$, due to Lemari\'e-Rieuseet~\cite{LR}. In fact, Definition~\ref{Cr} also allows the dyadic tiling, as considered by \cite{BK1} (see also Bradshaw, Kukavica, and O\.za\'nski \cite{BKO} in the case of half-space $\R^3_+$). However, the additional assumption \eqref{ass_extra} excludes the borderline case of~\cite{BK1}. The assumption  \eqref{ass_extra} is due to our new method of construction, which uses pointwise approximation of $u_0$ using \emph{non-divergence-free} cutoffs (which we describe in Section~\ref{sec_sketch} below); the assumption \eqref{ass_extra} appears necessary to control the pressure function in such an approximation. To be precise, the  pressure decomposition \eqref{localpressapprox} of the approximations necessarily results in a borderline-divergent term. Namely, $I_4$ in~\eqref{localpressapprox} gives rise to logarithmically diverging summation in~\eqref{EQx}.

\subsection{Examples of configurations covered by Definition~\ref{Cr}}

As we vary $\mathscr{C}$, we obtain a variety of subspaces of~$L^2_{\loc}$. In this section, we point out a few examples of $\mathscr{C}$ that satisfy the conditions of Definition~\ref{Cr} and describe the resulting subspace.\\

\begin{enumerate}
\item[(i)] As mentioned above, Lemari\'e-Rieusset showed in~\cite{LR} the existence of suitable weak solutions in~$L^2_{\uloc}(\R^3)$. Choosing $\mathscr{C}$ appropriately, $M$ becomes equivalent to~$L^2_{\uloc}(\R^3)$. The appropriate $\mathscr{C}$ can be obtained by first covering $\R^3$ by unit cubes, and then replacing each unit cube with a ball circumscribing the unit cube. Such a collection of balls satisfies Definition~\ref{Cr} with $\eta=1$ and $\sigma=26$. Since all balls are of the same size, the $M$-norm is equivalent to the $L^2_{\uloc}(\R^3)$-norm.
	
\item[(ii)] In \cite{BK1}, the authors show existence of suitable weak solutions with initial data $u_0 \in {M}$ with decay at spatial infinity,
  \begin{equation}\label{decay}
    \frac{1}{|\QQ|^{2/3}}\int_{\QQ}|u_0 |^2 \to 0 \quad \text{~as~}x_\QQ\to\infty,  \QQ\in \mathscr{C},
  \end{equation} 
and in the particular case of  cubes doubling in size as we move away from the origin. This configuration is also covered by Definition~\ref{Cr} with $\eta=2$ and $\sigma=4$. Again, one can replace cubes with balls circumscribing the cube.
	
\item[(iii)]
In \cite{BC}, the authors consider a family of intermediate spaces $F^p_{\alpha,\lambda}$ with $1\leq p<\infty$ and $0\leq\alpha\leq 1$. This space consists of functions $f\in L^p_{\loc}(\R^3)$ with finite norm $\|f\|_{F^p_{\alpha,\lambda}}$, where for any $R>1$,
	\begin{equation}\label{F}
		\|f\|_{F^p_{\alpha,\lambda}}=\sup_{x\in\R^3}\left(\frac{1}{r_x^\lambda}\int_{B(x,r_x)}|f|^p\right)^{1/p}\comma r_x=\frac{1}{8}\max\{R,|x|\}^\alpha.
	\end{equation}
	Note that when $\alpha=0$, we get $L^2_{\uloc}(\R^3)$, and when $\alpha=1$, we obtain $M^{p,\lambda}_{\mathscr{C}}$ for any $\eta>1$. Both spaces are covered by Definition~\ref{Cr}. For $0<\alpha<1$, observe that the supremum is taken over balls whose radii grow at a rate of $\alpha<1$. From Definition~\ref{Cr}(ii), we may conclude that since the case of $\alpha=1$ is covered by Definition~\ref{Cr} for any $\eta>1$, so is the case with slower radial growth rate. 
	
	\item[(iv)] One may also consider covers of $\R^3$ where the balls grow in size only in certain directions. This would mean that the norm is measured over larger scales in those directions. An example of such a cover would be one where the balls double in size as we move along the $z$ axis and away from the origin, as shown in the figure below. Such covers have not been considered in the past and are well-adapted for axi-symmetric solutions. 
	
	\begin{center}

\tikzset{every picture/.style={line width=0.75pt}} 

\begin{tikzpicture}[x=0.75pt,y=0.75pt,yscale=-0.7,xscale=0.7]

\draw    (353,224) -- (620,225.49) ;
\draw [shift={(622,225.5)}, rotate = 180.32] [color={rgb, 255:red, 0; green, 0; blue, 0 }  ][line width=0.75]    (10.93,-3.29) .. controls (6.95,-1.4) and (3.31,-0.3) .. (0,0) .. controls (3.31,0.3) and (6.95,1.4) .. (10.93,3.29)   ;
\draw    (354,225) -- (353.01,5.5) ;
\draw [shift={(353,3.5)}, rotate = 89.74] [color={rgb, 255:red, 0; green, 0; blue, 0 }  ][line width=0.75]    (10.93,-3.29) .. controls (6.95,-1.4) and (3.31,-0.3) .. (0,0) .. controls (3.31,0.3) and (6.95,1.4) .. (10.93,3.29)   ;
\draw    (353,224) -- (81,224.5) ;
\draw [shift={(79,224.5)}, rotate = 359.9] [color={rgb, 255:red, 0; green, 0; blue, 0 }  ][line width=0.75]    (10.93,-3.29) .. controls (6.95,-1.4) and (3.31,-0.3) .. (0,0) .. controls (3.31,0.3) and (6.95,1.4) .. (10.93,3.29)   ;
\draw   (155,225) .. controls (155,211.19) and (166.19,200) .. (180,200) .. controls (193.81,200) and (205,211.19) .. (205,225) .. controls (205,238.81) and (193.81,250) .. (180,250) .. controls (166.19,250) and (155,238.81) .. (155,225) -- cycle ;
\draw   (205,225) .. controls (205,211.19) and (216.19,200) .. (230,200) .. controls (243.81,200) and (255,211.19) .. (255,225) .. controls (255,238.81) and (243.81,250) .. (230,250) .. controls (216.19,250) and (205,238.81) .. (205,225) -- cycle ;
\draw   (254,225) .. controls (254,211.19) and (265.19,200) .. (279,200) .. controls (292.81,200) and (304,211.19) .. (304,225) .. controls (304,238.81) and (292.81,250) .. (279,250) .. controls (265.19,250) and (254,238.81) .. (254,225) -- cycle ;
\draw   (304,225) .. controls (304,211.19) and (315.19,200) .. (329,200) .. controls (342.81,200) and (354,211.19) .. (354,225) .. controls (354,238.81) and (342.81,250) .. (329,250) .. controls (315.19,250) and (304,238.81) .. (304,225) -- cycle ;
\draw   (354,225) .. controls (354,211.19) and (365.19,200) .. (379,200) .. controls (392.81,200) and (404,211.19) .. (404,225) .. controls (404,238.81) and (392.81,250) .. (379,250) .. controls (365.19,250) and (354,238.81) .. (354,225) -- cycle ;
\draw   (404,225) .. controls (404,211.19) and (415.19,200) .. (429,200) .. controls (442.81,200) and (454,211.19) .. (454,225) .. controls (454,238.81) and (442.81,250) .. (429,250) .. controls (415.19,250) and (404,238.81) .. (404,225) -- cycle ;
\draw   (453,225) .. controls (453,211.19) and (464.19,200) .. (478,200) .. controls (491.81,200) and (503,211.19) .. (503,225) .. controls (503,238.81) and (491.81,250) .. (478,250) .. controls (464.19,250) and (453,238.81) .. (453,225) -- cycle ;
\draw   (503,225) .. controls (503,211.19) and (514.19,200) .. (528,200) .. controls (541.81,200) and (553,211.19) .. (553,225) .. controls (553,238.81) and (541.81,250) .. (528,250) .. controls (514.19,250) and (503,238.81) .. (503,225) -- cycle ;
\draw   (167,187) .. controls (167,166.01) and (184.01,149) .. (205,149) .. controls (225.99,149) and (243,166.01) .. (243,187) .. controls (243,207.99) and (225.99,225) .. (205,225) .. controls (184.01,225) and (167,207.99) .. (167,187) -- cycle ;
\draw   (217,187) .. controls (217,166.01) and (234.01,149) .. (255,149) .. controls (275.99,149) and (293,166.01) .. (293,187) .. controls (293,207.99) and (275.99,225) .. (255,225) .. controls (234.01,225) and (217,207.99) .. (217,187) -- cycle ;
\draw   (266,187) .. controls (266,166.01) and (283.01,149) .. (304,149) .. controls (324.99,149) and (342,166.01) .. (342,187) .. controls (342,207.99) and (324.99,225) .. (304,225) .. controls (283.01,225) and (266,207.99) .. (266,187) -- cycle ;
\draw   (315,186) .. controls (315,165.01) and (332.01,148) .. (353,148) .. controls (373.99,148) and (391,165.01) .. (391,186) .. controls (391,206.99) and (373.99,224) .. (353,224) .. controls (332.01,224) and (315,206.99) .. (315,186) -- cycle ;
\draw   (366,187) .. controls (366,166.01) and (383.01,149) .. (404,149) .. controls (424.99,149) and (442,166.01) .. (442,187) .. controls (442,207.99) and (424.99,225) .. (404,225) .. controls (383.01,225) and (366,207.99) .. (366,187) -- cycle ;
\draw   (416,187) .. controls (416,166.01) and (433.01,149) .. (454,149) .. controls (474.99,149) and (492,166.01) .. (492,187) .. controls (492,207.99) and (474.99,225) .. (454,225) .. controls (433.01,225) and (416,207.99) .. (416,187) -- cycle ;
\draw   (465,187) .. controls (465,166.01) and (482.01,149) .. (503,149) .. controls (523.99,149) and (541,166.01) .. (541,187) .. controls (541,207.99) and (523.99,225) .. (503,225) .. controls (482.01,225) and (465,207.99) .. (465,187) -- cycle ;
\draw   (117,187) .. controls (117,166.01) and (134.01,149) .. (155,149) .. controls (175.99,149) and (193,166.01) .. (193,187) .. controls (193,207.99) and (175.99,225) .. (155,225) .. controls (134.01,225) and (117,207.99) .. (117,187) -- cycle ;
\draw   (515,187) .. controls (515,166.01) and (532.01,149) .. (553,149) .. controls (573.99,149) and (591,166.01) .. (591,187) .. controls (591,207.99) and (573.99,225) .. (553,225) .. controls (532.01,225) and (515,207.99) .. (515,187) -- cycle ;
\draw   (101,96) .. controls (101,52.37) and (136.37,17) .. (180,17) .. controls (223.63,17) and (259,52.37) .. (259,96) .. controls (259,139.63) and (223.63,175) .. (180,175) .. controls (136.37,175) and (101,139.63) .. (101,96) -- cycle ;
\draw   (201,97) .. controls (201,53.37) and (236.37,18) .. (280,18) .. controls (323.63,18) and (359,53.37) .. (359,97) .. controls (359,140.63) and (323.63,176) .. (280,176) .. controls (236.37,176) and (201,140.63) .. (201,97) -- cycle ;
\draw   (300,97) .. controls (300,53.37) and (335.37,18) .. (379,18) .. controls (422.63,18) and (458,53.37) .. (458,97) .. controls (458,140.63) and (422.63,176) .. (379,176) .. controls (335.37,176) and (300,140.63) .. (300,97) -- cycle ;
\draw   (400,98) .. controls (400,54.37) and (435.37,19) .. (479,19) .. controls (522.63,19) and (558,54.37) .. (558,98) .. controls (558,141.63) and (522.63,177) .. (479,177) .. controls (435.37,177) and (400,141.63) .. (400,98) -- cycle ;
\draw    (353.25,226.07) -- (355.8,416.55) ;
\draw [shift={(355.83,418.55)}, rotate = 269.23] [color={rgb, 255:red, 0; green, 0; blue, 0 }  ][line width=0.75]    (10.93,-3.29) .. controls (6.95,-1.4) and (3.31,-0.3) .. (0,0) .. controls (3.31,0.3) and (6.95,1.4) .. (10.93,3.29)   ;
\draw   (540.56,262.53) .. controls (540.73,283.52) and (523.85,300.67) .. (502.87,300.84) .. controls (481.88,301.02) and (464.73,284.14) .. (464.56,263.16) .. controls (464.39,242.17) and (481.26,225.02) .. (502.24,224.85) .. controls (523.23,224.67) and (540.38,241.55) .. (540.56,262.53) -- cycle ;
\draw   (490.56,262.94) .. controls (490.73,283.93) and (473.86,301.08) .. (452.87,301.25) .. controls (431.88,301.43) and (414.73,284.55) .. (414.56,263.57) .. controls (414.39,242.58) and (431.26,225.43) .. (452.25,225.26) .. controls (473.23,225.08) and (490.38,241.96) .. (490.56,262.94) -- cycle ;
\draw   (441.56,263.35) .. controls (441.73,284.33) and (424.86,301.48) .. (403.87,301.66) .. controls (382.89,301.83) and (365.73,284.96) .. (365.56,263.97) .. controls (365.39,242.98) and (382.26,225.83) .. (403.25,225.66) .. controls (424.23,225.49) and (441.39,242.36) .. (441.56,263.35) -- cycle ;
\draw   (392.57,264.75) .. controls (392.74,285.73) and (375.87,302.89) .. (354.88,303.06) .. controls (333.9,303.23) and (316.74,286.36) .. (316.57,265.37) .. controls (316.4,244.38) and (333.27,227.23) .. (354.26,227.06) .. controls (375.24,226.89) and (392.4,243.76) .. (392.57,264.75) -- cycle ;
\draw   (341.56,264.17) .. controls (341.73,285.15) and (324.86,302.3) .. (303.87,302.48) .. controls (282.89,302.65) and (265.74,285.78) .. (265.56,264.79) .. controls (265.39,243.8) and (282.27,226.65) .. (303.25,226.48) .. controls (324.24,226.31) and (341.39,243.18) .. (341.56,264.17) -- cycle ;
\draw   (291.56,264.58) .. controls (291.74,285.56) and (274.86,302.71) .. (253.88,302.89) .. controls (232.89,303.06) and (215.74,286.19) .. (215.57,265.2) .. controls (215.39,244.21) and (232.27,227.06) .. (253.25,226.89) .. controls (274.24,226.72) and (291.39,243.59) .. (291.56,264.58) -- cycle ;
\draw   (242.57,264.98) .. controls (242.74,285.96) and (225.86,303.12) .. (204.88,303.29) .. controls (183.89,303.46) and (166.74,286.59) .. (166.57,265.6) .. controls (166.4,244.62) and (183.27,227.46) .. (204.25,227.29) .. controls (225.24,227.12) and (242.39,243.99) .. (242.57,264.98) -- cycle ;
\draw   (590.55,262.12) .. controls (590.73,283.11) and (573.85,300.26) .. (552.87,300.43) .. controls (531.88,300.61) and (514.73,283.73) .. (514.56,262.75) .. controls (514.38,241.76) and (531.26,224.61) .. (552.24,224.44) .. controls (573.23,224.26) and (590.38,241.14) .. (590.55,262.12) -- cycle ;
\draw   (192.57,265.39) .. controls (192.74,286.37) and (175.87,303.53) .. (154.88,303.7) .. controls (133.89,303.87) and (116.74,287) .. (116.57,266.01) .. controls (116.4,245.03) and (133.27,227.87) .. (154.26,227.7) .. controls (175.24,227.53) and (192.39,244.4) .. (192.57,265.39) -- cycle ;
\draw   (607.3,352.99) .. controls (607.66,396.62) and (572.58,432.28) .. (528.95,432.63) .. controls (485.32,432.99) and (449.66,397.91) .. (449.3,354.29) .. controls (448.95,310.66) and (484.02,275) .. (527.65,274.64) .. controls (571.28,274.28) and (606.94,309.36) .. (607.3,352.99) -- cycle ;
\draw   (507.29,352.81) .. controls (507.65,396.44) and (472.57,432.1) .. (428.95,432.45) .. controls (385.32,432.81) and (349.66,397.73) .. (349.3,354.11) .. controls (348.94,310.48) and (384.02,274.82) .. (427.65,274.46) .. controls (471.28,274.1) and (506.94,309.18) .. (507.29,352.81) -- cycle ;
\draw   (408.3,353.62) .. controls (408.66,397.25) and (373.58,432.91) .. (329.95,433.27) .. controls (286.32,433.62) and (250.66,398.55) .. (250.3,354.92) .. controls (249.95,311.29) and (285.02,275.63) .. (328.65,275.27) .. controls (372.28,274.91) and (407.94,309.99) .. (408.3,353.62) -- cycle ;
\draw   (308.29,353.44) .. controls (308.65,397.07) and (273.57,432.73) .. (229.94,433.09) .. controls (186.31,433.45) and (150.66,398.37) .. (150.3,354.74) .. controls (149.94,311.11) and (185.02,275.45) .. (228.65,275.09) .. controls (272.28,274.73) and (307.94,309.81) .. (308.29,353.44) -- cycle ;

\draw (333,4.4) node [anchor=north west][inner sep=0.75pt]    {$z$};
\draw (611,235.4) node [anchor=north west][inner sep=0.75pt]    {$x$};

\end{tikzpicture}

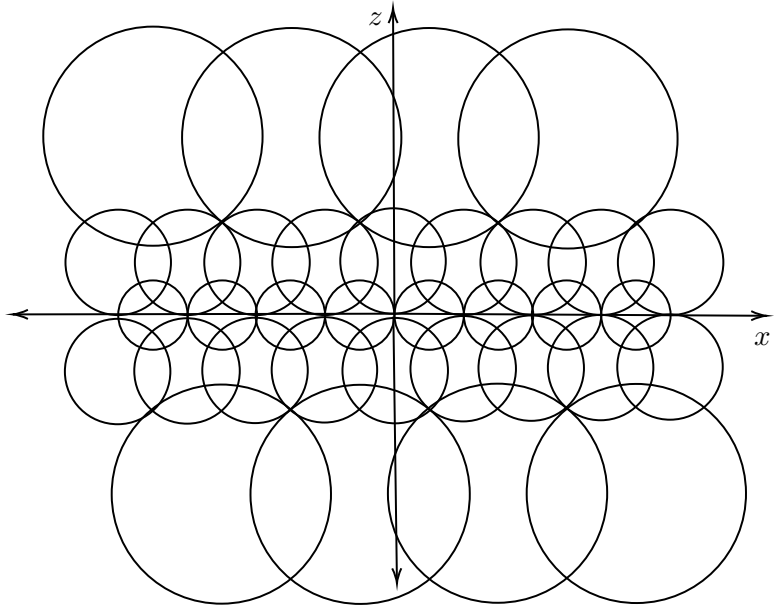
\captionof{figure}{Balls grow only along the $z$ direction}\label{example}
	\end{center} 
\end{enumerate}

\begin{remark}
As illustrated by the examples above, Definition~\ref{Cr} encompasses a broad class of intermediate spaces situated between \( L^p_{\uloc}(\mathbb{R}^3) \) and \( L^p_{\loc}(\mathbb{R}^3) \). A central difficulty in establishing results for this family of spaces lies in the need to handle a wide spectrum of spatial scales. Importantly, the balls in \( \mathscr{C} \) are permitted to expand or contract in arbitrary directions. While \cite{BC, BK1} consider balls that grow isotropically and \cite{LR} focuses on balls of fixed radius, addressing anisotropic growth---where balls can both enlarge and shrink differently along each axis---the proof requires a fundamentally different strategy. To manage the diversity of scales involved, we reformulated key estimates on a sufficiently fine, uniform mesh over \( \mathbb{R}^3 \), which provided a consistent framework to carry out the necessary analysis across the entire family of spaces.

This scale variability also introduces substantial obstacles when attempting to extend local existence results to global ones. The method in \cite{LR} depends critically on uniform control at a fixed scale---something we cannot guarantee when the underlying balls grow unevenly. On the other hand, the approach in \cite{BC} relies on the fact that balls expand rapidly as one moves away from the origin, a property that likewise does not hold in our setting. As a result, both of these existing frameworks become inapplicable in our context.
\end{remark}

\subsection{Sketch of the proof of Theorem~\ref{thm_main}}\label{sec_sketch}

We first consider the case \eqref{ass2}, i.e., $|B|^{1/3} \gec |x_B|$. This lower bound implies that for any sufficiently large $n$ there are only finitely many (independently of $n$) balls $B\in \sc$ intersecting the annulus $B(2n)\setminus B(n)$; see Lemma~\ref{lem_finmany} for a proof. This gives a natural way of approximating any $u_0\in M$ by an $L^2$ divergence-free vector field. Indeed, one may simply apply a cutoff function $Z_n$ such that $\supp \, \nabla Z_n \subset B(2n)\setminus B(n)$, and then apply the Bogovski\u{\i}~\cite{B} theorem to $\div (u_0 Z_n )=u_0 \cdot \nabla Z_n$ to obtain a divergence-free approximation which is globally square integrable on~$\R^3$. This way we can use the classical  Leray theory on existence of weak solutions, and take the limit $n\to \infty$ to obtain the claim of Theorem~\ref{thm_main}.
 There are two main factors that make this possible. First, the $M$-norm of the resulting approximation can be estimated by $\| u_0 \|_M$ because only finitely many $B\in \sc$ intersect $B(2n)\setminus B(n)$.
Second, this way, the approximation satisfies $g=u_0$ on $B(n)$, rather than an approximation in the $M$-norm as $n\to \infty$. This implies that any $u_0\in M$ can be approximated, and not only those $u_0$ in the closure (in the $M$-norm) of~${L^2}$.

This issue becomes much more difficult in the case of~\eqref{ass_extra}. In this case, as $|x_B|\to \infty$, we have no control on the number of balls $B\in \sc$ intersecting any annulus. An easy example to see this is the $\mathrm{uloc}$ setting (i.e., when $B\in \sc \Leftrightarrow B=k+ B(0,1)$, $k\in \Z^3$), in which case an annulus $B(2n)\setminus B(n)$ requires a cover of at least $O(n^3)$ balls from~$\sc$. Even if the annulus is thinner, say $B(n+1)\setminus B(n)$, the cover must have at least $O(n^2)$ balls from $\sc$, which is impossible to handle using a Bogovski\u{\i}-type approach.

In order to deal with this, we devise a completely new procedure of constructing solutions based on a non-divergence-free existence theory. To this end, we consider an auxiliary linear system,
   \begin{equation}\label{v_system_intro}
\begin{split}
	\p_t  \widetilde{u}+(v\cdot\nabla)\widetilde{u}-\Delta \widetilde{u}+\nabla \widetilde{p}&=0,
	\\
   	\widetilde{u}(0)=\widetilde{u}_0,
\end{split}
\end{equation} 
where $v\in L^\infty_t W^{1,\infty}_x$ is a given vector field such that $v=0$ for $|x|\geq n$, and $\widetilde{p}$ satisfies, at each time, the Poisson equation
\eqnb\label{tildep_eq}
\Delta \widetilde{p} = \p_i \p_j (v_j u_i ) - \p_i (u_i \div \, v)  
\eqne
on~$\R^3$. Thanks to this $w\coloneqq \div \, \widetilde{u}$ satisfies the heat equation. In particular, $w$ admits very fast spatial decay, which is, however, only partially helpful.\\

The main challenge is to obtain efficient a~priori bounds in~$M$. Namely, the pressure decomposition~\eqref{tildep_eq} now involves a lower order term involving $\div v$ and a locally integrable kernel (see~\eqref{EQ08} for details). The kernel decays very slowly at far distances, and the main challenge is to handle this lack of decay together with the very limited knowledge of the collection~$\sc$. In fact, the two conditions in Definition~\ref{Cr} give extremely limited information on the structure of the cover of $\R^3$ by balls from $\sc$ for large distances. To this end, we develop several combinatorial tools which enable us to close the pressure estimate on $\widetilde{p}$ with the help of the condition \eqref{ass_extra}; see Section~\ref{sec_lin_p} for details.\\

Another major challenge is to construct solutions using the system~\eqref{tildep_eq}. To this end we divide $[0,T]$ (where $T$ is from Theorem~\ref{thm_apriori}) into $K\in \N$ pieces, and we use \eqref{tildep_eq} with $v^{(0)} \coloneqq 0$ on $[0,T/K]$ to obtain the first iterate~$\widetilde{u}^{(1)}$. We then cut this iterate at radius $n$ and mollify with a mollifier of length-scale $\gamma <  1$ to obtain~$v^{(1)}$. This way we can use $v^{(1)}$ to obtain the second iterate $\widetilde{u}^{(2)}$, on time interval $[T/K,2T/K]$. Continuing such a procedure, we end up with a solution $\widetilde{u}$ to \eqref{tildep_eq} on $[0,T]$ with $v$ of the form of a \emph{retarded mollification} of $\widetilde{u}$ and such that $v=0$ for $|x|\geq n$. The aim is then to take the limit as $n\to \infty$, and then $K\to \infty$ and $\gamma \to \infty$, but the scheme contains two subtle pitfalls into one can fall in this approach. 

The first difficulty is to obtain a uniform a~priori estimate in~$M$. To this end, one cannot simply construct $\widetilde{u}$ one time interval after the previous one, as suggested above, since this would result in an exponential accumulation of a constant in the estimate in the $M$-norm. Instead, at each iteration one needs to solve an initial-value problem from $t=0$. This means that, when finding a $k$-th iteration (i.e., on the time interval $[(k-1)T/K,kT/K]$), one must redefine all $v^{(l)}$'s for $l\leq k$ to find the next iterate $\widetilde{u}^{(k)}$ on $[0,kT/K]$. In other words, this issue is related to an error that accumulates every time the local energy inequality is invoked to obtain an  a~priori estimate in~$M$.\\

The second difficulty is a careful treatment of constants in the $M$-type estimates on the $v^{(k)}$'s and the $\widetilde{u}^{(k)}$'s. Namely, supposing that 
\[\alpha_t [\widetilde{u}^{(1)}]+ \beta_t [\widetilde{u}^{(1)}] \leq C_1
\]
for $t\in [0,T/K]$, the cutoff and mollification  give that 
\[\alpha_t [v^{(1)}]+ \beta_t [v^{(1)}] \leq C_2 C_1
\]
for $t\in [0,T/K]$, where $C_2 \geq 1$ is a constant. Thus, finding the solution $\widetilde{u}^{(2)}$ to \eqref{tildep_eq} on $[T/K,2T/K]$, with ``retarded $v$'', i.e., with $v\coloneqq v^{(1)} (t-T/K)$, will result in an estimate
\[\alpha_t [\widetilde{u}^{(2)}]+ \beta_t [\widetilde{u}^{(2)}] \leq C_3,
\]
where now $C_3$ will depend on both $C_1$ and~$C_2$. It is thus not clear that the constants in the a~priori bounds will not accumulate over $t\in [0,T]$, as the iterations proceed for $k=1,\ldots , K$.  This issue might seem similar to the difficulty described above, but it is in fact completely different. It is not concerned with invoking the local energy inequality to obtain estimates on $\alpha_t [\widetilde{u}^{(k)}]+\beta_t [\widetilde{u}^{(k)}]$ as the iteration proceeds, but rather is concerned with ensuring that the a~priori estimate obtained for the final iterate $\widetilde{u}$ (on time interval $[0,T]$) is a nonlinear one. 

To be precise, since \eqref{tildep_eq} is a linear system, it will result with an a~priori bound with an implicit constant dependent on the advecting~$v$. The Navier-Stokes equations are not linear, and so, even though the a~priori estimate on the desired solution must depend only on $\| u_0\|_M$, the implicit constant cannot depend on the advecting velocity. This means that an iterative construction using the linear system \eqref{tildep_eq} must quantitatively keep track of the implicit constants in the estimates on 
\[\alpha_t [\widetilde{u}^{(k)}]+ \beta_t [\widetilde{u}^{(k)}] 
\]
on $[0,kT/K]$. This is achieved in Section~\ref{sec_pf_main}, where the most important constant is fixed in~\eqref{choice_C0}. \\

The structure of the paper is as follows. After introducing some preliminary concepts in Section~\ref{sec_prelims}, we discuss, in Section~\ref{sec_prop_C}, some properties of any family $\sc$ of balls satisfying the conditions in Definition~\ref{Cr}.
In Section~\ref{sec_ap}, we then prove Theorem~\ref{thm_apriori} , including establishing in Section~\ref{sec_press} the pressure estimate. Section~\ref{sec_constr} is devoted to the proof of Theorem~\ref{thm_main}, which is much more involved in the case of assumption~\eqref{ass_extra}. Namely, in this case we discuss global well-posedness of the linear system~\eqref{v_system_intro} in Section~\ref{sec_lin_sys}, the a~priori estimate of $\div \widetilde{u}$ in $M$ and its Gaussian decay (Section~\ref{sec_lin_decay}), the estimate on the pressure~$\widetilde{p}$ (Section~\ref{sec_lin_p}), the a~priori estimate for~$\widetilde{u}$ (Section~\ref{sec_lin_apr}), the regularization lemma (Section~\ref{sec_reg_lemma}), and we conclude the construction in Section~\ref{sec_pf_main}. The following section
then proves Theorem~\ref{thm_main} in the case of assumption~\eqref{ass2}.

\section{Notation and preliminaries}\label{sec_prelims}

Here we introduce some preliminary concepts. 
We will use  the summation convention over repeated indices.
We denote by $C>0$ a universal constant, whose value may depend on $\sigma $ and $\eta$ and may change from line to line. Except for this, we will use constants $C_0,C_1,C_2\geq 1$ (in~\eqref{v2condition}--\eqref{v3condition}, \eqref{apr} and \eqref{vtilde2}--\eqref{vtilde3},  respectively), whose values \emph{do not change} from line to line.  For a given ball $\QQ\subset \R^3$, denote by $x_{\QQ}$ its center and by $r_{\QQ}$ its radius.

\begin{definition}\label{cubes}
Given $\QQ\in \mathscr{C}$, we set $P^{(0)} =\QQ^{(0)} \coloneqq \QQ$ and, for $n\geq 1$, denote by
\begin{enumerate}
\item[(i)] $P^{(n)}\coloneqq\left\{\QQ'\in\mathscr{C}|\QQ'\cap \QQ^{(n-1)}\neq\emptyset\right\}$ the collection of balls in layers from $0$ until $n$,
\item[(ii)] $\QQ^{(n)} \coloneqq \bigcup_{\QQ'\in P^{(n)}} \QQ'$ the union of balls in layers $0$ to  $n$, and
\item[(iii)] $S^{(n)}  \coloneqq \bigcup_{B\subset \QQ^{(n)},\, B\not \subset \QQ^{(n-1)}} B$ the union of balls in the $n$-th layer.
\end{enumerate}
\end{definition}

For  $B\in \sc$ we will consider the pressure decomposition \eqref{localpress_intro} of the form 
\begin{equation}\label{localpress}
		\begin{split}
		p(x,t)-p_\QQ(t)
			 &=G_{ij}^B (u_i u_j) \coloneqq 
     			 -|u(x,t)|^2 
      			+\text{p.v.}\int_{y\in \QQ^{(7)}}K_{ij}(x-y)(u_i(y,t)u_j(y,t))\d y
      			\\&\indeq
      			+\int_{y\notin \QQ^{(7)}}\left(K_{ij}(x-y)-K_{ij}(x_\QQ-y)\right)(u_i(y,t)u_j(y,t))\d y
			\end{split}
		\end{equation}
for $x\in B^{(3)}$, i.e., we take $B^*\coloneqq B^{(3)}$, $B^{**} \coloneqq B^{(7)}$.

\section{Properties of $\mathscr{C}$}\label{sec_prop_C}

From here on, we fix any $\mathscr{C}$ as in Definition~\ref{Cr}. First, given a ball $\QQ\in \mathscr{C}$,  we establish a lower bound on the distance between two points $x,y$ located in different layers of~$\QQ$.

\cole
\begin{lemma}[Separation of layers]\label{lem_sepa}
Let $\QQ\in \mathscr{C}$ and
 $n\in\mathbb{N}_0$. Suppose that $\QQ_1 = B(x_1,r_1) \in P^{(n)}$ and $\QQ_2=B(x_2,r_2) \in S^{(n+4)}$.  Then,
\eqnb\label{sep}
\mathrm{dist}\,(\QQ_1,\QQ_2)
\geq
\max
\{\kappa (r_1+r_2),1\}
,
\eqne
where $\kappa \coloneqq  \sqrt{1+\fractext{2}{\eta^2}}-1$.
\end{lemma}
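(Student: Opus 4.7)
The plan is as follows.

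First, I would establish the purely combinatorial claim that the graph distance between $\QQ_1$ and $\QQ_2$ in the intersection graph of $\mathscr{C}$ is at least $4$. The key observation is that $\QQ_2 \cap \QQ^{(n+2)} = \emptyset$: if $\QQ_2$ intersected a ball $B \in P^{(n+2)}$, then by the definition of $P^{(n+3)}$ we would have $\QQ_2 \in P^{(n+3)}$ and hence $\QQ_2 \subset \QQ^{(n+3)}$, contradicting $\QQ_2 \in S^{(n+4)}$. Since $\QQ_1 \in P^{(n)}$, every ball within graph-distance $2$ of $\QQ_1$ lies in $P^{(n+2)}$, so this disjointness translates into $\mathrm{dist}_{\mathrm{graph}}(\QQ_1,\QQ_2) \geq 4$.

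Next, I would convert this combinatorial bound into a geometric one. The cover property forces $\mathscr{C}$ to be graph-connected, so a minimum-length path $\QQ_1 = B_0, B_1, \ldots, B_k = \QQ_2$ exists with $k \geq 4$. By minimality, no two non-consecutive balls of this path can intersect (else a shortcut would shorten it), yielding
\[
|x_{B_i} - x_{B_j}| > r_{B_i} + r_{B_j}\qquad\text{for } |i-j| \geq 2,
\]
while adjacent balls satisfy $|x_{B_i} - x_{B_{i+1}}| \leq r_{B_i} + r_{B_{i+1}}$ together with the ratio bound $r_{B_{i+1}}/r_{B_i} \in [1/\eta, \eta]$ from Definition~\ref{Cr}(ii).

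The main obstacle is to extract the specific constant $\sqrt{1 + 2/\eta^2}$ from these constraints. Setting $\vec{u}_i := x_{B_i} - x_1 \in \R^3$ and, without loss of generality, $\vec{u}_0 = 0$ and $\vec{u}_k = (D, 0, 0)$, one reduces the problem to minimising $D$ subject to the above edge, non-edge, and radius-ratio constraints in the critical case $k = 4$ (the case $k > 4$ yields an even stronger bound via the same argument). The infimum is attained in a symmetric configuration in which the intermediate radii saturate the ratio bound and several of the non-edge constraints are tight simultaneously; the factor $\sqrt{2}/\eta$ beyond the straight-line contribution $r_1+r_2$ encodes the minimal perpendicular detour forced by the non-intersection inequalities on the middle balls $B_1, B_2, B_3$. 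A careful quadratic calculation in these coordinates then gives $D^2 \geq (r_1+r_2)^2(1 + 2/\eta^2)$.

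Finally, the lower bound of $1$ in $\max\{\kappa(r_1+r_2), 1\}$ is a consequence of the volume hypothesis $|\QQ| \geq 4\pi/3$, which forces every ball of $\mathscr{C}$ to have radius at least $1$. Applied to any intermediate ball $B_i$ in the minimum chain (which satisfies $B_i \cap \QQ_1 = B_i \cap \QQ_2 = \emptyset$), the non-edge inequalities $|x_1 - x_{B_i}| > r_1 + r_{B_i}$ and $|x_{B_i} - x_2| > r_{B_i} + r_2$ combine with $r_{B_i} \geq 1$ to produce $\mathrm{dist}(\QQ_1, \QQ_2) \geq 1$ after a short direct triangle-inequality computation.
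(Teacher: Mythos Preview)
Your combinatorial opening is correct: $\QQ_2 \cap \QQ^{(n+2)}=\emptyset$, hence the graph distance from $\QQ_1$ to $\QQ_2$ is at least~$4$. The gap is in the next step. The optimisation over minimum graph paths does \emph{not} yield $|x_0-x_4|\ge (r_0+r_4)\sqrt{1+2/\eta^2}$; in fact the infimum over such paths is only $r_0+r_4$. Here is an explicit counterexample with $\eta=1$ and all radii equal to $r$: take
\[
x_0=(-1.1r,0),\quad x_1=(-1.1r,r),\quad x_2=(0,1.7r),\quad x_3=(1.1r,r),\quad x_4=(1.1r,0).
\]
One checks directly that consecutive balls intersect, non-consecutive balls are disjoint, yet $|x_0-x_4|=2.2r<2r\sqrt3$, so $\mathrm{dist}(B_0,B_4)=0.2r$, far below the claimed $\kappa(r_0+r_4)=2r(\sqrt3-1)\approx 1.46r$. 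The path constraints you impose simply do not force the endpoints apart; the intermediate balls can sit ``above'' the segment $[x_0,x_4]$ and contribute nothing to its length. Your argument for the bound $\ge 1$ fails for the same reason: from $|x_1-x_{B_i}|>r_1+r_{B_i}$ and $|x_{B_i}-x_2|>r_{B_i}+r_2$ the triangle inequality gives only an \emph{upper} bound on $|x_1-x_2|$, not a lower one.

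What the graph-path argument misses is the covering hypothesis. In the counterexample above, the point $(0,0)$ on the segment $[x_0,x_4]$ is not contained in any of $B_0,\ldots,B_4$; in an actual cover some further ball must contain it, and that ball creates shortcuts that destroy the minimum-path property. The paper exploits this directly: take $x\in\QQ_1$, $y\in\QQ_2$ realising the distance, observe that the segment $[x,y]$ must cross some ball $\QQ_3\subset S^{(n+2)}$ (since $S^{(n+2)}$ separates $\QQ^{(n)}$ from $S^{(n+4)}$), let $z$ be the midpoint of $[x,y]\cap\QQ_3$, and apply the Pythagorean identity to the triangle $x_1,x_3,z$. From $|x_1-x_3|\ge r_1+r_3$ (disjointness) and $|x_3-z|\le r_3$ one gets $(|z-x|+r_1)^2\ge r_1^2+2r_1r_3$, hence $|z-x|\ge r_1(\sqrt{1+2r_3/r_1}-1)\ge \kappa r_1$ and, by the same inequality, $|z-x|\ge\tfrac12$. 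The point is that $\QQ_3$ lies \emph{on} the segment, which is exactly the geometric information your optimisation discards.
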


\begin{proof}[Proof of Lemma~\ref{lem_sepa}]
We first claim that 
\eqnb\label{123}
S^{(n)} \cap S^{(n+2)} =\emptyset.
\eqne
Indeed, suppose that there exists $x\in S^{(n)} \cap S^{(n+2)}$. Since $S^{(n+2)}$ is a union of balls, this means that $x\in \QQ'$ for some $\QQ'\subset S^{(n+2)}$. But since  $S^{(n)} \subset \QQ^{(n)}$, we have that $x\in \QQ^{(n)}\cap \QQ' $. Therefore, $\QQ'\subset \QQ^{(n+1)}$. This is a contradiction since each ball in $S^{(n+2)}$ is not a subset of~$\QQ^{(n+1)}$.

Let $x\in \QQ_1$, $y\in \QQ_2$ be such that 
\[
|x-y| = \mathrm{dist}\,(\QQ_1,\QQ_2).
\]
Noting that $S^{(n+2)}$ is located between $\QQ^{(n)}$ and $S^{(n+4)}$, there exists $\QQ_3=B(x_3,r_3) \subset S^{(n+2)}$ such that
$[x,y] \cap \QQ_3 \ne \emptyset$. Denote by $z$ the center of the line segment $ [x,y] \cap \QQ_3$.  
\begin{center} 
\includegraphics[width=0.7\textwidth]{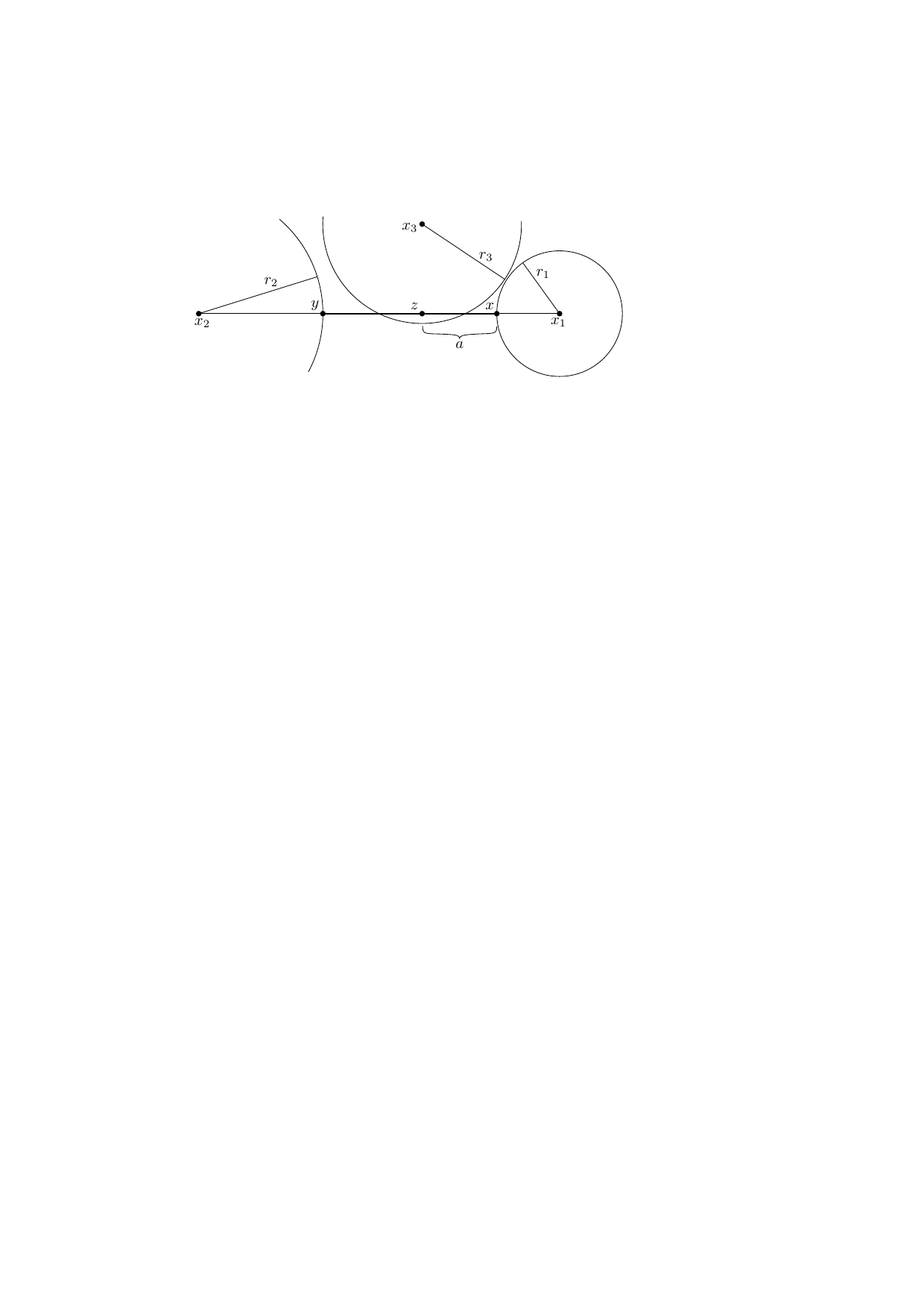}
  \captionof{figure}{A sketch of the proof of~\eqref{pita1}.}\label{fig_pita}
  \end{center} 
Applying the Pitagorean identity  to the triangle with vertices $x_1$, $x_3$, and $z$ gives 
\eqnb\label{pita1}
	(a+r_1)^2 
	= |x_3-x_1|^2 - |x_3-z|^2 
	\geq (r_1+r_3)^2 - r_3^2 
	= r_1^2 +2r_1r_3,
\eqne
where $a\coloneqq |z-x|$ (see Figure~\ref{fig_pita}), using \eqref{123} in the above inequality.  Thus,
\eqnb\label{temp00}
a\geq-r_1+\sqrt{r_1^2+2r_1r_3}
	=r_1\left(-1+\sqrt{1+\frac{2r_3}{r_1}}\right)
	\geq r_1 \kappa .
\eqne
Repeating the same argument for the triangle with vertices $x_2$, $x_3$, $z$, we obtain~\eqref{sep} 
with $\kappa(r_1+r_2)$.
The inequality \eqref{pita1} also implies that $a\geq 1/2$. Indeed, \eqref{pita1} is equivalent to $a^2+2ar_1 \geq 2r_1 r_3$, which implies that either $a^2 \geq r_1r_3$ or $2ar_1 \geq r_1r_3$. Either case implies that $a\geq 1/2$.
Consequently, we obtain
$\mathrm{dist}\,(\QQ_1,\QQ_2)
\geq
1
$.
\end{proof}

\cole
\begin{corollary}
\label{P01}
Let $\QQ\in \mathscr{C}$ and $x\in \QQ$. Then,
\eqnb\label{whereisy}
y\in \QQ^{(4\lceil |x-y|\rceil)},\quad \text{ for each } y \in \R^3.
\eqne
\end{corollary}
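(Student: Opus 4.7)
My plan is to walk from $x$ to $y$ along the segment $[x,y]$ in steps of length at most~$1$, and prove inductively that each step raises the layer index by at most~$4$. Concretely, I set $k := \lceil |x-y|\rceil$; if $k=0$ then $y=x \in \QQ = \QQ^{(0)}$ and the claim is immediate, so I assume $k \geq 1$ and for $0 \leq i \leq k$ define $z_i := x + (i/k)(y-x)$, which satisfy $|z_i - z_{i+1}| = |x-y|/k \leq 1$. It then suffices to establish, by induction on~$i$, that $z_i \in \QQ^{(4i)}$; taking $i=k$ then yields $y \in \QQ^{(4k)}$, as desired.

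The crux of the proof will be a one-step \emph{layer-lifting} claim: \emph{if $p \in \QQ^{(n)}$ and $q \in \R^3$ satisfies $|p-q| \leq 1$, then $q \in \QQ^{(n+4)}$}. To prove it I pick $B_p \in P^{(n)}$ containing~$p$ and any $B_q \in \mathscr{C}$ containing~$q$ (which exists because $\mathscr{C}$ covers~$\R^3$), and let $m$ be the smallest integer with $B_q \in P^{(m)}$. If $m \leq n+4$ we are done; otherwise $m \geq n+5$, so $B_p \in P^{(n)} \subset P^{(m-4)}$ and $B_q \in S^{(m)}$, whence Lemma~\ref{lem_sepa} applies. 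The main obstacle will be that the lemma, as stated, only yields $\mathrm{dist}(B_p,B_q) \geq 1$, which is a priori consistent with $|p-q|=1$; however, since every ball in $\mathscr{C}$ has radius at least~$1$ (because $|\QQ|\geq 4\pi/3$), the Pythagorean bound $a \geq \sqrt{r_1^2 + 2 r_1 r_3}-r_1$ extracted in the proof of the lemma is in fact strict: the function $r \mapsto \sqrt{r^2+2r}-r$ is increasing on $[1,\infty)$, giving $a \geq \sqrt{3}-1$, and the same argument on the other side of the chord of $B_3$ gives $b \geq \sqrt{3}-1$, so that
\[
|p-q| \geq \mathrm{dist}(B_p,B_q) = a+b \geq 2(\sqrt{3}-1) > 1,
\]
contradicting $|p-q| \leq 1$. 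Hence $m \leq n+4$, which proves the claim.

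With the layer-lifting claim in hand the induction runs immediately: the base case $z_0 = x \in \QQ = \QQ^{(0)}$ is trivial, and applying the claim with $p = z_i$, $q = z_{i+1}$, $n = 4i$ produces $z_{i+1} \in \QQ^{(4(i+1))}$. After $k$ applications I obtain $y = z_k \in \QQ^{(4k)} = \QQ^{(4\lceil |x-y|\rceil)}$, as required.
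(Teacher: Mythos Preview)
Your proof is correct and follows essentially the same inductive step-by-step strategy as the paper's (very brief) proof, which simply asserts that $B(x,1)\subset\QQ^{(4)}$ by~\eqref{sep} and then inducts to get $B(x,k)\subset\QQ^{(4k)}$. You are, however, more careful on one point: the separation lemma as stated only gives $\mathrm{dist}(B_p,B_q)\geq 1$, which does not by itself exclude $|p-q|=1$, and the paper's proof does not address this boundary case explicitly. Your remedy---going back into the proof of Lemma~\ref{lem_sepa}, using $r_1,r_2,r_3\geq 1$ to sharpen the Pythagorean bound to $a,b\geq\sqrt3-1$, and concluding $\mathrm{dist}(B_p,B_q)\geq 2(\sqrt3-1)>1$---is a clean and correct way to close this gap. (An alternative one-line fix is to note that $\QQ^{(4k)}$, being a finite union of closed balls, is closed, so $B(x,k)_{\mathrm{open}}\subset\QQ^{(4k)}$ implies the same for the closed ball; but your quantitative sharpening is equally valid and arguably more informative.) One minor wording point: when you write ``$B_q\in S^{(m)}$'', what Lemma~\ref{lem_sepa}'s proof actually uses is $B_q\notin P^{(m-1)}$, which you do have from the minimality of~$m$; the argument goes through with that.
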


\begin{proof}[Proof of Corollary~\ref{P01}]
By \eqref{sep},
we have
$B(x,1) \subset \QQ^{(4)}$, and so, by induction, $B(x,k)\subset \QQ^{(4k)}$ for every $k\geq 1$. Thus, if $|x-y| =d$, where $d\in (k-1,k]$, then $y\in \QQ^{(4k)}$, which shows~\eqref{whereisy}.
\end{proof}
The following verifies the asymptotic estimate of sizes of balls $B\in \sc$, as $|x_B|\to \infty$, stated in~\eqref{asympt}.

\begin{corollary}\label{cor_forintro} There exists a constant $C=C(\sc)>0$ such that $|B|^{1/3} \leq C ( 1+ |x_B|)$ for all $B\in \sc$. 
\end{corollary}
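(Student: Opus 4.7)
The plan is to reduce to Lemma~\ref{lem_sepa}, anchored at a base ball containing the origin. Since $\sc$ covers $\R^3$, I would fix any $B_0 \in \sc$ with $0 \in B_0$; its radius $r_{B_0}$ is a finite quantity depending only on~$\sc$. For an arbitrary $B \in \sc$, let $k \geq 0$ denote the smallest integer such that $B \subset \QQ^{(k)}$ (where $\QQ = B_0$ in the notation of Definition~\ref{cubes}), so that $B = B_0$ when $k=0$ and otherwise $B$ first appears in the $k$-th layer. I would split into two cases based on the size of~$k$.

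If $k \leq 3$, then the definition of the layers produces a chain $B_0, \QQ_1, \ldots, \QQ_{k-1}, B$ of consecutively intersecting balls in $\sc$, and applying Definition~\ref{Cr}(ii) along this chain yields $r_B \leq \eta^k r_{B_0} \leq \eta^3 r_{B_0}$, which is bounded by a constant depending only on~$\sc$. If $k \geq 4$, I would apply Lemma~\ref{lem_sepa} with $n = k-4$, using that $B_0 \in P^{(0)} \subset P^{(k-4)}$ and $B \in S^{(k)} = S^{(n+4)}$, to obtain
\[
\mathrm{dist}(B_0, B) \geq \kappa(r_{B_0} + r_B) \geq \kappa\, r_B.
\]
On the other hand, since $0 \in B_0$ and $x_B \in B$, the infimum defining the distance gives $\mathrm{dist}(B_0, B) \leq |0 - x_B| = |x_B|$, so that $r_B \leq |x_B|/\kappa$.

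Combining the two cases yields $r_B \leq C(\sc)(1+|x_B|)$, and the corollary follows since $|B|^{1/3}$ is a fixed multiple of~$r_B$. I do not anticipate any substantial obstacle; the only mild point is the case split between small and large $k$, which is forced because Lemma~\ref{lem_sepa} requires a separation of at least four layers, while the small-$k$ regime is handled trivially by the ratio condition.
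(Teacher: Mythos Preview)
Your proof is correct and follows essentially the same approach as the paper: both anchor at a ball $B_0\in\sc$ containing the origin, split into a ``near'' case (handled by the ratio condition of Definition~\ref{Cr}(ii)) and a ``far'' case (handled by the separation Lemma~\ref{lem_sepa}). The only cosmetic difference is that the paper builds the layer decomposition around $B$ and asks whether $B_0$ lies in $B^{(4)}$, whereas you build it around $B_0$ and ask which layer $k$ contains $B$; your direction makes the chain argument for small $k$ and the application of Lemma~\ref{lem_sepa} for $k\geq 4$ slightly more transparent.
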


\begin{proof}[Proof of Corollary~\ref{cor_forintro}]
Let $B_0\in \sc$ be such that $0\in B_0$
and $B \in \sc $ such that $|x_B|\geq 2|x_{B_0}|$ and $B_0 \not \subset B^{(4)}$. Then,
\[
|x_B| \geq |x_B - x_{B_0} | - |x_{B_0}| \geq |x_B - x_{B_0} | - |x_{B}|/2, 
\]
so that $|x_B| \geq 2 |x_B - x_{B_0}|$. Therefore, \eqref{sep} implies that
\[
|x_B| > 2 \,\mathrm{dist}(B,B_0) \geq 2 \max \{ \kappa (r_B +r_{B_0} ),1 \} \geq c r_B = c |B|^{1/3},
\]
where, in the last inequality, we used that $r_B \geq 1$ (recall~Definition~\ref{Cr}). If $|x_B|\leq 2|x_{B_0}|$ or $B_0 \subset B^{(4)}$, then $|B|\lec_{\sc} 1$.
\end{proof}

In the next statement, we compare the distance of points in well-separated balls
compared to the distance between their centers.

\cole
\begin{lemma}[Arbitrary points vs.~centers]\label{lem_sep}
Let $\QQ,\QQ'\in\mathscr{C}$ be balls of radii $r_1$ and $r_2$ respectively such that $\QQ'\not\subset \QQ^{(3)}$. Then,
\eqnb\label{EQ01}
(1-\beta ) | x_\QQ - x_{\QQ'} |
\leq |x-y|
\leq 2 | x_\QQ - x_{\QQ'} |
\eqne
for all $x\in \QQ$, $y\in \QQ'$, where $\beta \coloneqq\left( 1+ \frac{2}{\eta^2} \right)^{-1/2} \in (0,1)$.
 \end{lemma}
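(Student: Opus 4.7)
The plan is to reduce the statement to Lemma~\ref{lem_sepa} and then apply the triangle inequality twice. The main idea is that the separation constant $\kappa=\sqrt{1+2/\eta^2}-1$ from Lemma~\ref{lem_sepa} and the constant $\beta=(1+2/\eta^2)^{-1/2}$ appearing in the statement are related by $\beta = 1/(1+\kappa)$, which is exactly the algebraic identity one needs.

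First, I would locate the ball $\QQ'$ in some layer. Since $\bigcup_{B\in\sc}B=\R^3$, the center $x_{\QQ'}$ is joined to $x_\QQ$ by a line segment that is covered by finitely many balls in $\sc$, and walking along this segment produces a finite chain of consecutively intersecting balls from $\QQ$ to $\QQ'$. Hence $\QQ'\subset \QQ^{(k)}$ for some smallest integer $k\geq 0$, and the hypothesis $\QQ'\not\subset \QQ^{(3)}$ forces $k\geq 4$; by minimality of $k$, we have $\QQ'\in S^{(k)}$. Setting $n\coloneqq k-4\geq 0$ and noting that $\QQ\in P^{(0)}\subset P^{(n)}$, Lemma~\ref{lem_sepa} yields
\[
\mathrm{dist}(\QQ,\QQ')\geq \kappa(r_1+r_2).
\]

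Second, since the two balls are disjoint along the segment joining their centers, we have $|x_\QQ-x_{\QQ'}|= \mathrm{dist}(\QQ,\QQ')+r_1+r_2$, and combining with the previous inequality gives $|x_\QQ-x_{\QQ'}|\geq (1+\kappa)(r_1+r_2)$, i.e.
\[
r_1+r_2 \leq \beta\,|x_\QQ-x_{\QQ'}|,
\]
using $\beta=1/(1+\kappa)$. This key intermediate bound trades the radii of the two balls for a fraction of the center-to-center distance.

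Third, for any $x\in\QQ$ and $y\in\QQ'$, the triangle inequality gives both
\[
|x-y|\leq |x-x_\QQ|+|x_\QQ-x_{\QQ'}|+|x_{\QQ'}-y|\leq (1+\beta)|x_\QQ-x_{\QQ'}|\leq 2|x_\QQ-x_{\QQ'}|,
\]
since $\beta<1$, and
\[
|x-y|\geq |x_\QQ-x_{\QQ'}|-|x-x_\QQ|-|x_{\QQ'}-y|\geq (1-\beta)|x_\QQ-x_{\QQ'}|,
\]
completing both inequalities in \eqref{EQ01}. I don't expect a real obstacle here: the only small subtlety is the bookkeeping step of verifying that $\QQ'$ lies in some layer $k\geq 4$ so that Lemma~\ref{lem_sepa} can be invoked with $n=k-4$; the algebraic identification $1/(1+\kappa)=(1+2/\eta^2)^{-1/2}=\beta$ then does all the work.
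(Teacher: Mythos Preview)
Your proof is correct and follows essentially the same route as the paper: both arguments reduce to Lemma~\ref{lem_sepa} to obtain $r_1+r_2\leq \beta\,|x_\QQ-x_{\QQ'}|$ via the identity $\beta=1/(1+\kappa)$, and then apply the triangle inequality twice. If anything, you are more careful than the paper in justifying that $\QQ'\in S^{(k)}$ for some $k\geq 4$ before invoking Lemma~\ref{lem_sepa}; the paper simply cites \eqref{sep} directly (your chain-of-balls argument could also be replaced by a one-line appeal to Corollary~\ref{P01}).
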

Note that Lemma~\ref{lem_sep} implies also that
\eqnb\label{EQ01cons}
|x_B - y| \lec_\eta | x-y | \lec_\eta |x_B -y | \qquad \text{ for all }x\in B \in \sc , y\not \in B^{(3)}.
\eqne
Indeed, if $B'\in \sc$ is such that $y\in B'$, then $| x-y| \geq (1-\beta )^{-1} |x_B - x_{B' }| \geq \frac12 (1-\beta )^{-1} |x_B - y |$ (since $x_B \in B$), which gives the first inequality in~\eqref{EQ01cons}. Similarly, $|x_B - y | \geq (1-\beta )|x_B-x_B'| \geq (1-\beta ) |x_B-y|/2$, which gives the second inequality.

\begin{proof}[Proof of Lemma~\ref{lem_sep}.]
Let $x\in \QQ$ and $y\in \QQ'$.
Then we have
\[
 \begin{split}
  |x-y|
  &\leq |x-x_\QQ|
  +|x_\QQ-x_{\QQ'}|
  +|x_{\QQ'}-y|
  \leq 
  |x_\QQ-x_{\QQ'}|
  +r_1
  +r_2
  \leq  2|x_\QQ-x_{\QQ'}|
   ,
  \end{split}
\]
proving the second inequality in~\eqref{EQ01}.

For the first inequality in~\eqref{EQ01},  we first use \eqref{sep} to note that
\eqnb\label{EQ01a}
	|x_\QQ-x_{\QQ'}|= r_1 + r_2 + \mathrm{dist}\, (\QQ,\QQ') \geq r_1 + r_2 + \kappa (r_1+r_2 ) = \sqrt{1+\frac{2}{\eta^2 } } (r_1+r_2) .
	\eqne
Thus,
 \[
   |x_\QQ-x_{\QQ'}|
   \leq 
   |x_\QQ-x|
   +|x-y|
   +|y-x_{\QQ'}|
   \leq 
   |x-y|
   +r_1
   +r_2
   \leq |x-y|
   + \beta |x_\QQ-x_{\QQ'}|,
  \]
concluding the proof of the first inequality in~\eqref{EQ01}.
\end{proof}

 We now mention a property which will be used in the construction of solutions (Section~\ref{sec_constr}) to treat $\div \widetilde{u}$ (in~\eqref{w_ptwise}) and the pressure estimate (in~\eqref{vcond11}, as well as in estimating the last term in~\eqref{app1}) of a solution $\widetilde{u}$ to the linear system~\eqref{v_system_intro}.  
  Namely, we need to quantify some behavior of balls from $\mathscr{C}$ touching a ball of large radius. To this end, let $B_0\in \mathscr{C}$ be any ball such that $0\in B_0$ and for each $R>0$ sufficiently large (depending on $\mathscr{C}$) so that $B_0^{(7)} \subset B(0,R)$, we set
\[
\mathscr{C}_R \coloneqq \{ B \in \mathscr{C} \colon B\cap B(0,R)  \ne \emptyset \}.
\]

\begin{proposition}\label{prop_CR}
For every $B \in \mathscr{C}_R$,
\eqnb\label{vcond1}
| x_B | \lec_\eta R\qquad \text{ and }\qquad r_B \lec_\eta R.
\eqne
\end{proposition}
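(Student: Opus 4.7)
The plan is to reduce both inequalities to the single estimate $r_B\lec_\eta R$. Indeed, picking any $x\in B\cap B(0,R)$, the triangle inequality gives $|x_B|\leq|x_B-x|+|x|\leq r_B+R$, so once the radius bound is established the center bound follows immediately.

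To bound $r_B$, I would exploit the layered structure of Definition~\ref{cubes} built around the distinguished ball $B_0$, taking $\QQ=B_0$ so that $P^{(0)}=\{B_0\}$, and split into two cases. If $B\subset B_0^{(7)}$, then since by hypothesis $B_0^{(7)}\subset B(0,R)$, the ball $B$ sits inside a ball of radius $R$ and hence $r_B\leq R$. If instead $B\not\subset B_0^{(7)}$, I let $k\geq 8$ be the smallest integer with $B\subset B_0^{(k)}$; then $B\subset S^{(k)}$ by the definition of the $k$-th layer. Setting $n\coloneqq k-4\geq 4$, one has $B_0\in P^{(0)}\subset P^{(n)}$ and $B\subset S^{(n+4)}$, so Lemma~\ref{lem_sepa} applies and yields $\mathrm{dist}(B_0,B)\geq \kappa(r_{B_0}+r_B)\geq \kappa r_B$. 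Since $0\in B_0$ and $B\cap B(0,R)\neq\emptyset$, the same distance is trivially at most $R$, so $r_B\leq R/\kappa$ with $\kappa=\kappa(\eta)>0$.

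The argument is essentially careful bookkeeping with the layer indices; the only subtle point is the choice $n=k-4$, which is what makes Lemma~\ref{lem_sepa} applicable and guarantees that the resulting separation constant depends only on $\eta$. I do not foresee any substantive obstacle, and the role of the hypothesis $B_0^{(7)}\subset B(0,R)$ is simply to make the trivial ``inside'' case available so that the nontrivial ``outside'' case can be handled by the layer-separation estimate.
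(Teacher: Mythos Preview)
Your argument is correct and follows essentially the same approach as the paper: both rely on the separation estimate of Lemma~\ref{lem_sepa} (the paper via its consequence~\eqref{EQ01a}) to bound $r_B$ when $B$ lies outside $B_0^{(7)}$, and both use the hypothesis $B_0^{(7)}\subset B(0,R)$ for the remaining case. The only differences are organizational---your case split on $B\subset B_0^{(7)}$ versus the paper's on $|x_B|\leq R$, and your reduction to the single bound $r_B\lec_\eta R$ first---which arguably make the argument slightly cleaner.
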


\begin{proof}[Proof of Proposition~\ref{prop_CR}]  If $|x_B|\leq R$, then the claim follows trivially. If $|x_B |>R$, then $B\not \in B_0^{(7)}$, and so by the same argument as in~\eqref{EQ01a},
\eqnb\label{B0aa}
r_{\QQ} \leq \left( 1+ \frac{2}{\eta^2 } \right)^{-1/2} | x_{\QQ}-  x_{\QQ_0} |.
\eqne
Thus,
\begin{align}
  \begin{split}
| x_{\QQ}-  x_{\QQ_0} |
&\leq | x_{\QQ_0} | +  |x_{\QQ} |  \leq  R + | y | + |x_{\QQ} - y| \leq  2R + r_{\QQ}
\\&
\leq 2R  + \left( 1+ \frac{2}{\eta^2 } \right)^{-1/2} | x_{\QQ}-  x_{\QQ_0} |,
  \end{split}
   \label{EQ05}
   \end{align}
where $y\in B$ is such that $|y|\leq R$ (which exists since $B' \in \mathscr{C}$). Consequently, $| x_{\QQ}-  x_{\QQ_0} | \lec_\eta R $, and \eqref{vcond1} follows by the triangle inequality and~\eqref{B0aa}. 
\end{proof}

\section{A~priori estimates}\label{sec_ap}

Here we prove Theorem~\ref{thm_apriori}.    Given $\QQ\in\sc$, let $\phi_\QQ\in C_c^\infty (B^{(3)})$ be such that  $\phi_\QQ=1$ on $\QQ$ and 
    \begin{equation}\label{phiq}
    	\|D^k \phi_\QQ\|_{L^{\infty}}\lec_k {|\QQ|^{-k/3}}
    \end{equation}
for every $k\geq 0$; note that the choice of such $\phi_B$ is possible due to the separation Lemma~\ref{lem_sepa}. We will use the following pressure estimate, which we prove in Section~\ref{sec_press}.

\begin{lemma}[Pressure estimate]\label{lem_pressure}
Let $u$ be any local energy solution on $[0,T]$ and let $p(x,t) - p_B(t)$ be given by the pressure decomposition~\eqref{localpress}. Then
 \begin{equation}\label{pressureest}
      \begin{split}
          \frac{1}{|\QQ|^{1/3}}\int_0^T\int_{\QQ^{(3)}}|p-p_\QQ(t)|^{3/2}
          &\leq 
   C\sup_{\QQ'\subset \QQ^{(7)}}\frac{1}{|\QQ'|^{1/3}}\int_0^T\int_{\QQ'}|u|^3+
   CT|\QQ|^{2/3}\alpha_T^{3/2}
   .
         \end{split}
  \end{equation}
\end{lemma}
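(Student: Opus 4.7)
My plan is to decompose $p(x,t)-p_\QQ(t)$ according to the three summands on the right of~\eqref{localpress}: call them $I_1(x,t)=-|u(x,t)|^2$, $I_2$ the local principal value integral over $y\in\QQ^{(7)}$, and $I_3$ the far-field kernel-difference integral over $y\notin\QQ^{(7)}$, and estimate each contribution to $\frac{1}{|\QQ|^{1/3}}\int_0^T\int_{\QQ^{(3)}}|\cdot|^{3/2}$ separately. The pieces $I_1$ and $I_2$ will feed into the $\sup_{\QQ'\subset\QQ^{(7)}}$-quantity on the right of~\eqref{pressureest}, while $I_3$ will produce the $T|\QQ|^{2/3}\alpha_T^{3/2}$ term.

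For $I_1$ the contribution is $\int_0^T\int_{\QQ^{(3)}}|u|^3$, which I would handle by covering $\QQ^{(3)}$ by the at most $\sigma^3$ balls in $P^{(3)}\subset P^{(7)}$, each of radius comparable to $r_\QQ$ (by Def~\ref{Cr}(ii) iterated), and absorbing the resulting finite sum into the $\sup_{\QQ'\subset\QQ^{(7)}}$. For $I_2$, since $K_{ij}$ is a Calder\'on--Zygmund kernel whose operator is bounded $L^{3/2}(\R^3)\to L^{3/2}(\R^3)$, I obtain
\begin{equation*}
\|I_2(\cdot,t)\|_{L^{3/2}(\QQ^{(3)})}\lec \bigl\||u(\cdot,t)|^2\mathbf{1}_{\QQ^{(7)}}\bigr\|_{L^{3/2}}=\|u(\cdot,t)\|_{L^3(\QQ^{(7)})}^2;
\end{equation*}
raising to the $3/2$-power, integrating in time, and covering $\QQ^{(7)}$ by the balls in $P^{(7)}$ again yields the $\sup_{\QQ'\subset\QQ^{(7)}}$ bound.

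For $I_3$ I would apply the mean value theorem to $K_{ij}$, which is homogeneous of degree~$-3$ with $|\nabla K_{ij}(z)|\lec|z|^{-4}$. Using $|x-x_\QQ|\lec r_\QQ$ for $x\in\QQ^{(3)}$ (by finitely many triangle inequalities and Def~\ref{Cr}(ii)) together with $|z-y|\sim|x_\QQ-y|$ for $z\in[x_\QQ,x]$ and $y\notin\QQ^{(7)}$ (from~\eqref{EQ01cons} applied to a ball in $P^{(3)}$ containing $z$, after observing that Lemma~\ref{lem_sepa} gives $|x_\QQ-y|\gec r_\QQ$), I get
\begin{equation*}
|K_{ij}(x-y)-K_{ij}(x_\QQ-y)|\lec \frac{r_\QQ}{|x_\QQ-y|^4}.
\end{equation*}
The proof of the lemma then reduces to the pointwise-in-$t$ combinatorial bound
\begin{equation*}
\int_{y\notin\QQ^{(7)}}\frac{|u(y,t)|^2}{|x_\QQ-y|^4}\,\d y\lec \frac{\alpha_T}{r_\QQ},
\end{equation*}
which yields $|I_3(x,t)|\lec\alpha_T$ uniformly on $\QQ^{(3)}$; since $|\QQ^{(3)}|\lec|\QQ|$, the desired $T|\QQ|^{2/3}\alpha_T^{3/2}$ contribution follows.

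The main obstacle is the last combinatorial estimate. To prove it, I would cover $\R^3\setminus\QQ^{(7)}$ (with overlap bounded by $\sigma$) by the balls $\QQ'\in\sc$ with $\QQ'\notin P^{(7)}$, apply~\eqref{EQ01cons} to replace $|x_\QQ-y|$ by $|x_\QQ-x_{\QQ'}|$ on each $\QQ'$, and use $\int_{\QQ'}|u|^2\leq|\QQ'|^{2/3}\alpha_T\lec r_{\QQ'}^2\alpha_T$ to reduce the claim to
\begin{equation*}
\sum_{\QQ'\notin P^{(7)}}\frac{r_{\QQ'}^2}{|x_\QQ-x_{\QQ'}|^4}\lec \frac{1}{r_\QQ}.
\end{equation*}
Grouping into dyadic annuli $A_j=\{\QQ':2^jr_\QQ\leq|x_\QQ-x_{\QQ'}|<2^{j+1}r_\QQ\}$ and using~\eqref{EQ01a} to get $r_{\QQ'}\lec 2^jr_\QQ$ on $A_j$, all such balls lie in $B(x_\QQ,C2^jr_\QQ)$, and the bounded overlap property of $\sc$ gives $\sum_{\QQ'\in A_j}r_{\QQ'}^3\lec 2^{3j}r_\QQ^3$. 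Since $r_{\QQ'}\geq 1$ by Def~\ref{Cr}, $r_{\QQ'}^2\leq r_{\QQ'}^3$, so the contribution of $A_j$ to the sum is $\lec(2^jr_\QQ)^{-1}$, which is summable in $j$ to $C/r_\QQ$.
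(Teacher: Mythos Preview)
Your proof is correct and, for the near-field pieces $I_1,I_2$ and the reduction of the far-field piece $I_3$ to the combinatorial sum
\[
\sum_{\QQ'\notin P^{(7)}}\frac{r_{\QQ'}^2}{|x_\QQ-x_{\QQ'}|^4}\lec \frac{1}{r_\QQ},
\]
it matches the paper's argument essentially line by line. The genuine difference is in how this combinatorial bound is established. The paper introduces an auxiliary fine mesh $\mathscr{D}$ of cubes of side $\sim 1/100$, arranged in concentric layers $A_n$ around $\QQ$; for each $\QQ'\notin P^{(7)}$ it selects a subfamily $\mathcal{E}_{\QQ'}\subset\mathscr{D}$ covering a scaled copy of $\QQ'$, so that $|\QQ'|^{2/3}\lec\sum_{R\in\mathcal{E}_{\QQ'}}1$ and $|x_\QQ-x_R|\leq|x_\QQ-x_{\QQ'}|$, then transfers the sum over $\QQ'$ to a sum over $R\in\mathscr{D}$ and closes via $\card A_n\lec(|\QQ|^{1/3}+n)^2$ and $\sum_{n\geq1}(a+n)^{-2}\lec a^{-1}$. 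Your dyadic-annulus approach is more direct: on each $A_j$ you combine $r_{\QQ'}\lec 2^j r_\QQ$ (from~\eqref{EQ01a}), the finite-overlap bound $\sum_{\QQ'\in A_j}|\QQ'|\lec(2^j r_\QQ)^3$, and the observation $r_{\QQ'}^2\leq r_{\QQ'}^3$ (since $r_{\QQ'}\geq1$) to get the $j$-th contribution $\lec(2^j r_\QQ)^{-1}$. This avoids the auxiliary mesh entirely and gives a shorter proof of Lemma~\ref{lem_pressure}; the paper's mesh construction, on the other hand, is reused verbatim later (see~\eqref{cubetrans} and its invocation in~\eqref{EQx}) when estimating the modified pressure of the linear system, where the kernel has weaker decay and a direct annulus argument would be less convenient.
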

Recall from~\eqref{alphabetas} that $\alpha_T = \esssup_{0<s<t} \| u \|_{M}^2$.\\


\begin{proof}[Proof of Theorem~\ref{thm_apriori}]
To prove \eqref{main_apr}, we fix  $\QQ\in \sc$ and use the   local energy inequality \eqref{localenergy} to obtain
	\begin{equation}\label{ap1}
	\begin{split}
		\frac{1}{2}\int|u(t)|^2\phi_{\QQ}+\int_0^t\int |\nabla u|^2\phi_\QQ
			&\leq
			\frac{1}{2}\int|u(0)|^2\phi_\QQ
			+\frac{1}{2}\int_0^t\int |u|^2\Delta\phi_\QQ
			\\ &\indeq
			+\frac{1}{2}\int_0^t\int(|u|^2+2p)(u\cdot\nabla\phi_\QQ).
	\end{split}
	\end{equation}
Next, we estimate the terms on the right-hand side of~\eqref{ap1}. Using \eqref{phiq} and conditions (i)--(ii) in Definition~\ref{Cr}, we see that
	\begin{equation}\label{ap2}
	\begin{split}
		\frac{1}{2}\int_0^t\int |u|^2\Delta\phi_\QQ
		&\lec_\sigma
		\frac{t}{|\QQ|^{2/3}}\esssup_{0<s<t}\sup_{\QQ'\subset \QQ^{(3)}}\int_{\QQ'}|u(s)|^2 \lec_{\eta}  t \alpha_t .
		\end{split}
	\end{equation}
For the last term on the right-hand side of \eqref{ap1}, we use \eqref{phiq} and the pressure \eqref{pressureest} to obtain
	\begin{equation}\label{ap3}
	\begin{split}
		&\frac{1}{2}\int_0^t\int(|u|^2+2p)(u\cdot\nabla\phi_\QQ)
		\lec
		\frac{1}{|\QQ|^{1/3}}\int_0^t\int_{\QQ^{(3)}}(|u|^3+|p-p_\QQ|^{3/2})
		\\&\indeq
		\lec_{\sigma,\eta }
t|\QQ|^{2/3}\alpha_t^{3/2}
+ \sup_{\QQ'\subset \QQ^{(7)}}\frac{1}{|\QQ'|^{1/3}}\int_0^t\int_{\QQ'}|u|^3
.
		\end{split}
	\end{equation}
Applying the Gagliardo-Nirenberg inequality for the ball $\QQ'$, we get
	\[
		\int_{\QQ'}|u|^3
		\lec \left(\int_{\QQ'}|u|^2\right)^{3/4}\left(\int_{\QQ'}|\nabla u|^2\right)^{3/4}+\frac{1}{|\QQ'|^{1/2}}\left(\int_{\QQ'}|u|^2\right)^{3/2}
	\]
for all $t\in[0,T]$.	
Thus,
   	 \begin{equation}\label{ap4}
   	 \begin{split}
    		\frac{1}{|\QQ'|^{1/3}}\int_0^t\int_{\QQ'}|u|^3
		&\lec  t^{1/4} |\QQ'|^{2/3}\left(\frac{1}{|\QQ'|^{2/3}}\esssup_{0<s<t}\int_{\QQ'}|u(s)|^2\right)^{3/4}\left(\frac{1}{|\QQ'|^{2/3}}\int_0^t\int_{\QQ'}|\nabla u|^2\right)^{3/4}
		\\&\indeq
		+t |\QQ'|^{1/6}\left(\frac{1}{|\QQ'|^{2/3}}\esssup_{0<s<t}\int_{\QQ'}|u(s)|^2\right)^{3/2}
		\\&\lec  t^{1/4}  |\QQ'|^{2/3}(\alpha_t+\beta_t)^{3/2}
  	  \end{split}
  	  \end{equation}
for every $\QQ' \in \mathscr{C}$ and $t\in [0,1]$,
where the constant depends on~$T$.
Applying this in~\eqref{ap3} and using the resulting inequality and  \eqref{ap2} in the local energy inequality \eqref{ap1}, we get
   	 \[
  	  \begin{split}
    		&
\frac12
              \int_{\QQ}|u(t)|^2+\int_0^t\int_{\QQ} |\nabla u|^2
		\\&\indeq
		\leq  \frac12 \int |u_0|^2 \phi_\QQ +	Ct\alpha_t+Ct|\QQ|^{2/3}\alpha_t^{3/2}+C (\alpha_t+\beta_t)^{3/2}\sup_{\QQ'\subset \QQ^{(3)}}t^{1/4}|\QQ'|^{2/3}
		.
		\end{split}
   	 \]
 Dividing by $|\QQ|^{2/3}$ and recalling Definition~\ref{Cr}(ii), we see that
  	  \begin{equation}
    	 \begin{split}
&    		\frac{1}{|\QQ|^{2/3}}\int_{\QQ}|u(t)|^2
    +\frac{1}{|\QQ|^{2/3}}\int_0^t\int_{\QQ} |\nabla u(x,s)|^2
		\\ &\indeq
		\lec_{\eta,\sigma } \| u_0 \|_{M}^2+ t\alpha_t+t\alpha_t^{3/2}+ t^{1/4}  (\alpha_t+\beta_t)^{3/2}.
	 \end{split}
   \label{EQ03}
	 \end{equation}
Therefore, $\alpha_t+\beta_t\lec_{\sigma,\eta } \| u (0) \|_{M}^2 + t(\alpha_t+\beta_t)+ t^{1/4}  (\alpha_t+\beta_t)^{3/2}$.

Now, we apply the barrier argument to~\eqref{EQ03}.
Since  $\alpha_t+\beta_t$ is continuous, by assumption, there exists $T>0$ such that
	\[
		\alpha_t+\beta_t\leq C\|u_0\|^2_{M}
		\comma
		[0,T)
		,
	\]
and $T$ is maximal, i.e., $\alpha_T+\beta_T=C\|u_0\|^2_{M}$,
where $C$ is sufficiently large and fixed until the end of the proof. Let
	\[
	T_0=\min\{(2C)^{-1},(4C\alpha_0^{1/2})^{-4}\}.
	\]
	If $T<T_0$, then $\alpha_T+\beta_T<C\alpha_0$, which is not possible. Therefore, $T_0\leq T$ and
	\[
		\sup_{0<t<T}\|u(t)\|_{M}^2+\sup_{\QQ\in\sc}\frac{1}{\QQ^{2/3}}\int_0^T\int_{\QQ}|\nabla u|^2\,dx\,dt
		\leq 
		2\|u_0\|_{M}^2,
	\]
concluding the proof of Theorem~\ref{thm_apriori}.
\end{proof}

\subsection{The pressure estimate}\label{sec_press}
Here we prove Lemma~\ref{lem_pressure}, that is, for a given $B\in \sc$, we show that
    \[ \begin{split}
          \frac{1}{|\QQ|^{1/3}}\int_0^T\int_{\QQ^{(3)}}|p-p_\QQ(t)|^{3/2}
          &\lec \sup_{\QQ'\subset \QQ^{(7)}}\frac{1}{|\QQ'|^{1/3}}\int_0^T\int_{\QQ'}|u|^3+
   CT|\QQ|^{2/3}\alpha_T^{3/2}
         \end{split}\]
whenever $p$ satisfies the pressure decomposition \eqref{localpress} for some $u \in M$.

\begin{proof}[Proof of Lemma~\ref{lem_pressure}]
First, recall the pressure decomposition \eqref{localpress},
\begin{equation}\label{localpress_repeat}
		\begin{split}
		p(x,t)-p_\QQ(t)
			 &=
     			 -|u(x,t)|^2 
      			+\text{p.v.}\int_{y\in \QQ^{(7)}}K_{ij}(x-y)(u_i(y,t)u_j(y,t))\d y 
      			\\&\indeq
      			+\int_{y\notin \QQ^{(7)}}\left(K_{ij}(x-y)-K_{ij}(x_\QQ-y)\right)(u_i(y,t)u_j(y,t))\d y\\
      			&=: \In(x,t) + \If(x,t).
			\end{split}
		\end{equation}
From the  Calder\'on-Zygmund inequality, we have
          \begin{equation}\label{In_bound}
             \frac{1}{|\QQ|^{1/3}}\int_0^T\int_{\QQ^{(3)}}|\In|^{3/2}
    \leq 
   \frac{C}{|\QQ|^{1/3}}\int_0^T\int_{\QQ^{(7)}}|u|^{3}.
          \end{equation}
For each $\QQ'\in P^{(7)}$, Definition~\ref{Cr}(i) gives that $|\QQ|^{1/3}   	 	\sim_{\eta }|\QQ'|^{1/3}$,  and so, since there are at most $\sigma^7$ cubes in  $P^{(7)}$, we deduce that
         \begin{equation}\label{inear}
          \begin{split}
            \frac{1}{|\QQ|^{1/3}}\int_0^T\int_{\QQ^{(7)}}|u|^{3}
     &\lec_\sigma 
     \sup_{\QQ'\in P^{(7)}}\frac{|\QQ'|^{1/3}}{|\QQ|^{1/3}}\frac{1}{|\QQ'|^{1/3}}\int_0^T\int_{\QQ'}|u|^{3}\lec_\eta \sup_{\QQ'\subset \QQ^{(7)}}\frac{1}{|\QQ'|^{1/3}}\int_0^T\int_{\QQ'}|u|^{3}.
           \end{split}
         \end{equation}
For $\If$ we observe that  for $y\notin \QQ^{(7)}$ and $x\in \QQ^{(3)}$ we have
         \begin{equation}\label{36}
           | K_{ij}(x-y)-K_{ij}(x_\QQ-y)|
    \leq 
   \frac{C|\QQ|^{1/3}}{|x-y|^4}.
         \end{equation}
(Recall that $K_{ij} (z) = \p_i \p_j (4\pi |z|)^{-1}$.)  Thus, using  \eqref{EQ01},
          \[
             \begin{split}
                 |\If(x,t)|&\lec |\QQ|^{1/3}\int_{\mathbb{R}^3\backslash \QQ^{(7)}}\frac{1}{|x-y|^4}|u(y,t)|^2\d y\\
                 &\lec |\QQ|^{1/3} \sum_{\QQ'\in \mathscr{C} \setminus P^{(7)}}\int_{\QQ'}\frac{1}{|x_\QQ-x_{\QQ'}|^4}|u(y,t)|^2\d y\\
                 &\lec
    		|\QQ|^{1/3} \alpha_T \sum_{\QQ'\in \mathscr{C} \setminus P^{(7)}}\frac{|\QQ'|^{q/3}}{|x_{\QQ}-x_{\QQ'}|^{4}}
   	      \end{split}
         \]
for all $x\in \QQ^{(3)}$. We show below that 
  \begin{align}
  \begin{split}
   \sum_{\QQ'\in \sc \setminus P^{(7)} }\frac{|\QQ'|^{q/3}}{|x_{\QQ}-x_{\QQ'}|^{4}}
   \lec
   |\QQ|^{-1/3}
   .
  \end{split}
   \label{EQ04}
  \end{align}
Given \eqref{EQ04}, we have that
\eqnb\label{EQ04aa}
|\If (x,t) | \lec \alpha_T \eqne
for all $x\in \QQ^{(3)}$, and so the claim \eqref{pressureest} follows by raising both sides of this inequality to the power~$3$ and integrating over $\QQ^{(3)}\times (0,T)$, as well as recalling~\eqref{In_bound}.

It remains to show~\eqref{EQ04}. To this end, denote by $\mathscr{D}$ the collection of cubes $R$ such that 
\eqnb\label{size_of_R}
| R |^{1/3} \in (1/200, 1/100]
\eqne
for every $R\in \mathscr{D}$, defined as follows.
 \begin{enumerate}
 \item Let $A_0$  be the largest cube that can be inscribed in  $\QQ$, as shown in Figure~\ref{fig_D}.
 \item For each $n\geq 1$, denote by $A_n$ the collection of cubes $R$ satisfying \eqref{size_of_R} which form a single layer of cubes of the same size around $T^{(n-1)}\coloneqq \bigcup_{k=0}^{n-1} \bigcup_{R'\in A_{k}} R'$, as sketched in Figure~\ref{fig_D}. Note that then $T^{(n)}=\bigcup_{k=0}^{n} \bigcup_{R'\in A_{k}} R'$ is a cube, so that each $A_n$ is well-defined.
 \item Let $\mathscr{D} \coloneqq \bigcup_{n\geq 1} A_n$.
 \end{enumerate}

\begin{figure}
\tikzset{every picture/.style={line width=0.75pt}} 

\begin{tikzpicture}[x=0.75pt,y=0.75pt,yscale=-0.75,xscale=0.75]

\draw  [fill={rgb, 255:red, 126; green, 211; blue, 33 }  ,fill opacity=1 ] (173,139.5) -- (227,139.5) -- (227,193.5) -- (173,193.5) -- cycle ;
\draw  [fill={rgb, 255:red, 248; green, 231; blue, 28 }  ,fill opacity=1 ] (146,139.5) -- (173,139.5) -- (173,166.5) -- (146,166.5) -- cycle ;
\draw  [fill={rgb, 255:red, 248; green, 231; blue, 28 }  ,fill opacity=1 ] (146,166.5) -- (173,166.5) -- (173,193.5) -- (146,193.5) -- cycle ;
\draw  [fill={rgb, 255:red, 248; green, 231; blue, 28 }  ,fill opacity=1 ] (173,112.5) -- (200,112.5) -- (200,139.5) -- (173,139.5) -- cycle ;
\draw  [fill={rgb, 255:red, 248; green, 231; blue, 28 }  ,fill opacity=1 ] (146,112.5) -- (173,112.5) -- (173,139.5) -- (146,139.5) -- cycle ;
\draw  [fill={rgb, 255:red, 248; green, 231; blue, 28 }  ,fill opacity=1 ] (200,112.5) -- (227,112.5) -- (227,139.5) -- (200,139.5) -- cycle ;
\draw  [fill={rgb, 255:red, 248; green, 231; blue, 28 }  ,fill opacity=1 ] (227,112.5) -- (254,112.5) -- (254,139.5) -- (227,139.5) -- cycle ;
\draw  [fill={rgb, 255:red, 248; green, 231; blue, 28 }  ,fill opacity=1 ] (227,139.5) -- (254,139.5) -- (254,166.5) -- (227,166.5) -- cycle ;
\draw  [fill={rgb, 255:red, 248; green, 231; blue, 28 }  ,fill opacity=1 ] (227,166.5) -- (254,166.5) -- (254,193.5) -- (227,193.5) -- cycle ;
\draw  [fill={rgb, 255:red, 248; green, 231; blue, 28 }  ,fill opacity=1 ] (173,193.5) -- (200,193.5) -- (200,220.5) -- (173,220.5) -- cycle ;
\draw  [fill={rgb, 255:red, 248; green, 231; blue, 28 }  ,fill opacity=1 ] (200,193.5) -- (227,193.5) -- (227,220.5) -- (200,220.5) -- cycle ;
\draw  [fill={rgb, 255:red, 248; green, 231; blue, 28 }  ,fill opacity=1 ] (227,193.5) -- (254,193.5) -- (254,220.5) -- (227,220.5) -- cycle ;
\draw  [fill={rgb, 255:red, 248; green, 231; blue, 28 }  ,fill opacity=1 ] (146,193.5) -- (173,193.5) -- (173,220.5) -- (146,220.5) -- cycle ;
\draw  [fill={rgb, 255:red, 245; green, 166; blue, 35 }  ,fill opacity=1 ] (119,139.5) -- (146,139.5) -- (146,166.5) -- (119,166.5) -- cycle ;
\draw  [fill={rgb, 255:red, 245; green, 166; blue, 35 }  ,fill opacity=1 ] (119,112.5) -- (146,112.5) -- (146,139.5) -- (119,139.5) -- cycle ;
\draw  [fill={rgb, 255:red, 245; green, 166; blue, 35 }  ,fill opacity=1 ] (119,85.5) -- (146,85.5) -- (146,112.5) -- (119,112.5) -- cycle ;
\draw  [fill={rgb, 255:red, 245; green, 166; blue, 35 }  ,fill opacity=1 ] (146,85.5) -- (173,85.5) -- (173,112.5) -- (146,112.5) -- cycle ;
\draw  [fill={rgb, 255:red, 245; green, 166; blue, 35 }  ,fill opacity=1 ] (173,85.5) -- (200,85.5) -- (200,112.5) -- (173,112.5) -- cycle ;
\draw  [fill={rgb, 255:red, 245; green, 166; blue, 35 }  ,fill opacity=1 ] (200,85.5) -- (227,85.5) -- (227,112.5) -- (200,112.5) -- cycle ;
\draw  [fill={rgb, 255:red, 245; green, 166; blue, 35 }  ,fill opacity=1 ] (227,85.5) -- (254,85.5) -- (254,112.5) -- (227,112.5) -- cycle ;
\draw  [fill={rgb, 255:red, 245; green, 166; blue, 35 }  ,fill opacity=1 ] (254,85.5) -- (281,85.5) -- (281,112.5) -- (254,112.5) -- cycle ;
\draw  [fill={rgb, 255:red, 245; green, 166; blue, 35 }  ,fill opacity=1 ] (254,112.5) -- (281,112.5) -- (281,139.5) -- (254,139.5) -- cycle ;
\draw  [fill={rgb, 255:red, 245; green, 166; blue, 35 }  ,fill opacity=1 ] (254,139.5) -- (281,139.5) -- (281,166.5) -- (254,166.5) -- cycle ;
\draw  [fill={rgb, 255:red, 245; green, 166; blue, 35 }  ,fill opacity=1 ] (254,166.5) -- (281,166.5) -- (281,193.5) -- (254,193.5) -- cycle ;
\draw  [fill={rgb, 255:red, 245; green, 166; blue, 35 }  ,fill opacity=1 ] (254,193.5) -- (281,193.5) -- (281,220.5) -- (254,220.5) -- cycle ;
\draw  [fill={rgb, 255:red, 245; green, 166; blue, 35 }  ,fill opacity=1 ] (254,220.5) -- (281,220.5) -- (281,247.5) -- (254,247.5) -- cycle ;
\draw  [fill={rgb, 255:red, 245; green, 166; blue, 35 }  ,fill opacity=1 ] (227,220.5) -- (254,220.5) -- (254,247.5) -- (227,247.5) -- cycle ;
\draw  [fill={rgb, 255:red, 245; green, 166; blue, 35 }  ,fill opacity=1 ] (200,220.5) -- (227,220.5) -- (227,247.5) -- (200,247.5) -- cycle ;
\draw  [fill={rgb, 255:red, 245; green, 166; blue, 35 }  ,fill opacity=1 ] (173,220.5) -- (200,220.5) -- (200,247.5) -- (173,247.5) -- cycle ;
\draw  [fill={rgb, 255:red, 245; green, 166; blue, 35 }  ,fill opacity=1 ] (146,220.5) -- (173,220.5) -- (173,247.5) -- (146,247.5) -- cycle ;
\draw  [fill={rgb, 255:red, 245; green, 166; blue, 35 }  ,fill opacity=1 ] (119,220.5) -- (146,220.5) -- (146,247.5) -- (119,247.5) -- cycle ;
\draw  [fill={rgb, 255:red, 245; green, 166; blue, 35 }  ,fill opacity=1 ] (119,166.5) -- (146,166.5) -- (146,193.5) -- (119,193.5) -- cycle ;
\draw  [fill={rgb, 255:red, 245; green, 166; blue, 35 }  ,fill opacity=1 ] (119,193.5) -- (146,193.5) -- (146,220.5) -- (119,220.5) -- cycle ;
\draw   (162.75,166.5) .. controls (162.75,145.93) and (179.43,129.25) .. (200,129.25) .. controls (220.57,129.25) and (237.25,145.93) .. (237.25,166.5) .. controls (237.25,187.07) and (220.57,203.75) .. (200,203.75) .. controls (179.43,203.75) and (162.75,187.07) .. (162.75,166.5) -- cycle ;
\draw    (506,98.5) -- (269.5,99) ;
\draw [shift={(267.5,99)}, rotate = 359.88] [color={rgb, 255:red, 0; green, 0; blue, 0 }  ][line width=0.75]    (10.93,-3.29) .. controls (6.95,-1.4) and (3.31,-0.3) .. (0,0) .. controls (3.31,0.3) and (6.95,1.4) .. (10.93,3.29)   ;
\draw    (358,207.5) -- (242.5,207.01) ;
\draw [shift={(240.5,207)}, rotate = 0.24] [color={rgb, 255:red, 0; green, 0; blue, 0 }  ][line width=0.75]    (10.93,-3.29) .. controls (6.95,-1.4) and (3.31,-0.3) .. (0,0) .. controls (3.31,0.3) and (6.95,1.4) .. (10.93,3.29)   ;
\draw    (50.75,156) -- (162.5,155.02) ;
\draw [shift={(164.5,155)}, rotate = 179.5] [color={rgb, 255:red, 0; green, 0; blue, 0 }  ][line width=0.75]    (10.93,-3.29) .. controls (6.95,-1.4) and (3.31,-0.3) .. (0,0) .. controls (3.31,0.3) and (6.95,1.4) .. (10.93,3.29)   ;
\draw    (72,180.5) -- (171,180.01) ;
\draw [shift={(173,180)}, rotate = 179.72] [color={rgb, 255:red, 0; green, 0; blue, 0 }  ][line width=0.75]    (10.93,-3.29) .. controls (6.95,-1.4) and (3.31,-0.3) .. (0,0) .. controls (3.31,0.3) and (6.95,1.4) .. (10.93,3.29)   ;
\draw  [fill={rgb, 255:red, 245; green, 166; blue, 35 }  ,fill opacity=1 ] (509,66.5) -- (536,66.5) -- (536,93.5) -- (509,93.5) -- cycle ;
\draw  [fill={rgb, 255:red, 245; green, 166; blue, 35 }  ,fill opacity=1 ] (509,39.5) -- (536,39.5) -- (536,66.5) -- (509,66.5) -- cycle ;
\draw  [fill={rgb, 255:red, 245; green, 166; blue, 35 }  ,fill opacity=1 ] (509,12.5) -- (536,12.5) -- (536,39.5) -- (509,39.5) -- cycle ;
\draw  [fill={rgb, 255:red, 245; green, 166; blue, 35 }  ,fill opacity=1 ] (536,12.5) -- (563,12.5) -- (563,39.5) -- (536,39.5) -- cycle ;
\draw  [fill={rgb, 255:red, 245; green, 166; blue, 35 }  ,fill opacity=1 ] (563,12.5) -- (590,12.5) -- (590,39.5) -- (563,39.5) -- cycle ;
\draw  [fill={rgb, 255:red, 245; green, 166; blue, 35 }  ,fill opacity=1 ] (590,12.5) -- (617,12.5) -- (617,39.5) -- (590,39.5) -- cycle ;
\draw  [fill={rgb, 255:red, 245; green, 166; blue, 35 }  ,fill opacity=1 ] (617,12.5) -- (644,12.5) -- (644,39.5) -- (617,39.5) -- cycle ;
\draw  [fill={rgb, 255:red, 245; green, 166; blue, 35 }  ,fill opacity=1 ] (644,12.5) -- (671,12.5) -- (671,39.5) -- (644,39.5) -- cycle ;
\draw  [fill={rgb, 255:red, 245; green, 166; blue, 35 }  ,fill opacity=1 ] (644,39.5) -- (671,39.5) -- (671,66.5) -- (644,66.5) -- cycle ;
\draw  [fill={rgb, 255:red, 245; green, 166; blue, 35 }  ,fill opacity=1 ] (644,66.5) -- (671,66.5) -- (671,93.5) -- (644,93.5) -- cycle ;
\draw  [fill={rgb, 255:red, 245; green, 166; blue, 35 }  ,fill opacity=1 ] (644,93.5) -- (671,93.5) -- (671,120.5) -- (644,120.5) -- cycle ;
\draw  [fill={rgb, 255:red, 245; green, 166; blue, 35 }  ,fill opacity=1 ] (644,120.5) -- (671,120.5) -- (671,147.5) -- (644,147.5) -- cycle ;
\draw  [fill={rgb, 255:red, 245; green, 166; blue, 35 }  ,fill opacity=1 ] (644,147.5) -- (671,147.5) -- (671,174.5) -- (644,174.5) -- cycle ;
\draw  [fill={rgb, 255:red, 245; green, 166; blue, 35 }  ,fill opacity=1 ] (617,147.5) -- (644,147.5) -- (644,174.5) -- (617,174.5) -- cycle ;
\draw  [fill={rgb, 255:red, 245; green, 166; blue, 35 }  ,fill opacity=1 ] (590,147.5) -- (617,147.5) -- (617,174.5) -- (590,174.5) -- cycle ;
\draw  [fill={rgb, 255:red, 245; green, 166; blue, 35 }  ,fill opacity=1 ] (563,147.5) -- (590,147.5) -- (590,174.5) -- (563,174.5) -- cycle ;
\draw  [fill={rgb, 255:red, 245; green, 166; blue, 35 }  ,fill opacity=1 ] (536,147.5) -- (563,147.5) -- (563,174.5) -- (536,174.5) -- cycle ;
\draw  [fill={rgb, 255:red, 245; green, 166; blue, 35 }  ,fill opacity=1 ] (509,147.5) -- (536,147.5) -- (536,174.5) -- (509,174.5) -- cycle ;
\draw  [fill={rgb, 255:red, 245; green, 166; blue, 35 }  ,fill opacity=1 ] (509,93.5) -- (536,93.5) -- (536,120.5) -- (509,120.5) -- cycle ;
\draw  [fill={rgb, 255:red, 245; green, 166; blue, 35 }  ,fill opacity=1 ] (509,120.5) -- (536,120.5) -- (536,147.5) -- (509,147.5) -- cycle ;
\draw  [fill={rgb, 255:red, 248; green, 231; blue, 28 }  ,fill opacity=1 ] (360,195.5) -- (387,195.5) -- (387,222.5) -- (360,222.5) -- cycle ;
\draw  [fill={rgb, 255:red, 248; green, 231; blue, 28 }  ,fill opacity=1 ] (360,222.5) -- (387,222.5) -- (387,249.5) -- (360,249.5) -- cycle ;
\draw  [fill={rgb, 255:red, 248; green, 231; blue, 28 }  ,fill opacity=1 ] (387,168.5) -- (414,168.5) -- (414,195.5) -- (387,195.5) -- cycle ;
\draw  [fill={rgb, 255:red, 248; green, 231; blue, 28 }  ,fill opacity=1 ] (360,168.5) -- (387,168.5) -- (387,195.5) -- (360,195.5) -- cycle ;
\draw  [fill={rgb, 255:red, 248; green, 231; blue, 28 }  ,fill opacity=1 ] (414,168.5) -- (441,168.5) -- (441,195.5) -- (414,195.5) -- cycle ;
\draw  [fill={rgb, 255:red, 248; green, 231; blue, 28 }  ,fill opacity=1 ] (441,168.5) -- (468,168.5) -- (468,195.5) -- (441,195.5) -- cycle ;
\draw  [fill={rgb, 255:red, 248; green, 231; blue, 28 }  ,fill opacity=1 ] (441,195.5) -- (468,195.5) -- (468,222.5) -- (441,222.5) -- cycle ;
\draw  [fill={rgb, 255:red, 248; green, 231; blue, 28 }  ,fill opacity=1 ] (441,222.5) -- (468,222.5) -- (468,249.5) -- (441,249.5) -- cycle ;
\draw  [fill={rgb, 255:red, 248; green, 231; blue, 28 }  ,fill opacity=1 ] (387,249.5) -- (414,249.5) -- (414,276.5) -- (387,276.5) -- cycle ;
\draw  [fill={rgb, 255:red, 248; green, 231; blue, 28 }  ,fill opacity=1 ] (414,249.5) -- (441,249.5) -- (441,276.5) -- (414,276.5) -- cycle ;
\draw  [fill={rgb, 255:red, 248; green, 231; blue, 28 }  ,fill opacity=1 ] (441,249.5) -- (468,249.5) -- (468,276.5) -- (441,276.5) -- cycle ;
\draw  [fill={rgb, 255:red, 248; green, 231; blue, 28 }  ,fill opacity=1 ] (360,249.5) -- (387,249.5) -- (387,276.5) -- (360,276.5) -- cycle ;

\draw (309,186.4) node [anchor=north west][inner sep=0.75pt]    {$A_{1}$};
\draw (380,77.4) node [anchor=north west][inner sep=0.75pt]    {$A_{2}$};
\draw (31,146.4) node [anchor=north west][inner sep=0.75pt]    {$\QQ$};
\draw (50,174.4) node [anchor=north west][inner sep=0.75pt]    {$A_{0}$};

\end{tikzpicture}
\captionsetup{width=.8\linewidth}
\captionof{figure}{A sketch of the construction of the set $\mathscr{D} = \cup_{n\geq 1} A_n$. Note that $\mathscr{D}$ is a family of cubes for a given $B\in \sc$.}\label{fig_D}
\end{figure}
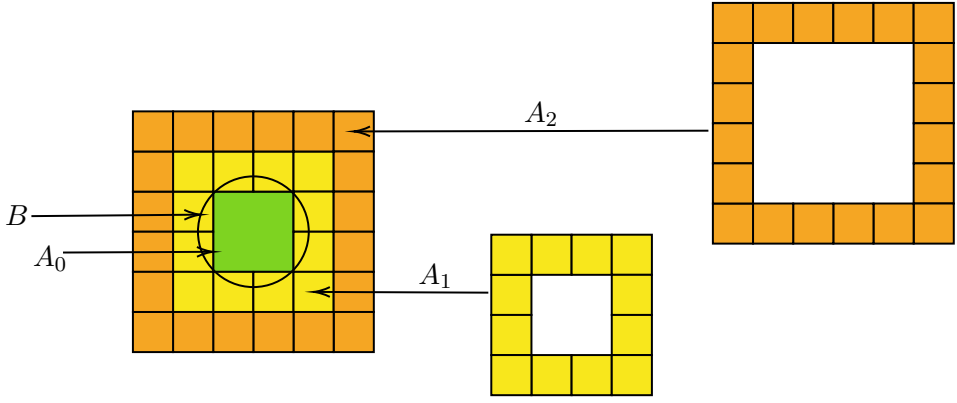

  We emphasize that $\mathscr{D}$ is chosen for a given $B\in \sc$. Note that 
 \eqnb\label{count_of_An}
 \card A_n \lec (|\QQ|^{1/3} + n )^2
 \eqne
for each $n\geq  1$, which is clear from the construction. Additionally, $\mathbb{R}^3\setminus \QQ^{(3)}$ is covered by cubes in~$\mathscr{D}$,
 as $\mathscr{D}$ covers $\mathbb{R}^{3}\setminus A_0$.

 Now, consider any $\QQ'\in \mathscr{C}$ such that $\QQ' \not\subset \QQ^{(3)}$, and denote  the intersection point of the line segment $[x_\QQ,x_{\QQ'}]$ with the sphere $\p \QQ'$ by $y$, denote the center of the line segment $[y,x_{\QQ'}]$ by $z$, denote the radius of $\QQ'$ by  $r' = 2|z-x_{\QQ'}|=2|z-y|\geq 1$, and let  \[
 \mathcal{E}_{\QQ'} \coloneqq \{ R\in \mathscr{D} \colon R \cap B(z,r'/4) \ne \emptyset \};
 \] 
see Figure~\ref{fig_cover} for a sketch. 
 \begin{center} 
\includegraphics[width=0.7\textwidth]{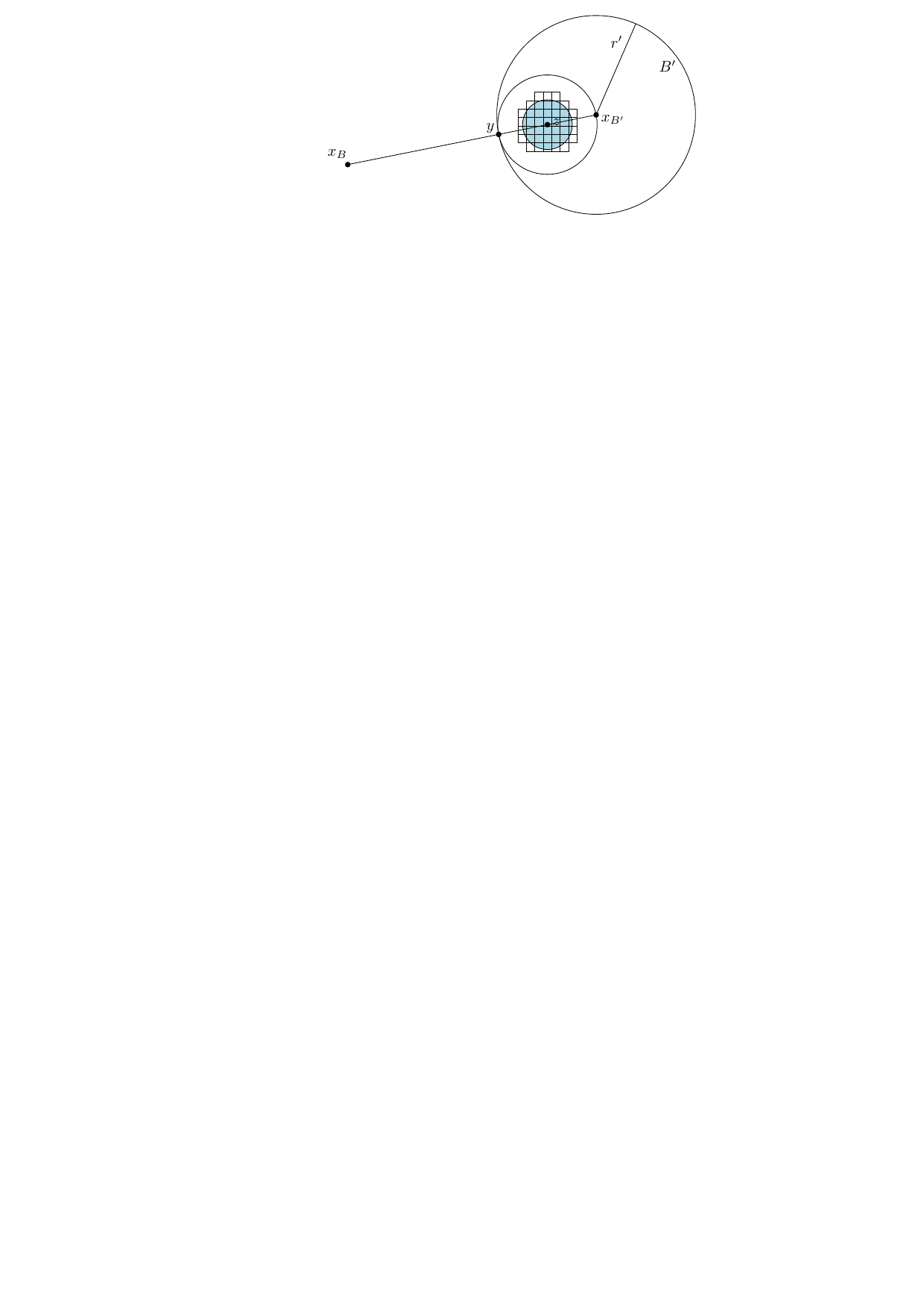}
\captionsetup{width=.8\linewidth}
  \captionof{figure}{A sketch of the collection $\mathcal{E}_{\QQ'}$. Note that $\mathcal{E}_{\QQ'}$ is a family of cubes for a given pair of $B,B'\in \sc$, such that $B'\not \subset B^{(3)}$.}\label{fig_cover}
  \end{center} 
Note that 
 \begin{equation}\label{rset}
 	\bigcup_{R\in \mathcal{E}_{\QQ'} } R \subset B(z,r'/2),
 \end{equation}
 which is clear from the sketch in Figure~\ref{fig_cover} and can be verified by writing
 \[
 	|b- z| \leq |b-c | + |c-z| 
	\leq \sqrt{3} |R|^{1/3} + r'/4 
	\leq \frac1{50} + r'/4
	 \leq r'/2
 \]
 for every $b\in R \in \mathcal{E}_{\QQ'}$, where $c$ denotes any point in $R\cap B(z,r'/4)$. In particular,
 \eqnb\label{010}
 	|x_\QQ - x_{\QQ'}| 
	\geq |x_\QQ - x_R | 
	\comma \text{ for every } R\in \mathcal{E}_{\QQ'}.
 \eqne
 Thus, since $B(z,r'/4)$ is four times smaller than $\QQ'$, and $B(z,r'/4)$ is covered by cubes from $\mathcal{E}_{\QQ'}$, we have $|\QQ'|\lec (r')^3\lec \sum_{R\in \mathcal{E}_{\QQ'}}1$.
Hence,
 \begin{equation}\label{cubetrans}
 \begin{split}
 \sum_{\QQ'\not\subset \QQ^{(3)}}\frac{|\QQ'|^{1/3}}{|x_\QQ - x_{\QQ'}|^4} 
 &\lesssim  
 \sum_{\QQ' \not\subset \QQ^{(3)}} \sum_{R\in \mathcal{E}_{\QQ'}} \frac{1}{|x_\QQ - x_{R}|^4} 
 \lesssim 
 \sigma \sum_{R\in \mathscr{D}} \frac{1}{|x_\QQ - x_{R}|^4} \\
 &\lec 
 \sigma \sum_{n\geq 1} \sum_{R\in A_n}  \frac{1}{\left( |\QQ|^{1/3} + n \right)^4 }\lec \sigma \sum_{n\geq 1}  \frac{1}{\left( |\QQ|^{1/3} + n \right)^2 } \lec \sigma |\QQ|^{-1/3},
\end{split} 
 \end{equation}
 proving \eqref{EQ04}, as required, where we used \eqref{010} in the first inequality; we recalled that each $\QQ' \in \sc$ intersects at most $\sigma$ other cubes (so that we will count at most $\sigma$ overlaps when counting cubes from all families $\mathcal{E}_{\QQ'}$, $\QQ'\in \sc$) in the second inequality. In the third inequality, we observed that 
 \[
  |x_\QQ-x_R|
   \geq
   \frac{|\QQ|^{1/3}}{2}
   +\left(\frac{n}{2}-\frac{1}{4}\right)|R|^{1/3}
   \gec |\QQ|^{1/3}+n
 \]
 for any $R\in A_n$, $n\geq 1$, and we used \eqref{count_of_An} in the fourth inequality.  For the last inequality  in~\eqref{cubetrans}, we used  the elementary fact 
 \eqnb\label{fact}
\sum_{n\geq 1} (a+n)^{-2} \lec a^{-1}
\eqne
for any $a\geq 1$,  which follows by comparing the left-hand side with $\int_a^\infty x^{-2}\,\d x$.
\end{proof}

\section{Construction of solutions}\label{sec_constr}
  Having derived the a~priori estimate \eqref{apr}, we now prove Theorem~\ref{thm_main}. 
  
We first consider the case of the assumption \eqref{ass_extra}. In order to construct the local energy solution $u$, we will use a non-divergence-free linear system, which we introduce in Section~\ref{sec_lin_sys} below. We will then address decay estimates on the divergence of its solution (Section~\ref{sec_lin_decay}), its pressure estimate (Section~\ref{sec_lin_p}), and an estimate of $\widetilde{u}$ in $M$  (Section~\ref{sec_lin_apr}). In Section~\ref{sec_pf_main}, we will apply it to conclude the proof of Theorem~\ref{thm_main} under the assumption~\eqref{ass_extra}.
Then, in Section~\ref{sec_ass2}, we prove Theorem~\ref{thm_main} under the assumption~\eqref{ass2}.

\subsection{A linear system with divergence}\label{sec_lin_sys}
We first address the well-posedness of the system
\begin{equation}\label{v_system}
\begin{split}
	\p_t  \widetilde{u}+(v\cdot\nabla)\widetilde{u}-\Delta \widetilde{u}+\nabla \widetilde{p}&=0,
	\\
   	\widetilde{u}(0)=\widetilde{u}_0,
\end{split}
\end{equation} 
where $v$, $\widetilde{u}_0$ are given and have compact supports in space. We note that the functions $v$ and $\widetilde{u}_0$ will be obtained
by means of cut-offs, and are thus \emph{not}~divergence-free. We define the pressure function
\eqnb\label{pressure_pn}
\begin{split}
	\widetilde{p}(x)
	&\coloneqq 
	-\frac{1}{3}\delta_{ij}(v_i(x)\widetilde{u}_j(x))
	+\text{p.v.}\int_{ \mathbb{R}^3}K_{ij}(x-y)(v_i(y)\widetilde{u}_j(y))\,\d y
	\\ & \indeq
	-\int_{ \mathbb{R}^3}L_{j}(x-y)\left(\left(\div v(y)\right)\widetilde{u}_j(y)\right)\,\d y
\end{split}
\eqne
for each $t$, which is omitted in the notation; above, $K_{ij}(y)=\partial_i\partial_j(4\pi|y|)^{-1}$ was defined in~\eqref{localpress}, and  
\[
	 L_{j}(y)\coloneqq \frac{1}{\omega_3}\frac{y_j}{|y|^3},
\]
denoting \(\displaystyle \omega_d \coloneqq |S^{d-1}| = \frac{2\pi^{d/2}}{\Gamma(d/2)}\). Thus, $\widetilde{p}$ satisfies
\begin{equation}\label{approxpress} 
	\Delta \widetilde{p}=\div\left((v\cdot\nabla) \widetilde{u}\right)
\end{equation} 
in~$\R^3$. Thanks to \eqref{approxpress}, $w \coloneqq \div\widetilde{u}$ obeys the heat equation,
\begin{equation}\label{div}
\begin{split}
	\p_t  w -\Delta w&=0,\\
   	 w(0)&=\div \widetilde{u}_0.
\end{split}
\end{equation}
We now sketch the existence and uniqueness of a solution to~\eqref{v_system} with $\widetilde p$ defined by~\eqref{pressure_pn}.

\begin{proposition}[Well-posedness in the energy class]
\label{prop:lin-v}
Let $T>0$, $\widetilde{u}_0\in L^2(\mathbb{R}^3)$, and
$v\in L^\infty(0,T;W^{1,\infty}(\mathbb{R}^3))$. Then, there exists a unique solution 
  \begin{equation}\label{XT_def}
   \widetilde{u}\in X_T\coloneqq C ([0,T];L^2)\cap L^2(0,T;H^1)   
  \end{equation}
to \eqref{v_system}
with \eqref{pressure_pn}
such that 
\eqnb\label{tildeu_L2est}
\|\widetilde{u}(t)\|_2^2+\int_0^t \|\nabla \widetilde{u}(s)\|_2^2\,\d s
\;\le\;\,\ee^{C t}\,\|\widetilde{u}_0\|_2^2
\eqne
for all $t\in [0,T]$, where $C$ depends only on $\|v\|_{L^\infty_tL^\infty_x}$ and
$\|\nabla v\|_{L^\infty_tL^\infty_x}$. Moreover, $u\in C([0,T];L^2)$, and
\[
\nabla \widetilde{p}
=\nabla(-\Delta)^{-1}\nabla\cdot\big(v\cdot\!\nabla \widetilde{u}\big)
\in L^2(0,T;H^{-1}).
\]
\end{proposition}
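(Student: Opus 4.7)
The approach is to reformulate \eqref{v_system} as a linear parabolic equation with bounded coefficients and then run a standard Galerkin scheme. First I would verify from \eqref{pressure_pn} the identity
\[
\nabla\widetilde{p}=\nabla(-\Delta)^{-1}\div\bigl((v\cdot\nabla)\widetilde{u}\bigr),\qquad \Delta\widetilde{p}=-\div\bigl((v\cdot\nabla)\widetilde{u}\bigr),
\]
via a direct computation using $K_{ij}=\p_i\p_j(4\pi|y|)^{-1}$, $L_j(y)=y_j/(\omega_3|y|^3)=-\p_j\bigl((4\pi|y|)^{-1}\bigr)$, and the algebraic identity $\p_i\p_j(v_i\widetilde{u}_j)=\div((v\cdot\nabla)\widetilde{u})+\p_j((\div v)\widetilde{u}_j)$. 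Consequently $(v\cdot\nabla)\widetilde{u}+\nabla\widetilde{p}=P[(v\cdot\nabla)\widetilde{u}]$, where $P=I-\nabla(-\Delta)^{-1}\div$ is the Leray projector, bounded on $L^2(\R^3)$ by Calder\'on--Zygmund theory. The system thus reads
\[
\p_t\widetilde{u}-\Delta\widetilde{u}+P\bigl[(v\cdot\nabla)\widetilde{u}\bigr]=0,\qquad \widetilde{u}(0)=\widetilde{u}_0,
\]
with the perturbation $P((v\cdot\nabla)\,\cdot\,)\colon H^1\to L^2$ bounded and of norm $\lec\|v\|_{L^\infty_tL^\infty_x}$.

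For existence I would use Faedo--Galerkin. Pick an orthonormal basis $\{w_k\}_{k\geq 1}\subset H^1(\R^3)$ of $L^2(\R^3)$ (for instance, Hermite functions), set $V_n=\mathrm{span}\{w_1,\dots,w_n\}$, and seek $\widetilde{u}^{(n)}(t)\in V_n$ satisfying the finite-dimensional projection of \eqref{v_system}, with $\widetilde{p}^{(n)}$ determined from $\widetilde{u}^{(n)}$ by \eqref{pressure_pn}. This is a linear ODE system in the coefficients of $\widetilde{u}^{(n)}$ with $L^\infty_t$ coefficients, and Carath\'eodory theory yields a unique absolutely continuous solution on $[0,T]$. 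Testing with $\widetilde{u}^{(n)}$ produces
\[
\tfrac12\tfrac{d}{dt}\|\widetilde{u}^{(n)}\|_2^2+\|\nabla\widetilde{u}^{(n)}\|_2^2=-\int(v\cdot\nabla)\widetilde{u}^{(n)}\cdot\widetilde{u}^{(n)}-\int\nabla\widetilde{p}^{(n)}\cdot\widetilde{u}^{(n)}.
\]
The first term equals $\tfrac12\int(\div v)|\widetilde{u}^{(n)}|^2$ and is bounded by $\tfrac12\|\nabla v\|_\infty\|\widetilde{u}^{(n)}\|_2^2$. For the second, the $L^2$-boundedness of $\nabla(-\Delta)^{-1}\div$ yields $\|\nabla\widetilde{p}^{(n)}\|_2\leq\|v\|_\infty\|\nabla\widetilde{u}^{(n)}\|_2$, and Cauchy--Schwarz followed by Young's inequality absorbs half of the dissipation on the left. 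Gronwall's inequality then gives~\eqref{tildeu_L2est} uniformly in $n$, with a constant depending only on $\|v\|_{L^\infty_tW^{1,\infty}_x}$.

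Passage to the limit is standard: the uniform bounds allow extraction of a subsequence with $\widetilde{u}^{(n)}\rightharpoonup\widetilde{u}$ weakly-$*$ in $L^\infty_tL^2_x$ and weakly in $L^2_tH^1_x$, and every term of the weak formulation is linear in $\widetilde{u}^{(n)}$, so one can pass to the limit. The resulting identity $\p_t\widetilde{u}=\Delta\widetilde{u}-P((v\cdot\nabla)\widetilde{u})\in L^2(0,T;H^{-1})$, combined with $\widetilde{u}\in L^2_tH^1_x$, yields $\widetilde{u}\in C([0,T];L^2)$ via the Lions--Magenes lemma and also produces $\nabla\widetilde{p}\in L^2(0,T;H^{-1})$ as claimed. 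Uniqueness is immediate from linearity: the difference of two solutions in $X_T$ with the same data solves the homogeneous problem, and the same energy estimate forces it to vanish. The only mildly subtle ingredient is the pressure bound in the energy estimate, which reduces to recognizing $\nabla\widetilde{p}$ as a Calder\'on--Zygmund-type operator acting on $(v\cdot\nabla)\widetilde{u}$; everything else is routine linear parabolic theory.
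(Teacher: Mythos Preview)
Your proof is correct but takes a different route from the paper. The paper rewrites the system as $\widetilde{u}_t-\Delta\widetilde{u}=-F[\widetilde{u}]$ with $F[\widetilde{u}]=v\cdot\nabla\widetilde{u}+\mathcal{T}(v\cdot\nabla\widetilde{u})$, where $\mathcal{T}=\nabla(-\Delta)^{-1}\div$, observes via $v\cdot\nabla a=\div(a\otimes v)-(\div v)a$ that $\|F[a]\|_{H^{-1}}\lesssim(\|v\|_\infty+\|\nabla v\|_\infty)\|a\|_2$, and then runs a \emph{contraction mapping} on the heat solution operator $\Phi$ on a short interval $[0,T_0]$ (with $CLT_0^{1/2}\leq 1/2$), iterating in time to reach $T$ and applying Gronwall. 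You instead run a \emph{Faedo--Galerkin scheme} with direct $L^2$ energy estimates, bounding $\|\nabla\widetilde{p}\|_2\lesssim\|v\|_\infty\|\nabla\widetilde{u}\|_2$ via Calder\'on--Zygmund and absorbing half the dissipation. Both are standard for linear parabolic equations with bounded coefficients; the paper's fixed-point argument is marginally more economical since it bypasses compactness and limit passage, while your Galerkin route is more hands-on and yields the energy bound directly at the approximate level without invoking the heat semigroup.

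One minor algebraic slip: your formulas $\nabla\widetilde{p}=\nabla(-\Delta)^{-1}\div((v\cdot\nabla)\widetilde{u})$ and $(v\cdot\nabla)\widetilde{u}+\nabla\widetilde{p}=P[(v\cdot\nabla)\widetilde{u}]$ are mutually inconsistent (with that sign you would get $(2I-P)$, not $P$; the paper's equation \eqref{approxpress} has the opposite sign). This is harmless for the proof, since both $P$ and $2I-P$ are bounded on $L^2$, but worth correcting.
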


\begin{proof}
Using $\mathcal T f:=\nabla(-\Delta)^{-1}\nabla\cdot f$, we
may rewrite \eqref{v_system} as
 $\widetilde{u}_t-\Delta \widetilde{u}= -F(\widetilde{u})$, where
\[
F[\widetilde{u}]\coloneqq  v\cdot\!\nabla \widetilde{u} + \mathcal T\big(v\cdot\!\nabla \widetilde{u}\big).
\]
Using $v\cdot\!\nabla \widetilde{u}=\operatorname{div}(\widetilde{u}\otimes v)-(\operatorname{div}v)\widetilde{u}$
and the boundedness of $\mathcal T$ on $H^{-1}$,
\begin{equation}
\|F[a]\|_{H^{-1}}
\lesssim \big(\|v\|_{L^\infty}+\|\nabla v\|_{L^\infty}\big)\,\|a\|_{2}.
   \label{EQ06}
     \end{equation}
Thus, denoting by $\Phi [a]$ the solution of the heat equation $\p_t b - \Delta b = -F[a]$ with initial data $\widetilde{u}_0$, we have, by a simple energy estimate and \eqref{EQ06} that
\eqnb\label{est_en_00}
\|\Phi(a_1 )-\Phi(a_2 )\|_{X_T}\leq C \| F[a_1 -a_2 ] \|_{L^2 ((0,T);H^{-1})} \leq C L T^{1/2}\|a_1 -a_2 \|_{X_T},
\eqne
where $L\coloneqq \|v\|_{L^\infty}+\|\nabla v\|_{L^\infty}$.
We fix any $T_0\in (0, T]$ such that $CLT_0^{1/2}\leq 1/2$. Then $\widetilde{u} \mapsto \Phi [\widetilde{u}]$ is a contraction on $X_{T_0}$, and an energy estimate similar to \eqref{est_en_00} gives that
\eqnb\label{est_en_01}
\sup_{[0,T_0]} \| \widetilde{u} \|_2^2 + \int_0^{T_0} \| \nabla \widetilde{u} \|^2_2 \leq \| u_0 \|_2^2 + C L \int_0^{T_0} \| \widetilde{u} \|_2^2.
\eqne
We now iterate the above argument over time intervals of length less than or equal $T_0$ to reach $[0,T]$. Iterating \eqref{est_en_01} we obtain a similar estimate valid on $[0,T]$, and an application of the Gronwall inequality gives~\eqref{tildeu_L2est}, as required. 
\end{proof}

We will use Proposition~\ref{prop:lin-v} with initial data of the form
\eqnb\label{u_0cutoff}
\widetilde{u}_0 = u_0 Z_n,
\eqne
where $u_0 \in L^2_{\rm loc} (\R^3)$, $n\geq 10$, and  $Z_n$ denotes any smooth radial non-increasing function such that 
\eqnb\label{choice_Zn}
Z_n(x)=\begin{cases}1 \qquad &|x|\leq n/2,\\
0& |x|\geq n-1,
\end{cases} \quad \text{ and }\quad \|\nabla Z_n\|_{\infty}\leq
\frac{4}{n}.
\eqne
Moreover, we will only consider $v\in L^\infty(0,T;W^{1,\infty}(\mathbb{R}^3))$ satisfying the following conditions for $t\in[0,T]$:
\begin{eqnarray}
v(x,t)=0\quad \text{ for }\quad |x|\geq n,&\label{v1condition}\\
\alpha_t [ v ] + \beta_t [ v ]\leq C_0 \|{u}_0\|^2_{M},&\label{v2condition}\\
\alpha_t [\div v ] + \beta_t [\div  v ]\leq C_0 n^{-2}\|{u}_0\|^2_{M},&\label{v3condition}
\end{eqnarray}
for some constant $C_0\geq 1$, whose value will not change from line to line (it is fixed in~\eqref{choice_C0} below), where we used the notation~\eqref{alphabeta}.

\subsection{Decay estimates on $w= \mathrm{div}  \,\widetilde{u}$}\label{sec_lin_decay}

Here we study the decay of $w=\mathrm{div}  \,\widetilde{u}$. Let $\QQ\in \sc$, and fix $\phi_{\QQ}$ be such that $\phi_{\QQ}=1$ on $\QQ$, $\phi_\QQ=0$ outside of $\QQ^{(3)}$ with
    \begin{equation}\label{phiqrep}
    	\|D^{k}\phi_\QQ\|_{L^{\infty}}\lec_{\sigma,\eta } \frac{C(k )}{|\QQ|^{k /3}}
    \end{equation}
for $k\geq 0$.
We first note that since $w$ solves the heat equation we have
\eqnb\label{w_ptwise}
\begin{split}
|w(x,t)| &= \frac{1}{(4\pi t)^{3/2}}
	\int {\ee^{-\frac{|x-y|^2}{4t}}}\,w(y,0)\,\d y
	\\& \lec t^{-3/2}\ee^{-\dist (\QQ, B(0,n) )^2/4t } \int_{B(0,n)} |w(\cdot ,0)|
	\\&\lec n^2 t^{-3/2}\ee^{-\dist (\QQ, B(0,n) )^2/4t } \| {u_0} \|_{M}
	 \end{split}
\eqne    
for all $x\in B \in \mathscr{C} \setminus  \mathscr{C}_n$, $n\geq 1$,
where we used \eqref{vcond1} and \eqref{u_0cutoff} to get
\[\begin{split}
\int_{B(0,n)} |w(\cdot ,0)| &\lec n^{3/2} \left( \sum_{B' \in \mathscr{C}_n} \int_{B'} | w(\cdot , 0)|^2 \right)^{1/2} \lec n^{3/2} \| w (0) \|_{M} \left( \sum_{B' \in \mathscr{C}_n} | B'|^{2/3} \right)^{1/2}\\
&\lec_\sigma n^{1/2} \| {u_0} \|_{M} |B(0,C_\eta n)|^{1/2} \lec_{\eta,\sigma} n^{2} \| {u_0} \|_{M}.
\end{split}
\]
Apart from \eqref{w_ptwise}, we provide an estimate for $w$ in~$M$. 

\begin{proposition}\label{prop_wbound}
If  $T\leq 1$, then the solution $w$ to \eqref{div} satisfies 
	\eqnb\label{wbound_est}
		\alpha_t [ w ] + \beta_t [ w ]	\lec_{\sigma,\eta } 
		n^{-2} {\|{u_0}\|^2_{M}}.
	\eqne
\end{proposition}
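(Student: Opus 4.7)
\textbf{Proof proposal for Proposition~\ref{prop_wbound}.} The plan is to combine a direct computation of the initial datum $w(0)$ with a localized energy identity for the heat equation \eqref{div} in the spirit of Theorem~\ref{thm_apriori}, closing everything through Gronwall on the time interval $[0,T]$, $T\leq 1$.

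First, I would identify $w(0)=\div\widetilde{u}_0$. Since $\widetilde{u}_0=u_0 Z_n$ (see~\eqref{u_0cutoff}) and $u_0$ is divergence-free, the product rule yields
\[
w(0)=\div(u_0 Z_n)=u_0\cdot\nabla Z_n,
\]
so $w(0)$ is supported in the annulus $\{n/2\leq |x|\leq n-1\}$ and satisfies $|w(0)|\leq (4/n)|u_0|$ by \eqref{choice_Zn}. Consequently, for any $B\in\sc$,
\[
\frac{1}{|B|^{2/3}}\int_{B}|w(0)|^2\leq \frac{16}{n^2}\frac{1}{|B|^{2/3}}\int_{B}|u_0|^2\leq \frac{16}{n^2}\|u_0\|_M^2,
\]
so $\|w(0)\|_M^2\lesssim n^{-2}\|u_0\|_M^2$.

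Next, I would derive a local energy identity. Fixing $B\in\sc$ and the cutoff $\phi_B$ from \eqref{phiqrep}, I multiply the heat equation $\partial_t w-\Delta w=0$ by $w\phi_B$ and integrate by parts (justified by the fast spatial decay of $w$, which solves the heat equation with compactly supported data), obtaining
\[
\frac12\int |w(t)|^2\phi_B+\int_0^t\!\!\int|\nabla w|^2\phi_B
=\frac12\int|w(0)|^2\phi_B+\frac12\int_0^t\!\!\int w^2\,\Delta\phi_B,
\]
after rewriting $\int w\nabla w\cdot\nabla \phi_B=-\tfrac12\int w^2\Delta\phi_B$. Using \eqref{phiqrep} with $k=2$, the fact that $\supp\phi_B\subset B^{(3)}$, together with Definition~\ref{Cr}(i)--(ii) (which guarantee that $B^{(3)}$ is covered by at most $\sigma^3$ balls from $\sc$, each of size comparable to $|B|$), and dividing by $|B|^{2/3}$, I would arrive at the Morrey-level estimate
\[
\frac{1}{|B|^{2/3}}\int_{B}|w(t)|^2+\frac{1}{|B|^{2/3}}\int_0^t\!\!\int_{B}|\nabla w|^2
\lesssim_{\sigma,\eta}\|w(0)\|_M^2+\int_0^t\alpha_s[w]\,\d s.
\]

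Finally, I would take the supremum over $B\in\sc$ on the left-hand side to obtain $\|w(t)\|_M^2\lesssim \|w(0)\|_M^2+\int_0^t\alpha_s[w]\,\d s$, and then the supremum over $t\in[0,T]$ gives $\alpha_T[w]\lesssim \|w(0)\|_M^2+\int_0^T\alpha_s[w]\,\d s$. Gronwall's inequality together with $T\leq 1$ yields $\alpha_T[w]\lesssim_{\sigma,\eta}\|w(0)\|_M^2\lesssim n^{-2}\|u_0\|_M^2$, and substituting this back into the displayed Morrey estimate and taking the supremum over $B$ produces the analogous bound for $\beta_T[w]$. The main (minor) technical point is the careful bookkeeping of the constants coming from condition~(i)--(ii) in Definition~\ref{Cr} when passing from integrals over $B^{(3)}$ to the Morrey norm; there is no essential obstacle here because the argument is purely linear and the only nonlinear-type term is the harmless Gronwall term on the right, which $T\leq 1$ absorbs.
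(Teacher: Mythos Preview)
Your proposal is correct and follows essentially the same approach as the paper: both derive the local energy identity \eqref{mainw}, divide by $|B|^{2/3}$, and pass to the supremum over $\sc$ to close an $M$-level estimate controlled by $\|w(0)\|_M^2\lesssim n^{-2}\|u_0\|_M^2$. The only cosmetic difference is that the paper absorbs the term $t\sup_{0<s<t}\|w(s)\|_M^2$ for small $t$ and iterates over $O_{\sigma,\eta}(1)$ subintervals of $[0,T]$, whereas you apply Gronwall directly on $[0,T]$; since $T\leq 1$, both are equivalent standard closures.
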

We emphasize that, since the equation for $w$ is independent of $v$ and $\widetilde{u}$, the implicit constant in~\eqref{wbound_est} depends only on $\sigma$,~$\eta$.
\begin{proof}
We multiply \eqref{div} by $w\, \phi_\QQ$ and integrate by parts in space to obtain
\begin{equation}\label{mainw}
	\frac12 \frac{\d}{\d t} \int |w|^2 \phi_\QQ  =\int  \Delta w \, w \, \phi_\QQ  = 
	-\int |\nabla w|^2 \phi_\QQ
   + \frac12 \int w^2 \,\Delta\phi_\QQ
.
\end{equation}
Integration in time and  \eqref{phiqrep} give 
\begin{equation*}
\begin{split}
	\frac12 \int |w(\cdot ,t)|^2 \phi_\QQ
	+ \int_0^t\!\!\int |\nabla w|^2 \phi_\QQ 
	&\leq
	\frac12 \int |w(\cdot ,0)|^2 \phi_\QQ
	+\frac{C}{|\QQ|^{2/3}}\int_0^t \int_{\QQ^{(3)}} |w|^2 .
\end{split}
\end{equation*}
Dividing by $|\QQ|^{2/3}$ and using the fact that each ball $\QQ$ intersects at most $\sigma$ balls, we get
\[
 	\frac{1}{2}\sup_{0<s<t} \|w(s)\|_{M}^2
	+\sup_{\QQ\in\sc}\frac{1}{\QQ^{2/3}}\int_0^t\int_{\QQ}|\nabla w|^2
	\lec_{\sigma,\eta }
	\|w(0)\|_{M}+t \sup_{0<s<t} \|w(s)\|_{M}^2.
\]
Thus, for sufficiently small $t$, depending on $\sigma$ and $\eta $ only, the last term on the right-hand side can be absorbed by the first term on the left-hand side to obtain
\[
 	\alpha_t [ w ] + \beta_t [ w ]
	\lec_{\sigma,\eta} \|w(0)\|_{M}.
\]
 The claim now follows by iterating the above bound to reach the final time $T$ (which can be obtained in at most $O_{\sigma,\eta }(1)$ steps), and by noting that the initial condition in~\eqref{div} gives that $\| w(0)\|_{M}
	\leq \|u_0\|_{M}\|\nabla Z_n\|_{\infty}
	\lec \|u_0\|_{M} / n$. 
\end{proof}

\subsection{The pressure estimate}\label{sec_lin_p}
We now move on to estimating the pressure.
Recalling the definition \eqref{pressure_pn} of $\widetilde{p}$, we shall use a decomposition for a given $\QQ\in \sc$, 
	\begin{equation}\label{localpressapprox}
		\begin{split}
\widetilde{p}(x,t)
		&-
		\widetilde{p}_{\QQ}(t)
		-\widetilde{q}_{\QQ}(t)
	=\widetilde{G}^{\QQ}_{ij}(v_i\widetilde{u}_j)
,
\end{split}
\end{equation}
where
	\begin{equation} \label{EQ08}
		\begin{split}
\widetilde{G}^{\QQ}_{ij}(v_i\widetilde{u}_j)                 
			 &\coloneqq 
     			 -\frac{1}{3}\delta_{ij}f(x)
      			+\text{p.v.}\int_{y\in \QQ^{(7)}}K_{ij}(x-y)(v_i(y,t)\widetilde{u}_j(y,t))\,\d y
      			\\&\indeq
      			+\int_{y\notin \QQ^{(7)}}\left(K_{ij}(x-y)-K_{ij}(x_\QQ-y)\right)(v_i(y,t)\widetilde{u}_j(y,t))\,\d y
			\\&\indeq
			- \int_{y\in \QQ^{(7)}}L_{j}(x-y)\left(\left(\div v(y,t)\right)\widetilde{u}_j(y,t)\right)\,\d y
			\\&\indeq
		 +  \int_{y\notin \QQ^{(7)}}\left(L_{j}(x_\QQ-y)-L_{j}(x-y)\right)\left(\left(\div v(y,t)\right)\widetilde{u}_j(y,t)\right)\,\d y
			\\&=:
			I_1+I_2+I_3+I_4.
			\end{split}
		\end{equation}
We will show that, under the assumption \eqref{ass_extra}, i.e., $|\QQ|^{1/3}\lesssim |x_\QQ|^{1-\epsilon}$ for some $0<\epsilon<1$, and with $n$ sufficiently large, we have
 \begin{equation}\label{pres}
      \begin{split}
          \frac{1}{|\QQ|^{1/3}}\int_0^t\int_{\QQ^{(3)}}|\widetilde{p}-\widetilde{p}_{\QQ}(t)-\widetilde{q}_{\QQ}(t)|^{3/2}
          &\lec_\epsilon \sqrt{C_0}
   t^{1/4}  |\QQ|^{2/3}\| {u}_0 \|_{M}^{3/2}(\alpha_t+\beta_t)^{3/4}
   ,
         \end{split}
  \end{equation}
where we used (and will continue to use in this and the following sections) the short-hand notation
     \eqnb\label{shorthand}
 \alpha_t \coloneqq \alpha_t [ \widetilde{u} ]  ,\qquad \beta_t \coloneqq  \beta_t [ \widetilde{u} ] .
 \eqne
\begin{proof}
To bound $I_1$, we use the Calder\'on-Zygmund inequality to obtain
  \[
  \begin{split}
  	\frac{1}{|\QQ|^{1/3}}\int_0^t\int_{\QQ^{(3)}}|I_1|^{3/2}
    	&\lec
   	\frac{1}{|\QQ|^{1/3}}\int_0^T\int_{\QQ^{(7)}}|v\widetilde{u}|^{3/2}
	\\ &
	\lec  \left(\frac{1}{|\QQ|^{1/3}}\int_0^T\int_{\QQ^{(7)}}|v|^{3}\right)^{1/2}\left(\frac{1}{|\QQ|^{1/3}}\int_0^T\int_{\QQ^{(7)}}|\widetilde{u}|^{3}\right)^{1/2}
.
   \end{split}
          \]
As in~\eqref{ap4}, the  Gagliardo-Nirenberg inequality implies that 
 \begin{equation}\label{gn1}
    	\frac{1}{|\QQ|^{1/3}}\int_0^t\int_{\QQ}|\widetilde{u}|^3
	\lec  t^{1/4}  |\QQ|^{2/3}(\alpha_t+\beta_t)^{3/2}.
  \end{equation}
Using the same idea for $v$ and recalling the assumption \eqref{v2condition} on $v$, we get
  \begin{equation}\label{gn2}
  \frac{1}{|\QQ|^{1/3}}\int_0^t\int_{\QQ}|v|^3
	\lec C_0^{3/2} t^{1/4}  |\QQ|^{2/3} \| {u_0} \|_{M}^{3} .
  \end{equation}
Therefore,
  \[
  	\frac{1}{|\QQ|^{1/3}}\int_0^t\int_{\QQ^{(3)}}|I_1|^{3/2}
	\lec C_0^{3/4}
	t^{1/4}  |\QQ|^{2/3}\| {u_0} \|_{M}^{3/2}(\alpha_t+\beta_t)^{3/4}.
  \]
For $I_2$, recall that for $y\notin \QQ^{(7)}$ and $x\in \QQ^{(3)}$ we have
         \[
           | K_{ij}(x-y)-K_{ij}(x_\QQ-y)|
    \leq 
   \frac{C|\QQ|^{1/3}}{|x-y|^4}.
         \]
Thus,
          \begin{equation}\label{sum}
             \begin{split}
                 |I_2(x)|&\leq C|\QQ|^{1/3}\int_{\mathbb{R}^3\backslash \QQ^{(7)}}\frac{1}{|x-y|^4}|v(y)\widetilde{u}(y)|\d y\\
                 &\lec 
    		|\QQ|^{1/3} \sum_{\QQ'\in \mathscr{C} \setminus P^{(7)}}\int_{\QQ'}\frac{1}{|x_\QQ-x_{\QQ'}|^4}|v(y) \widetilde{u}(y)|\d y\\
                 &\lec
                 \sqrt{C_0}
    		|\QQ|^{1/3} \| {u_0} \|_{M}\sqrt{\alpha_t }\sum_{\QQ'\in \mathscr{C} \setminus P^{(7)}}\frac{|\QQ'|^{2/3}}{|x_{\QQ}-x_{\QQ'}|^{4}}
        ,
   	      \end{split}
         \end{equation}
         where we used \eqref{v2condition} again, and we omitted $t$ in the notation. Repeating the arguments in~\eqref{cubetrans}, we obtain~\eqref{EQ04}, which then gives
         \[
         	|I_2(x)|\leq \sqrt{C_0} \|{u_0} \|_{M}\sqrt{\alpha_t}.
         \]
Therefore,
         \[
         	\frac{1}{|\QQ|^{1/3}}\int_0^t\int_{\QQ^{(3)}}|I_2|^{3/2}
		\lec
		C_0^{3/4}
		t|\QQ|^{2/3}  \|{u_0} \|^{3/2}_{M}\alpha^{3/4}_t.
         \]
 
 For $I_3$, using the fact that $\div v(y,s)=0$ for all $|y|\geq n$ (recall~\eqref{v1condition}), and the Young's inequality, we obtain
\begin{equation}\label{i3}
\begin{split}
\int_{\QQ^{(3)}} | I_3 |^{\frac32} &\leq \int \left| \int 1_{|x-y| \lec_\eta |\QQ|^{1/3}} 1_{y\in Q^{(7)}\cap B(0,n)} \frac{|(\div v(y)) \widetilde{u} (y)|  }{|x-y|^2}   \d y\right|^{\frac32} \d x \\
&\lec \left( \int_{|z|\lec_\eta |\QQ|^{1/3}} |z|^{-2} \right)^{\frac32}  \int_{\QQ^{(7)}} |(\div v) \widetilde{u}|^{\frac32} \\
&\lec |\QQ|^{1/2} \left( \int_{\QQ^{(7)}} |\div v|^3 \cdot  \int_{\QQ^{(7)}} |\widetilde{u}|^3 \right)^{\frac12}.
\end{split}
\end{equation}
Integration in time gives
\eqnb\label{temp010}
\frac{1}{|\QQ|^{1/3}} \int_0^t \int_{\QQ^{(3)}} | I_3 |^{\frac32} \lec |\QQ|^{1/2} \left( \frac{1}{|\QQ|^{1/3}} \int_0^t \int_{\QQ^{(7)}} |\div v|^3 \cdot \frac{1}{|\QQ|^{1/3}} \int_0^t \int_{\QQ^{(7)}} |\widetilde{u}|^3 \right)^{\frac12}.
\eqne
As in~\eqref{ap4}, the  Gagliardo-Nirenberg inequality implies that 
 \[
    	\frac{1}{|\QQ'|^{1/3}}\int_0^t\int_{\QQ'}|\widetilde{u}|^3
	\lec  t^{1/4}  |\QQ'|^{2/3}(\alpha_t+\beta_t)^{3/2}
  \]
for every $\QQ' \in P^{(7)}$. We use the same idea for $\div v$ and the assumption \eqref{v3condition} on $v$ to obtain
  \[
  \frac{1}{|\QQ'|^{1/3}}\int_0^t\int_{\QQ'}|\div v|^3
	\lec C_0^{3/2} t^{1/4}  |\QQ'|^{2/3} n^{-3} \| {u_0} \|_{M}^{3} 
  \]
  for $\QQ' \in P^{(7)}$. Thus, covering $\QQ^{(7)}$ by balls $\QQ' \in P^{(7)}$, \eqref{temp010} gives 
  \begin{equation}\label{i31}
 \frac{1}{|\QQ|^{1/3}} \int_0^t \int_{\QQ^{(3)}} | I_3 |^{\frac32} 
 \lec_{\sigma,\eta} C_0^{3/4} t^{1/4} |\QQ|^{7/6} n^{-3/2}\| \widetilde{u}_0 \|_{M}^{3/2} (\alpha_t+\beta_t)^{3/4}.
  \end{equation}
 Observe that the integral in the first inequality of \eqref{i3} is non-zero only for $\QQ$ such that $\QQ^{(7)}\cap B(0,n)\neq \emptyset$. Let $y\in \QQ^{(7)}\cap B(0,n)$ and $\QQ'\in\mathscr{C}$ such that $y\in\QQ'$. Since $\QQ'\cap\QQ^{(7)}\neq\emptyset$, we have $\QQ'\subset\QQ^{(7)}$. Additionally, since $y\in\QQ'$ and $|y|\leq n$, Proposition~\ref{prop_CR} gives that $|x_{B'} |\lec n$, and so also $|x_{\QQ}|\lec n$. Since $|\QQ|^{1/3}\lesssim|x_\QQ|^{1-\epsilon}$, we therefore obtain $|\QQ|^{1/3}\lesssim n^{1-\epsilon}$. Substituting this into \eqref{i31}, we get
 \begin{equation}\label{i3est}
 	 \frac{1}{|\QQ|^{1/3}} \int_0^t \int_{\QQ^{(3)}} | I_3 |^{\frac32} 
 \lec_{\sigma,\eta} {C_0}{3/4}  t^{1/4} |\QQ|^{2/3} n^{-3\epsilon/2}\| {u_0} \|_{M}^{3/2} (\alpha_t+\beta_t)^{3/4}.
 \end{equation}

Next, we estimate~$I_4$. Recalling \eqref{localpress},
\begin{equation}\label{i4def}
I_4 (x)
= -\frac{1}{\omega_d}\,\int_{(\QQ^{(7)})^c}\underbrace{\left(
     \frac{(x_j-y_j)}{|x-y|^{3}}
    -\frac{(x_{\QQ,j}-y_j)}{|x_\QQ-y|^{3}}
 \right)}_{=: A_j}
 ( \div v(y)) \widetilde{u}(y)\,\d y .
\end{equation}
Note that 
\begin{align*}
A_j
&= \frac{x_j-y_j}{|x-y|^3}-\frac{x_{\QQ,j}-y_j}{|x_\QQ-y|^3}
 = \left( \frac{x_j-x_{\QQ,j}}{b^3}\right) + (x_j-y_j)\left(\frac{1}{|x-y|^3}-\frac{1}{|x_\QQ-y|^3}\right),
\end{align*}
so that, since
\[
|x-x_\QQ| \lec_\eta |\QQ |^{1/3}, \qquad | x-y | \sim_\eta |x_\QQ - y |
,
\]
for $x\in \QQ^{(3)}$, $y \not \in \QQ^{(7)}$, we can use the Mean Value Theorem to get 
\eqnb\label{newdel}
\begin{split}
|A_j | &\lec_\eta \frac{|\QQ|^{1/3}}{|x-y|^3}  + |x-y| \frac{|x_\QQ - x |}{\min \{ |x-y |, |x_Q-y|\}^4} \lec_\eta  \frac{|\QQ|^{1/3}}{|x-y|^3} .
\end{split}
\eqne
Substituting~\eqref{newdel} into \eqref{i4def} and noting $\div v(y)=0$ for $|y|\geq n$, we get
\begin{equation}\label{EQx}
\begin{split}
|I_4 (x)|
&\lec |\QQ|^{1/3} \int_{(\QQ^{(7)})^c}\frac{|\div v(y)|\,|\widetilde{u}(y)|}{|x-y|^{3}}
\,\,\d y
\\ &
\lec_\eta |\QQ|^{1/3}\sum_{\QQ'\in \sc_{n} \setminus P^{(7)} } \frac{1}{|x_\QQ-x_{\QQ'}|^{3}} \int_{\QQ'} |\div v(y)||\widetilde{u}(y) |\,\d y 
\\ &
\lec  |\QQ|^{1/3}\|\div v\|_{M}\|\widetilde{u}\|_{M}\sum_{\QQ'\in \sc_{n}\setminus P^{(7)} } \frac{|\QQ'|^{2/3}}{|x_\QQ-x_{\QQ'}|^{3}} \\
&  \lec  |\QQ|^{1/3}\|\div v\|_{M}\|\widetilde{u}\|_{M}\underbrace{\sum_{\QQ'\in \sc_{n}\setminus P^{(7)} }\sum_{R\in\mathcal{E}_{\QQ'}} \frac{1}{|x_\QQ-x_{R}|^{3}}}_{=: L},
\end{split}
\end{equation}
where we also used \eqref{EQ01} in the second step, as well as the same idea as in  \eqref{cubetrans} in the last step. In order to estimate $L$, we first recall from~\eqref{vcond1} that 
\eqnb\label{vcond11}
|x_{\QQ'} |, r_{\QQ'} \leq C_\eta n
,
\eqne
for some $C_\eta >0$. We consider two cases dependent on the location of $B$ (recall $B\in \mathscr{C}$ is fixed above \eqref{localpressapprox}). \\

\noindent\texttt{Case  $|x_\QQ|\leq 3 C_\eta n$.} Then 
\[
| x_{\QQ}-  x_{R} | \leq | x_{\QQ}-  x_{\QQ'} | \leq | x_{\QQ} | +  |x_{\QQ'} |  \lec_\eta n 
,
\]
for each $R\in \mathcal{E}_{\QQ'}$, $\QQ'\in \mathscr{C}_n\setminus P^{(7)}$, so that $R \in \bigcup_{k=1}^K A_k$ for each such $R$, where $K\lec_\eta n$. As in~\eqref{cubetrans}, we have that $|x_B - x_R| \sim (|B|^{1/3} + k)$ for each $R\in A_k$, and so, recalling Definition~\ref{Cr}(i),
\[
L\lec_\sigma \sum_{k=1}^K \sum_{R\in A_k} (|B|^{1/3} + k )^{-3} \lec \log\left(1+\frac{K}{|\QQ|^{1/3}}\right) \lec \log n.
\]

\noindent\texttt{Case  $|x_\QQ|> 3C_\eta n$.} Then the triangle inequality and \eqref{vcond11} imply that
\[
|x_B - x_R | \geq |x_B - x_{B'} | - r_{B'} \geq | x_B | - |x_{B'}| - r_{B'} \geq \frac{|x_B |}{3}.
\]
Moreover, \eqref{vcond11} implies $B'\subset B(0,2C\eta n)$, and hence the same is true for each $R\in \mathcal{E}_{B'}$. Thus the number of all such $R$'s, as $B' \in \mathscr{C}_n\setminus P^{(7)}$ is bounded by $c_\eta n^3$, for some $c_\eta >0$. Hence,
\[
L \lec_{\sigma,\eta } \frac{n^3}{|x_B|^3} \lec_\eta 1.
\]
Applying these bounds in  \eqref{EQx}, we obtain
\[
|I_4 (x) | \lec_{\sigma, \eta } |B|^{1/3} \|\div v\|_{M}\|\widetilde{u}\|_{M}  \log n  \lec \sqrt{C_0} n^{-\epsilon/2} \|\widetilde{u}_0\|_{M}\alpha_t^{1/2}
\]
for all $x\in B^{(3)}$, where we also used the estimate \eqref{wbound_est} on $\| \div  v\|_{M}$. Integration over $(0,t)\times B^{(3)}$ thus gives 
\begin{equation}\label{i4est}
	\frac{1}{|\QQ|^{1/3}}\int_0^t\int_{\QQ^{(3)}} |I_4|^{3/2}
	\lec_\eta C_0^{3/4} t|\QQ|^{2/3}n^{-3\epsilon/4}\|{u_0}\|^{3/2}_{M}\alpha_t^{3/4},
\end{equation}
as required.
\end{proof}

\subsection{Estimate in $M$}\label{sec_lin_apr}

Here we establish an estimates in $M$ for solutions $\widetilde{u}$ of~\eqref{v_system}.  

\begin{theorem}\label{approxapriori}
Let $u_0\in M$ be divergence-free, $\widetilde{u}_0$ be given by the cutoff \eqref{u_0cutoff} on $B(0,n)$ of $u_0$ and $v\in L^\infty ([0,T]; W^{1,\infty} )$ satisfy assumptions \eqref{v1condition}--\eqref{v3condition} with a constant $C_0\geq 1$. Then, for any $t\in [0,T]$, where
\[
		T
		\coloneqq \frac{1}{C}\min
		\left\lbrace 1,\|u_0\|_{M}^{-4}\right\rbrace
	\]
and $C\geq 1$ is a sufficiently large constant, depending only on $\sigma,\eta$, and  sufficiently large $n$, the solution $\widetilde{u}$ to \eqref{v_system} on $[0,t]$ satisfies
 \begin{equation}\label{apr}
    	\alpha_t + \beta_t 
	\leq 
	C_1 \sqrt{C_0}\|{u_0}\|_{M}
,
    \end{equation}
for some constant $C_1=C_1(\sigma,\eta )>0$.
\end{theorem}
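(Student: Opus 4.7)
The plan is to run the same localized energy identity as in the proof of Theorem~\ref{thm_apriori}, but with two modifications arising from the non-vanishing divergence $w=\div\widetilde{u}$. Testing \eqref{v_system} by $\widetilde{u}\,\phi_B$ with the cutoff $\phi_B$ from \eqref{phiqrep}, the transport term produces the extra contribution $-\tfrac12\!\int|\widetilde{u}|^2\phi_B\div v$ (since $(v\cdot\nabla)\widetilde{u}\cdot\widetilde{u}=\tfrac12 v\cdot\nabla|\widetilde{u}|^2$), while the pressure term produces $-\!\int \widetilde{p}\,w\,\phi_B$ (since $\int\nabla\widetilde p\cdot\widetilde u\,\phi_B=-\!\int\widetilde p\,\div(\widetilde u\,\phi_B)$). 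The compatibility $\int w\,\phi_B+\int\widetilde u\cdot\nabla\phi_B=0$ ensures that the time-dependent normalizers $\widetilde p_B(t)+\widetilde q_B(t)$ still drop out once the two pressure pairings are combined, so \eqref{pres} applies directly to $\widetilde p-\widetilde p_B-\widetilde q_B$.

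Term-by-term estimation then parallels the proof of Theorem~\ref{thm_apriori}: the initial data integral is bounded by $C|B|^{2/3}\|u_0\|_M^2$ since $|\widetilde u_0|\le|u_0|$; the Laplacian-cutoff integral yields $Ct\alpha_t$; and the transport-cutoff integral $\int|\widetilde u|^2 v\cdot\nabla\phi_B$ is handled by $L^3$--$L^3$ H\"older together with Gagliardo--Nirenberg~\eqref{gn1} on $\widetilde u$ and \eqref{gn2} on $v$, giving $\lec\sqrt{C_0}\,t^{1/4}\|u_0\|_M(\alpha_t+\beta_t)$. The new $\div v$-term is treated identically but uses the stronger bound~\eqref{v3condition} in place of \eqref{v2condition}; since $\div v$ is supported in $B(0,n)$, only balls whose $B^{(3)}$ meets $B(0,n)$ contribute, and Proposition~\ref{prop_CR} combined with~\eqref{ass_extra} forces $|B|^{1/3}\lec n^{1-\epsilon}$, converting the gained $n^{-1}$ into an $n^{-\epsilon}$-small remainder. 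The two pressure pairings $\int(\widetilde p-\widetilde p_B-\widetilde q_B)\widetilde u\cdot\nabla\phi_B$ and $\int(\widetilde p-\widetilde p_B-\widetilde q_B)w\,\phi_B$ are then controlled by H\"older combined with~\eqref{pres}; the $w$-pairing additionally invokes Proposition~\ref{prop_wbound} and, for balls with $B\notin\sc_n$, the pointwise Gaussian tail~\eqref{w_ptwise}, producing another $n^{-1}$ gain which becomes $n^{-\epsilon}$-small under~\eqref{ass_extra}.

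Collecting everything, dividing by $|B|^{2/3}$ and taking the supremum over $B\in\sc$, one obtains
\[
\alpha_t+\beta_t \;\le\; C\|u_0\|_M^2 + Ct\alpha_t + C\sqrt{C_0}\,t^{1/4}\|u_0\|_M(\alpha_t+\beta_t) + C\sqrt{C_0}\,t^{1/4}\|u_0\|_M^{3/2}(\alpha_t+\beta_t)^{3/4} + R_n,
\]
where $R_n$ collects $O(n^{-\epsilon})$ remainders absorbed for $n$ large. Young's inequality applied to the $(\alpha_t+\beta_t)^{3/4}$ contribution together with the smallness of $t^{1/4}\|u_0\|_M$ on $[0,T]$ reduces the inequality to one amenable to the same continuity/barrier argument as in the proof of Theorem~\ref{thm_apriori}, now with threshold $C_1\sqrt{C_0}\|u_0\|_M$ in place of $C\|u_0\|_M^2$; this delivers~\eqref{apr} once $C$ is chosen sufficiently large in terms of $\sigma,\eta$.

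The main technical obstacle is the linear-in-$(\alpha_t+\beta_t)$ term carrying the prefactor $\sqrt{C_0}\,t^{1/4}\|u_0\|_M$: because $C$ (in the definition of $T$) is required to depend only on $\sigma,\eta$, smallness of $t$ alone cannot kill the $\sqrt{C_0}$, so one must close the bound by pairing this term against the barrier value $C_1\sqrt{C_0}\|u_0\|_M$ so that the effective Lipschitz constant is strictly below $1$. A secondary bookkeeping point is that, because $\widetilde p$ is genuinely non-local, the estimates for balls $B\notin\sc_n$ rely on the Gaussian tail~\eqref{w_ptwise} rather than the local vanishing of $v$.
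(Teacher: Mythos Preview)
Your proposal is correct and follows essentially the same route as the paper: test \eqref{v_system} against $\widetilde u\,\phi_B$, pick up the two extra terms from $\div v$ and $w=\div\widetilde u$, bound the $\div v$-term via \eqref{v3condition}, Proposition~\ref{prop_CR} and \eqref{ass_extra} to produce an $n^{-\epsilon}$ remainder, control the pressure pairings via \eqref{pres} (together with Proposition~\ref{prop_wbound} and the Gaussian tail \eqref{w_ptwise} in the far case), and close by a barrier argument. Your final inequality carries an additional $(\alpha_t+\beta_t)^{3/4}$ contribution that the paper folds into the linear term, but this is harmless; and your flagged obstacle---that $t^{1/4}\|u_0\|_M$ alone cannot neutralize the $\sqrt{C_0}$ prefactor when $C$ depends only on $\sigma,\eta$---is real and is present in the paper's argument as well, resolved in practice by the fact that $C_0$ is ultimately fixed as $C_1^2C_2^2$ in~\eqref{choice_C0}.
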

Recall \eqref{shorthand} that in this section we use the shorthand notation  $ \alpha_t \equiv \alpha_t [ \widetilde{u} ]$, $\beta_t \equiv  \beta_t [ \widetilde{u} ]$.

 \begin{proof}[Proof of Theorem~\ref{approxapriori}] 	Fix $\QQ\in \sc$.  Taking the inner product of the first equation in~\eqref{v_system} with $\widetilde{u}\,\phi_\QQ$ and noting that $\nabla \widetilde{p}(x,t)=\nabla(\widetilde{p}(x,t)-\widetilde{p}_{\QQ}(t)-\widetilde{q}_{\QQ}(t))$, we obtain the local energy equality
	\begin{equation}\label{app1}
	\begin{split}
&
		\frac{1}{2}\int|\widetilde{u}(t)|^2\phi_{\QQ}+\int_0^t\int |\nabla \widetilde{u}|^2\phi_\QQ
\\&\indeq
			=
			\frac{1}{2}\int|\widetilde{u}_0|^2\phi_\QQ
			+\frac{1}{2}\int_0^t\int |\widetilde{u}|^2\Delta\phi_\QQ
			+\frac{1}{2}\int_0^t\int|\widetilde{u}|^2(\div v)\phi_\QQ
			\\  &\indeq\indeq
			+\frac{1}{2}\int_0^t\int|\widetilde{u}|^2(v\cdot\nabla\phi_\QQ)
			+\int_0^t\int (\widetilde{p}-\widetilde{p}_{\QQ}-\widetilde{q}_{\QQ})(u\cdot\nabla\phi_\QQ)
			\\  &\indeq\indeq
			+\int_0^t\int (\widetilde{p}-\widetilde{p}_{\QQ}-\widetilde{q}_{\QQ})(\div \widetilde{u})\phi_\QQ
,
	\end{split}
	\end{equation}
where $\phi_\QQ$ is defined above~\eqref{phiq}.	We bound the necessary terms on the right-hand side of~\eqref{app1}. Using \eqref{phiq} and the conditions (i)--(ii) of Definition~\ref{Cr}, we see that
	\[
	\begin{split}
		\frac{1}{2}\int_0^t\int |\widetilde{u}|^2\Delta\phi_\QQ
		&\lec_\sigma
		\frac{t}{|\QQ|^{2/3}}\esssup_{0<s<t}\sup_{\QQ'\subset \QQ^{(3)}}\int_{\QQ'}|\widetilde{u}(s)|^2 \lec_{\eta}  t \alpha_t .
		\end{split}
	\]
Next, we estimate the term with~$\div v$. Applying the Gagliardo-Nirenberg inequality as in~\eqref{gn1}, \eqref{gn2}, and recalling the assumption \eqref{v3condition} on $v$, we obtain
	\[
	\begin{split}
		&\frac{1}{2}\int_0^t\int (\div v)|\widetilde{u}|^2\phi_\QQ
		\\ &\lec
		|\QQ|^{1/3}\left(\sup_{\QQ'\subset \QQ^{(7)}}\frac{1}{|\QQ'|^{1/3}}\int_0^t\int_{\QQ'}|\div v|^3\right)^{1/3}\left(\sup_{\QQ'\subset \QQ^{(7)}}\frac{1}{|\QQ'|^{1/3}}\int_0^t\int_{\QQ'}|\widetilde{u}|^3\right)^{2/3}
		\\ &
		\lec_{\sigma,\eta } \sqrt{C_0}
		t^{1/4}n^{-1}|\QQ|\|{u}_0\|_{M}(\alpha_t+\beta_t)
.
		\end{split}
	\]
Observe that since $v$ is zero outside $B(0,n)$, we have $\QQ'\in\sc_n$ in the above inequality. Thus, Proposition~\ref{prop_CR} gives that $|x_{B'}| \lec n$ and thus also $|x_B|\lec n$. Therefore, recalling the assumption \eqref{ass_extra}, we have  $|\QQ|^{1/3}\lec n^{1-\epsilon}$, giving us
\eqnb\label{div_term_apr}
	\begin{split}
		&\frac{1}{2}\int_0^t\int (\div v)|\widetilde{u}|^2\phi_\QQ
		\lec_{\sigma,\eta } \sqrt{C_0}
		t^{1/4}n^{-\epsilon}|\QQ|^{2/3}\|{u}_0\|_{M}(\alpha_t+\beta_t)
		.
		\end{split}
	\eqne
Next, we estimate the fourth term on the right hand side of~\eqref{app1}. Applying the Gagliardo-Nirenberg inequality as in~\eqref{gn1},\eqref{gn2} and using the assumption \eqref{v2condition} on $v$, we obtain
\[
\begin{split}
&	\frac{1}{2}\int_0^t\int|\widetilde{u}|^2|v\cdot\nabla\phi_\QQ|
	\lec
	\frac{1}{|\QQ|^{1/3}}\int_0^t\int_{\QQ^{(3)}}|\widetilde{u}|^2|v|
	\\ &\indeq
	\lec \left(\sup_{\QQ'\subset \QQ^{(7)}}\frac{1}{|\QQ'|^{1/3}}\int_0^t\int_{\QQ'}| v|^3\right)^{1/3}\left(\sup_{\QQ'\subset \QQ^{(7)}}\frac{1}{|\QQ'|^{1/3}}\int_0^t\int_{\QQ'}|\widetilde{u}|^3\right)^{2/3}
	\\ &\indeq
	\lec \sqrt{C_0} t^{1/4}|\QQ|^{2/3}\|{u}_0\|_{M}(\alpha_t+\beta_t).
\end{split}
\]
For the pressure term in~\eqref{app1} involving $w=\div\widetilde u$, we recall \eqref{w_ptwise} that $w$ admits the Gaussian decay in space for $|x| > n$. We thus consider two cases of the location of $B$, depending whether or not $B^{(3)}$ intersects $B(0,2n)$.\\

\noindent\emph{Case $B^{(3)} \cap B(0,2n) \not = \emptyset$.} In other words $B' \in \mathscr{C}_{2n}$ for some $B' \in P^{(3)}$. \\\

By \eqref{vcond1} this means that $|x_{B'}|,r_{B'} \lec_\eta n$, and thus also $|x_B| \lec_\eta n$. Using the pressure estimate~\eqref{localpressapprox}, the $M$ bound on $w$, and the Gagliardo-Nirenberg inequality as in~\eqref{gn1}  to obtain
\[
\begin{split}
&	\left| \int_0^t\int (\widetilde{p}-\widetilde{p}_{\QQ}-\widetilde{q}_{\QQ})w \phi_\QQ\right|
\\&\indeq
	\lec |\QQ|^{1/3}\left(\frac{1}{|\QQ|^{1/3}}\int_0^t\int_{\QQ^{(3)}}|\widetilde{p}-\widetilde{p}_{\QQ}-\widetilde{q}_{\QQ}|^{3/2}\right)^{2/3}\left(\frac{1}{|\QQ|^{1/3}}\int_0^t\int_{\QQ^{(3)}}|w|^3\right)^{1/3} \\
	&\indeq\lec \sqrt{C_0}t^{1/4} n^{-1} |\QQ|\| {u_0} \|^2_{M}(1+n^{-3\epsilon/2})^{2/3}(\alpha_t+\beta_t)^{1/2}\\
	&\indeq\lec \sqrt{C_0} t^{1/4} n^{-\epsilon } \| {u_0} \|^2_{M}(\alpha_t+\beta_t)^{1/2},
\end{split}
\]
where, in the last line, we also used the assumption \eqref{ass_extra} that $|B| \lec |x_B |^{1-\epsilon}$.\\

\noindent\emph{Case $B^{(3)} \cap B(0,2n) = \emptyset$.} We have $|x_B| \geq 2n$ and, recalling \eqref{EQ01cons},
\[
\dist (B^{(3)},B(0,n)) \gec_\eta  \dist (x_B , B(0,n)) \geq |x_B | - n \geq |x_B |/2
,
\]
we get
 $\dist (B^{(3)}, B(0,n))\geq n$, and so the pointwise estimate \eqref{w_ptwise} on $w$ gives
 \[
 \begin{split}
	&\left| \int_0^t\int (\widetilde{p}-\widetilde{p}_{\QQ}-\widetilde{q}_{\QQ})w \phi_\QQ \right|
	\lec  n^2 t^{-3/2}\ee^{-|x_B|^2/C_\eta t} \| {u_0} \|_{M} \int_0^t \int_{B^{(3)}} |\widetilde{p}-\widetilde{p}_{\QQ}-\widetilde{q}_{\QQ}|\\
	&\indeq\lec n^2t^{-7/6}\ee^{-|x_B|^2/C_\eta t} \| {u_0} \|_{M} |\QQ |^{5/9}\left( \frac{1}{|B|^{1/3}} \int_0^t \int_{B^{(3)}} |\widetilde{p}-\widetilde{p}_{\QQ}-\widetilde{q}_{\QQ}|^{3/2} \right)^{\frac23}
    \\&\indeq
	\lec\sqrt{C_0}  n^2t^{-7/6}\ee^{-|x_B|^2/C_\eta t} \| {u_0} \|_{M} |\QQ |^{5/9}\left( t^{1/4}  |\QQ|^{2/3}\| {u_0} \|_{M}^{3/2}(\alpha_t+\beta_t)^{3/4} \right)^{\frac23}
  ,
	\end{split}
	\]
from where
 \[
 \begin{split}
	&\left| \int_0^t\int (\widetilde{p}-\widetilde{p}_{\QQ}-\widetilde{q}_{\QQ})w \phi_\QQ \right|
	\lec \sqrt{C_0} n^2 t^{-1}\ee^{-|x_B|^2/C_\eta t} |\QQ | \| {u_0} \|_{M}^2(\alpha_t+\beta_t)^{1/2 } \\
	&\indeq\lec_\eta \sqrt{C_0} n^{-\epsilon } |B|^{2/3} |x_B|^{3 } t^{-1}  \ee^{-|x_B|^2/C_\eta t}   \| {u_0} \|_{M}^2(\alpha_t+\beta_t)^{1/2 } \\
	&\indeq\lec_\eta \sqrt{C_0} n^{-\epsilon }|B|^{2/3}   t^{1/2}   \| {u_0} \|_{M}^2(\alpha_t+\beta_t)^{1/2 }
 .
\end{split}
\]
Lastly, using \eqref{localpressapprox}, the Gagliardo-Nirenberg inequality as in~\eqref{gn1} and noting $n\geq 1$, we obtain
\[
\begin{split}
&
	\int_0^t\int |\widetilde{p}-\widetilde{p}_{\QQ}-\widetilde{q}_{\QQ}||\widetilde{u}||\nabla \phi_\QQ|
  \\&\indeq
	\lec
	\left(\frac{1}{|\QQ|^{1/3}}\int_0^t\int_{\QQ^{(3)}}|\widetilde{p}-\widetilde{p}_{\QQ}-\widetilde{q}_{\QQ}|^{3/2}\right)^{2/3}\left(\frac{1}{|\QQ'|^{1/3}}\int_0^t\int_{\QQ'}|\widetilde{u}|^3\right)^{1/3}
	\\ &\indeq
	\lec\sqrt{C_0} t^{1/4}  |\QQ|^{2/3} \| {u_0} \|_{M}(\alpha_t+\beta_t).   
\end{split}
\]
Combining all the estimates, we obtain
   	 \[
  	  \begin{split}
    		&
\frac12
              \int_{\QQ}|\widetilde{u}(t)|^2+\int_0^t\int_{\QQ} |\nabla \widetilde{u}|^2
		\\ &
		\indeq\lec  \frac12 \int |\widetilde{u}_0|^2 \phi_\QQ 
		+t\alpha_t
		+\sqrt{C_0}t^{1/4}|\QQ|^{2/3}\|{u_0}\|_{M}(\alpha_t+\beta_t)
		\\ &\indeq\indeq
		+ \sqrt{C_0}t^{1/4} n^{-1} |\QQ|^{4/9}\exp\left(-n^2/4Ct\right)\| {u_0} \|^2_{M}(\alpha_t+\beta_t)^{1/2}.	
		\end{split}
   	 \]
 Dividing by $|\QQ|^{2/3}$ and recalling that $|\QQ|\geq 1$, we see that
  	  \begin{equation}\label{teem}
  	  \begin{split}
             \frac{1}{|\QQ|^{2/3}} \int_{\QQ}|\widetilde{u}(t)|^2
             &+\frac{1}{|\QQ|^{2/3}}\int_0^t\int_{\QQ} |\nabla \widetilde{u}|^2
				\lec \alpha_0
		+t\alpha_t
		+\sqrt{C_0} t^{1/4}\|{u_0}\|_{M}(\alpha_t+\beta_t)
				.
		\end{split}
   	 \end{equation}
Thus, since $\alpha_t + \beta_t $ is continuous in $t$, we can apply a  barrier argument to obtain that 
\[
	\alpha_t+\beta_t\lec_{\sigma,\eta } \sqrt{C_0} \|{u_0}  \|_{M}^2
\]
for $t\in [0,T]$, where $T\coloneqq \min \{ 1, \|{u_0}  \|_{M}^{-4} \}/C$, and $C=C(\sigma,\eta )>0$ is the implicit constant from~\eqref{teem}. 
 \end{proof}

\subsection{A regularization lemma}\label{sec_reg_lemma}
For $\gamma \in (0,1]$ we denote by $\rho_\gamma(x) \coloneqq \gamma^{-3}\rho (x/\gamma )$ a standard mollifier, where $\rho \in C_c^\infty (B(0,1);\R^+)$ is such that $\int \rho=1$.
 
\begin{lemma}\label{lem_reg} Suppose that 
there exists a constant $C_2 = C_2 (\sigma,\eta )\geq 1$ with the following property: if  $T$ is given by Theorem~\ref{approxapriori}, $t\in (0,T]$, $n\geq 3$ is sufficiently large (as in Theorem~\ref{approxapriori}), $v$ satisfies conditions \eqref{v1condition}--\eqref{v3condition} with a constant $C_0\geq 1$, and $\widetilde{u}$ is a solution to the linear system \eqref{v_system} on $[0,t]$ with initial data $\widetilde{u}_0$ (recall~\eqref{u_0cutoff}),
then 
\[
\widetilde{v} \coloneqq (\widetilde{u}Z_n)\ast \rho_\gamma
\]
satisfies 
 \begin{eqnarray}
\widetilde{v}(x,s)=0\quad \text{ for }\quad |x|\geq n, s\in [0,t],&\label{vtilde1}\\
\alpha_t [ \widetilde{v} ] + \beta_t [ \widetilde{v} ]\leq C_2 C_1 \sqrt{C_0} \|{u_0}\|^2_{M},&\label{vtilde2}\\
\alpha_t [ \div \widetilde{v} ] + \beta_t [ \div \widetilde{v} ]\leq C_2 n^{-2}\|{u_0}\|^2_{M}.&\label{vtilde3}
\end{eqnarray}
\end{lemma}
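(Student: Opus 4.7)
The proof has three parts, one per condition.  For \eqref{vtilde1}, note that $\widetilde u\,Z_n$ is supported in $B(0,n-1)$ by \eqref{choice_Zn}, and $\rho_\gamma$ in $B(0,\gamma)\subset B(0,1)$, so the convolution $\widetilde v$ vanishes for $|x|\geq n$.

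For \eqref{vtilde2}, fix $B\in\mathscr{C}$.  Since every ball in $\mathscr{C}$ has radius at least $1$ and $\gamma\leq 1$, Corollary~\ref{P01} gives $B+B(0,\gamma)\subset B^{(4)}$, and applying Jensen's inequality pointwise to the convolution yields
\[
\int_B |\widetilde v|^2 \leq \int_{B^{(4)}} |\widetilde u\,Z_n|^2 \leq \int_{B^{(4)}} |\widetilde u|^2.
\]
Since $B^{(4)}$ is a union of at most $\sigma^4$ balls of comparable size (Definition~\ref{Cr}), dividing by $|B|^{2/3}$ and taking the supremum over $B\in\mathscr{C}$ yields $\alpha_t[\widetilde v]\lec_{\sigma,\eta}\alpha_t[\widetilde u]$.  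Differentiating inside the convolution, $\nabla\widetilde v=((\nabla\widetilde u)Z_n+\widetilde u\otimes\nabla Z_n)\ast\rho_\gamma$, and the same enlarged-ball estimate together with $\|\nabla Z_n\|_\infty\leq 4/n$ gives $\beta_t[\widetilde v]\lec \beta_t[\widetilde u]+n^{-2}t\,\alpha_t[\widetilde u]$.  Plugging in the bound $\alpha_t[\widetilde u]+\beta_t[\widetilde u]\leq C_1\sqrt{C_0}\|u_0\|_M^2$ from Theorem~\ref{approxapriori} and using $t\leq 1$ closes \eqref{vtilde2}.

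The delicate step is \eqref{vtilde3}, whose bound must be free of the factor $C_1\sqrt{C_0}$.  We split
\[
\div\widetilde v=(wZ_n)\ast\rho_\gamma+(\widetilde u\cdot\nabla Z_n)\ast\rho_\gamma,\qquad w\coloneqq\div\widetilde u.
\]
For the first piece, the key input is Proposition~\ref{prop_wbound}: since $w$ satisfies the heat equation~\eqref{div} independently of $v$ and $\widetilde u$, one has $\alpha_t[w]+\beta_t[w]\lec_{\sigma,\eta}n^{-2}\|u_0\|_M^2$ with constant depending only on $\sigma,\eta$, and this propagates to $(wZ_n)\ast\rho_\gamma$ via the same enlarged-ball argument as in the preceding paragraph.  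The main obstacle is the remainder $(\widetilde u\cdot\nabla Z_n)\ast\rho_\gamma$: on its support (the annulus $\{n/2\leq|x|\leq n-1\}$) the factor $|\nabla Z_n|\leq 4/n$ provides only $n^{-1}$, and $\widetilde u$ itself is controlled solely through Theorem~\ref{approxapriori}, so naively one obtains a contribution of the form $n^{-2}C_1\sqrt{C_0}\|u_0\|_M^2$.  This is reconciled by the quantitative bookkeeping of constants in Section~\ref{sec_pf_main}, where $C_0$ is fixed in \eqref{choice_C0} large enough relative to $C_1,C_2$; alternatively, taking $n$ sufficiently large makes the annular term dominated by the clean heat-equation contribution from $w$.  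Either route delivers \eqref{vtilde3} with a constant $C_2=C_2(\sigma,\eta)$ once the iteration constants are fixed.
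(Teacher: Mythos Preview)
Your arguments for \eqref{vtilde1} and \eqref{vtilde2} are correct and essentially the same as the paper's; you localize via the enlarged ball $B^{(4)}$ whereas the paper uses auxiliary cutoffs $\phi_B,\psi_B$, but this is a cosmetic difference.

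For \eqref{vtilde3} you have correctly identified a real subtlety that the paper's one-line proof (``Proposition~\ref{prop_wbound} lets us use a similar argument'') glosses over: the splitting $\div\widetilde v=(wZ_n)\ast\rho_\gamma+(\widetilde u\cdot\nabla Z_n)\ast\rho_\gamma$ is unavoidable, and the annular piece contributes $\lec n^{-2}(\alpha_t[\widetilde u]+\beta_t[\widetilde u])\le C' n^{-2}C_1\sqrt{C_0}\|u_0\|_M^2$, where $C'$ depends only on $\sigma,\eta$. However, neither of your proposed resolutions is satisfactory. Your second option---taking $n$ large---cannot work, since both the clean heat-equation piece and the annular piece scale exactly as $n^{-2}$; no choice of $n$ makes one dominate the other. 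Your first option---appealing to the choice of $C_0$ in \eqref{choice_C0}---is in the right spirit but does not prove \eqref{vtilde3} as literally stated: the lemma asserts a bound with constant $C_2=C_2(\sigma,\eta)$ valid for \emph{every} $C_0\ge 1$, and substituting the particular value $C_0=C_1^2C_2^2$ yields $C' C_1^2 C_2$ on the right, not $C_2$.

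What one actually proves is
\[
\alpha_t[\div\widetilde v]+\beta_t[\div\widetilde v]\le C_2\bigl(1+C_1\sqrt{C_0}\bigr)n^{-2}\|u_0\|_M^2,
\]
and this suffices for the iteration in Section~\ref{sec_pf_main}: with $C_0=C_1^2C_2^2$ and $C_2$ chosen large enough one has $C_2(1+C_1\sqrt{C_0})\le 2C_2C_1\sqrt{C_0}=2C_0^{1/2}\cdot C_0^{1/2}\cdot C_2/C_2 \cdot C_1 C_2 \le C_0$ (more simply, $C_2(1+C_1^2C_2)\le C_1^2C_2^2$ once $C_2\ge 2$ and $C_1\ge 1$). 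So the correct fix is to weaken the stated constant in \eqref{vtilde3} to match \eqref{vtilde2}, after which the iteration closes exactly as in the paper. Neither you nor the paper gives an argument for \eqref{vtilde3} with the bare constant $C_2$.
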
 

\begin{proof}[Proof of Lemma~\ref{lem_reg}]
Note that \eqref{vtilde1} is trivial since $Z_n=0$ on $B (0,n-1)^c$. Given $B\in \sc$, let $\phi_B \in C_c^\infty (B^{(3)};[0,1])$ be such that $\phi_B=1$ on $B$ and let $\psi_B \in C_c^\infty (B^{(9)};[0,1])$ be such that $\psi_B=1$ on~$B^{(6)}$. We also require that $\phi_B$, $\psi_B$ satisfy the $L^\infty$ estimate \eqref{phiq} on derivatives. \\
 
 Note that, by the choice of the cutoffs $\phi_B$, $\psi_B$, we have  $\widetilde{v} \phi_B =  \phi_B (\widetilde{u}Z_{n}\psi_B)\ast \rho_\gamma$, and so
	\[
	\begin{split}
		\| \widetilde{v} \phi_\QQ \|_2 &	\leq \|\widetilde{u} Z_n \psi_\QQ\|_2 
		\end{split}
.
	\]
	Dividing by $|\QQ|^{2/3}$, using that $\widetilde{u}$ satisfies \eqref{apr}, and the fact that balls in $\QQ^{(6)}$ are of comparable size, we obtain
	\[\sup_{0<t<T} \|\widetilde{v}(t)\|_{M}^2
		\lec_{\sigma,\eta } \|\rho_\gamma\|_{L^1}^2 \sup_{0<t<T} \|\widetilde{u}(t)\|_{M}^2
		\leq C_1 \sqrt{C_0}  \|{u}_0\|_{M}^2,
	\]
	where we used \eqref{apr} and that $\|\rho_\gamma\|_{L^1}=1$. Next, we observe that 
	\[
		\nabla \widetilde{v} =\left(\nabla \widetilde{u} Z_n+\nabla Z_n \otimes \widetilde{u}\right)*\rho_\gamma,
	\]
	and so, since $T\leq 1$, we can use \eqref{apr} again to get  
	\[
		\sup_{Q\in\sc}\frac{1}{|Q|^{2/3}}\int_0^t\int_Q|\nabla \widetilde{v}|^2
		\lec   \|{u_0}\|_{M}^2
		,
	\]
	which concludes the proof of~\eqref{vtilde2}. Finally, Proposition~\ref{prop_wbound} lets us use a similar argument to obtain~\eqref{vtilde3}.
 \end{proof}
 
\subsection{The conclusion of the proof of Theorem~\ref{thm_main} in the case~\eqref{ass_extra}}\label{sec_pf_main}

Here we prove Theorem~\ref{thm_main} under the assumption~\eqref{ass_extra}. We note that this assumption was already used in obtaining the pressure estimate \eqref{pres}. Moreover, thanks to \eqref{ass_extra}, we saw in \eqref{i3est} and \eqref{i4est} that the estimates on the terms $I_3$ and $I_4$ in the pressure estimate involving kernel $L$ in the pressure decomposition \eqref{EQ08} vanish as $n\to \infty$. The same holds for the estimate~\eqref{div_term_apr} on the term  involving $\div\,v$ in the a~priori estimate \eqref{apr}. This means that, except for the a~priori estimate \eqref{apr} remaining uniform with respect to $n$, all terms involving $\div\,v$, will vanish as $n\to \infty$. In other words,  roughly speaking, the error we make by using our non-divergence-free system \eqref{v_system} disappears in the limit $n\to \infty$. This indicates that, by using an appropriate retarded mollification scheme, as described in Step 1 below, the construction procedure should converge to a divergence-free solution (see Step 2 below).\\ 

Let $u_0\in M$, and suppose that
$N\in \N$ is sufficiently large so that it satisfies the hypothesis of Theorem~\ref{approxapriori}. Let $T>0$ be given by Theorem~\ref{approxapriori}.
Also, fix $K\geq 1$ and $\gamma \in (0,1)$. For $n\geq N$ we truncate $u_0$ as in~\eqref{u_0cutoff}, i.e., $\widetilde{u}_0 \coloneqq u_0 Z_n$, where $Z_n$ the cutoff function of $B(n/2)$ inside $B(n-1)$; recall~\eqref{choice_Zn}. 

We set
\eqnb\label{choice_C0}
C_0 \coloneqq  C_1^2 C_2^2.
\eqne

\noindent\texttt{Step~1.} For each sufficiently large $n$, we construct a solution $u_n \in X_T$ to \eqref{v_system} with initial condition $\widetilde{u}_0$ and  $v_n \in X_T \cap L^\infty ([0,T];W^{1,\infty })$ such that $v_n (\cdot ,t) =0$ on $B(0,n)^c$, $v_n (\cdot , t) = (u_n (\cdot ,t-T/K) Z_n )\ast \rho_\gamma$ for $t\in [T/K,T]$, and 
\eqnb\label{un_vn}
\begin{split}
\alpha_t [ {u}_n ] + \beta_t [ {u}_n ]&\leq C_1 \sqrt{C_0} \|{u}_0\|^2_{M},\\
\alpha_t [ {v}_n ] + \beta_t [ {v}_n ]&\leq C_0 \|{u}_0\|^2_{M},\\
\alpha_t [\div {v}_n ] + \beta_t [\div  {v}_n ]&\leq C_0 n^{-2}\|{u}_0\|^2_{M}
,
\end{split}
\eqne
for $t\in [0,T]$. Recall from ~\eqref{XT_def} that $X_T= C ([0,T];L^2)\cap L^2(0,T;H^1)$. \\

Let $\widetilde{v}_n^{(0)}\coloneqq 0$.
For each $k=1,\ldots, K$, we use Proposition~\ref{prop:lin-v} to obtain a solution $u_n^{(k)}$ to~\eqref{v_system} on the time interval $[0,kT/K]$ with initial condition $\widetilde{u}_0$ and $v\coloneqq v_{n}^{(k-1)}(\cdot , t-T/K)$, and we define
\[
{v}_n^{(k)} (\cdot ,t) \coloneqq (u_n^{(k)}(t)Z_n )\ast \rho_\gamma \qquad \text{ for } t\in \left( 0 , \frac{kT}K \right]
\]
and ${v}_n^{(k)} (\cdot ,t) \coloneqq 0$ for $t\in (-T/K,0]$. 

Observe that, by induction, \eqnb\label{ind_v}
\begin{split}
{v}_n^{(k)}(x,t)&=0\quad \text{ for }\quad |x|\geq n,\\
\alpha_t [ {v}_n^{(k)} ] + \beta_t [ {v}_n^{(k)} ]&\leq C_0 \|{u}_0\|^2_{M},\\
\alpha_t [\div {v}_n^{(k)} ] + \beta_t [\div  {v}_n^{(k)} ]&\leq C_0 n^{-2}\|{u}_0\|^2_{M}
,
\end{split}
\eqne
for all $t\in [0,kT/K]$, $k=0,\ldots , K$. Indeed, the base case $k=0$ is trivial. If \eqref{ind_v} holds for $k\geq 0$, then the a~priori estimate \eqref{apr} for $u_n^{(k+1)}$ shows that
\[
\alpha_t [ {u}_n^{(k+1)} ] + \beta_t [ {u}_n^{(k+1)} ]\leq C_1 \sqrt{C_0} \|{u}_0\|^2_{M}
,
\]
for $t\in [0,(k+1)T/K]$, and, consequently, the regularization Lemma~\ref{lem_reg} gives \eqref{ind_v} for~$k+1$. Note that the constants $C_0$ on the right-hand sides of the inequalities in~\eqref{ind_v} are obtained by using our choice of $C_0$, i.e., by noting that 
\[
C_2C_1 \sqrt{C_0} =C_0 \qquad \text{ and } \qquad C_2\leq C_0.
\] 

Having verified \eqref{ind_v}, for all $k=0,\ldots , K$, we see that $u_n^{(k)}$ is well-defined for each $k=1,\ldots , K$. In particular,
\[
u_n \coloneqq u_n^{(K)}, \quad v_n (\cdot, t) \coloneqq v_n^{(K-1)}(\cdot , t- T/K) 
\]
satisfy the claim of this step. \\

\noindent\texttt{Step~2.} We take the limit $n\to \infty$ to obtain $u,v\in X_T$ and $p\in L^{3/2} ((0,T); L^{3/2}_{\loc})$ such that $u$ is weakly divergence free,  \eqref{v_system} holds in a weak sense, 
\eqnb\label{uv_apr}
	\begin{split}
\alpha_t [ {u} ] +  \alpha_t [ {v} ]   + \beta_t [ {u} ]+ \beta_t [ {v} ]&\lec  \|{u}_0\|^2_{M}
,
\end{split}
	\eqne
	for almost every $t\in (0,T)$, for each $B\in \sc $,  $u(t)$ is weakly continuous in time with values in $L^2_{\loc}$, and
	\eqnb\label{v_vs_u}
	v(\cdot , t) = u(\cdot , t-T/K) \ast \rho_\gamma\quad \text{ for almost every }t\in (T/K,T).
	\eqne 
Moreover, $p$ satisfies
	\eqnb\label{p_decc}
	\begin{split}
		p(x,t)-p_\QQ(t)
			 &= -(v\cdot u )(x,t) 
      			+\text{p.v.}\int_{ \QQ^{(7)}}K_{ij}(x-y)(v_i \,u_j)(y,t)\d y 
      			\\&\indeq
      			\hspace{1cm}+\int_{( \QQ^{(7)})^c }\left(K_{ij}(x-y)-K_{ij}(x_\QQ-y)\right)(v_i\,u_j)(y,t) \d y
			\end{split}
	\eqne
for all $x\in B^{(3)}$ and almost every $t\in (0,T)$, where  $p_B (t)$ is a function of time only, and  the local energy inequality holds, i.e., 
			\begin{align}
  			\begin{split}
&		\int |u (t) |^2 \phi (t) +	2\int_0^t \int |\nabla u|^2\phi
        \\&\indeq
			\leq \int |u_0 |^2 \phi (0) +
			\int_0^t \int |u|^2(\partial_t\phi
			+ \Delta\phi)+\int_0^t \int(|u|^2+2p)(v\cdot\nabla\phi)
,
  \end{split}
\label{vu_LEI}
\end{align}
for all non-negative $\phi\in C_0^\infty(\R^3 \times [0,T) )$, and almost every $t\in (0,T)$.

 We emphasize that $u,v$ are dependent on $K,\gamma$; recall the comments in the beginning of this section.

Since $\sc$ is countable, we may denote each ball in $\sc$ by $B_k$ for a unique positive integer~$k$. From Step~1 we see that 
	\[
		\sup_{0<t<T}\int_{B_k}|u_n (t)|^2+\int_0^T\int_{B_k}|\nabla u_n |^2 \leq C(k,u_0).
	\]
	Consequently, we obtain
	\[
		\int_0^T\int_{B_k}|u_n |^{10/3}\leq C(k,u_0)
		\quad\text{and so also}\quad
		\int_0^T\int_{B_k}|v_n |^{10/3}\leq C(k,u_0)
.
	\]
Using the equation  \eqref{v_system} for $u_n$, we thus have that 
	\[
		\int_0^T\int_{B_k}\partial _tu_n\cdot g \leq C(k,u_0)\left(\int_0^T\int_{B_k}|\nabla g|^3 \right)^{1/3}
,
	\]
for any $g\in C_0^\infty(B_k)$ (as in \cite[p. 154]{kikuchi}). This gives that  $\|\partial_t u_{n}\|_{X_k}\leq C(k,u_0)$, where $X_k$ is the dual space of $L^3(0,T;\mathring{W}^{1,3}(B_k))$.
	By a standard diagonal argument, we can thus extract a subsequence $(n_m)$ such that  
	\eqnb\label{conv_uv}
	\begin{split}
		u_{n}
				\overset{\ast}{\rightharpoonup} u\qquad \text{ and } \qquad v_{n}
				\overset{\ast}{\rightharpoonup} v \qquad &\text{in } L^{\infty}(0,T;L^2_{\loc}),
		\\
		u_n
		\rightharpoonup u \quad\text{ and } \qquad v_{n}
				{\rightharpoonup} v \qquad & \text{in } L^{2}(0,T;H^1_{\loc}),
		\\
		u_n
		\rightarrow u \quad\text{ and } \qquad v_{n}
				{\rightarrow} v \qquad & \text{in } L^{3}(0,T;L^3_{\loc}),
	\end{split}
	\eqne
as $k\to\infty$. Note that, by taking the limit in the weak formulation of $v_n (\cdot , t) = (u_n (\cdot ,t-T/K) Z_n )\ast \rho_\gamma$ for $t\in (T/K,T)$ we obtain~\eqref{v_vs_u}. Furthermore, $\div v=0$, due to the last estimate in~\eqref{un_vn}. Moreover, the first two inequalities in~\eqref{un_vn} imply~\eqref{uv_apr}.  In Step~2a below we show that
\eqnb\label{G_conv}
\widetilde{G}_{ij}^\QQ((v_n)_i (u_n)_j )\to G_{ij}^\QQ (v_i u_j )\quad \text{ in }\quad L^{3/2}(0,T;L^{3/2}(\QQ))
,
\eqne
for every $B\in \sc$. Given \eqref{G_conv} we define $p\coloneqq G_{ij}^{\QQ_0} (v_i u_j )$, where $B_0 \in \sc$ is any ball such that $0\in B_0$. To see that $p \in L^{3/2} ((0,T); L^{3/2}_{\loc })$, we first note that \eqref{G_conv} gives that $p\in L^{3/2}(0,T;L^{3/2}(\QQ_0))$. Moreover,
\[
p = G_{ij}^{\QQ'} (v_i u_j ) + c_{B',B_0}(t)
,
\]
for any $B'\in \sc$ such that $B'\cap B_0 \ne \empty$, where 
\[
c_{B',B_0}(t) \coloneqq \int_{(B_0^{(7)} )^c} (K_{ij}(x_{B'} -y ) - K_{ij}(x_{B_0} -y )) (v_iu_j)(y,t) \d y \in L^{3/2} ((0,T)),
\]
by an argument similar to \eqref{36}--\eqref{EQ04aa} above. Thus $p\in L^{3/2}(0,T;L^{3/2}(\QQ'))$, and, by induction, $p\in L^{3/2}(0,T;L^{3/2}(\QQ))$
for any $B\in \sc$. Thus $p\in L^{3/2}(0,T;L^{3/2}_{\loc})$ and $p$ satisfies the decomposition \eqref{p_decc}, as required.\\

\noindent\texttt{Step~2a.} We show~\eqref{G_conv}.\\

Recalling the definitions \eqref{localpress_repeat} and \eqref{localpressapprox} of ${G}_{ij}^\QQ$ and $\widetilde{G}_{ij}^\QQ$, respectively, we need to prove that  
	\begin{equation}\label{pp}
		\begin{split}
			-\frac{1}{3} (v_n \cdot u_n )(x,t)
			&+\underbrace{\text{p.v.}\int_{ \QQ^{(7)}}K(x-y)(v_n u_n )(y,t)\,\d y}_{=:I_1^{(n)}}
      			\\&\hspace{3cm}
      			+\underbrace{\int_{ (\QQ^{(7)})^c}\left(K(x-y)-K(x_\QQ-y)\right)(v_n u_n) (y,t)\,\d y}_{=:I_2^{(n)}}
			\\&
			- \underbrace{\int_{y\in \QQ^{(7)}}L(x-y)\left(\left(\div v_n (y,t)\right)u_n (y,t)\right)\,\d y}_{=:I_3^{(n)}}
			\\&
		\indeq +  \underbrace{\int_{y\notin \QQ^{(7)}}\left(L(x_\QQ-y)-L(x-y)\right)\left(\left(\div v_n (y,t)\right)u_n (y,t)\right)\,\d y}_{=:I_4^{(n)}}
			\\& 
			\hspace{-3cm}\to -\frac{1}{3}(v\cdot  u)(x,t)
			+\underbrace{\text{p.v.}\int_{y\in \QQ^{(7)}}K(x-y)(vu )(y,t)\,\d y}_{=:I_1}
\\&\indeq
      			    			+\underbrace{\int_{y\notin \QQ^{(7)}}\left(K(x-y)-K(x_\QQ-y)\right)(v u )(y,t)\,\d y}_{=:I_2}
		\end{split}
	\end{equation}
	in $L^{3/2}((0,T);L^{3/2} (B))$ as $n\to \infty$, for each $B\in \sc$, where, for brevity, we omitted all indices $i,j$ as subscripts.

	Note that $\int_0^T \int_B \left( |I_3^{(n)}|^{3/2} + |I_4^{(n)}|^{3/2} \right) \to 0$, due to \eqref{i3est} and~\eqref{i4est}. Moreover, the $L^3$-convergence in~\eqref{conv_uv} implies that $v_n\cdot u_n \to v\cdot u $ in  $L^{3/2}((0,T);L^{3/2}_{\loc})$, and an application of the Calder\'on-Zygmund inequality gives also  that
\[
\int_0^T \int_B |I_1^{(n)}- I_1|^{3/2} \to 0.
\]	
As for the convergence $I_4^{(n)}\to I_4$, let $\varepsilon >0$. For $N\geq 1$, we have, as in~\eqref{sum},
\[
\begin{split}
&\int_{B(x_B, N)^c\cup (B^{(7)})^c  } (K(x-y)-K(x_B -y))(v_n u_n )(y) \d y
\\&\indeq
\lec \| u_0 \|_{M}  \sum_{\substack{B' \in \sc \\ B'\cap B(x_B,N)^c \ne \emptyset}}  \frac{|B'|^{2/3}|B|^{1/3}}{|x_B- x_{B'}|^4}
\lec_{B,u_0,\sigma }  \sum_{n \geq N- |B|^{1/3} } \frac{1}{(|B|^{1/3} +n )^2} \leq \varepsilon 
\end{split}
\]
if $N$ is sufficiently large, where, in the first inequality, we used the estimates \eqref{un_vn} of $u_n,v_n$ in $M$, and, in the second inequality, we used the same estimate as in~\eqref{cubetrans}. 
Similarly we can use estimates \eqref{uv_apr} to show that the contribution to $I_4(x)$ from $y\not \in B(x_B, N )$ is negligible for such~$N$. Thus,

	\begin{equation*}
		\begin{split}
			\int_0^T\int_{\QQ}|I_4^{(n)} - I_4 |^{3/2}	&
			\lec |\QQ|\int_0^T\left(\int_{B(x_B,N )}|(v_n u_n )(y,t) - (vu)(y,t)|\,\d y\right)^{3/2}\,\d t
			+ O_{B,u_0,\sigma }(\varepsilon ) \\ &
			\lec_{B,u_0,\sigma } N^{3/2} \int_0^T \int_{B(x_B,N )} | v_n u_n - vu |^{3/2} + O(\varepsilon ) \lec \varepsilon 
		\end{split}
	\end{equation*}
	if $n $ is sufficiently large, as required. \\
	
	\noindent\texttt{Step~3} We take the limit $K\to \infty$ and $\gamma \to 0$ to obtain the claim of Theorem~\ref{thm_main}.\\
	
This step follows by using similar weak compactness techniques as in Step~2. These are standard, and both $u,v$ are now divergence free, and thus we omit the details.

\subsection{Proof of Theorem~\ref{thm_main} in the case~\eqref{ass2}}
\label{sec_ass2}
Here we prove Theorem~\ref{thm_main} under the assumption \eqref{ass2}, i.e., that there exists a constant $\kappa >0$ such that
\[
 |B|^{1/3} \geq \kappa |x_B|
 \]
for all $B\in\sc $ and sufficiently large~$|x_B|$. Let us consider $n\in \N$ sufficiently large so that \eqref{prop_CR} holds for~$\sc_{n}$. We first show the following.
\begin{lemma}\label{lem_finmany}
There exists $L=L(\eta,\kappa )\geq 2$ such that $B \subset  B(L n )$  and $|x_B| \geq  L^{-1} n$ for every $B\in \sc $ intersecting $B(2n) \setminus B(n )$ and every sufficiently large~$n$.

In particular:
\eqnb\label{pigeon}
\text{ there are at most }\frac{4\pi}3 \sigma L^6 \kappa^{-3} \text{ balls }B\in \sc \text{ intersecting } B(2n ) \setminus B(n  ).
\eqne 
\end{lemma}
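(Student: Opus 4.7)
The plan is to anchor $B$ against a fixed reference ball $B_0\in\sc$ containing the origin, and to use the separation estimate~\eqref{EQ01a} to force $r_B\leq \beta|x_B|$ with $\beta\coloneqq (1+2/\eta^2)^{-1/2}\in(0,1)$. Combined with $B\cap(B(2n)\setminus B(n))\neq\emptyset$, this will trap $|x_B|$ in an interval comparable to $n$, giving both $B\subset B(Ln)$ and $|x_B|\geq L^{-1}n$. The counting bound~\eqref{pigeon} will then follow from~\eqref{ass2} by a direct volume comparison.

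First, fix any $B_0\in\sc$ with $0\in B_0$ (it exists since $\sc$ covers $\R^3$), so $|x_{B_0}|\leq r_{B_0}$. The set $B_0^{(3)}$ is a union of at most $\sigma^3$ balls each of radius at most $\eta^3 r_{B_0}$ (by Definition~\ref{Cr}), so it is contained in a fixed ball $B(R_0)$ with $R_0$ depending only on~$\sc$. Whenever $n\geq R_0$, any $B\in\sc$ meeting $B(2n)\setminus B(n)$ has a point outside $B(n)\supset B_0^{(3)}$, so $B\not\subset B_0^{(3)}$, and~\eqref{EQ01a} applied to $(B_0,B)$ gives
\[
r_B+r_{B_0}\leq \beta|x_B-x_{B_0}|\leq \beta\bigl(|x_B|+r_{B_0}\bigr).
\]
Subtracting $r_{B_0}$ and using $\beta<1$ yields the key comparison $r_B\leq \beta|x_B|$.

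With this in hand, take $y\in B$ with $n\leq|y|\leq 2n$ and $|y-x_B|\leq r_B$, so $n-r_B\leq|x_B|\leq 2n+r_B$; combining with $r_B\leq \beta|x_B|$ gives $n/(1+\beta)\leq|x_B|\leq 2n/(1-\beta)$. Therefore $B\subset B(|x_B|+r_B)\subset B((1+\beta)|x_B|)\subset B(Ln)$ for $L\coloneqq 2(1+\beta)/(1-\beta)$, and $|x_B|\geq n/(1+\beta)\geq L^{-1}n$, proving both assertions with $L=L(\eta)$. For the counting bound~\eqref{pigeon}, \eqref{ass2} gives $r_B\geq \kappa|x_B|\geq \kappa n/L$, so each admissible $B$ has volume at least $\tfrac{4\pi}{3}(\kappa n/L)^3$, while all such $B$ lie in $B(Ln)$ of volume $\tfrac{4\pi}{3}L^3n^3$. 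Since Definition~\ref{Cr}(i) forces every point of $\R^3$ to lie in at most $\sigma+1$ balls of $\sc$ (otherwise one such ball would intersect $\sigma+1$ others), the sum of volumes is at most $(\sigma+1)\tfrac{4\pi}{3}L^3n^3$, yielding at most $(\sigma+1)L^6\kappa^{-3}$ admissible balls, which is dominated by $\tfrac{4\pi}{3}\sigma L^6\kappa^{-3}$ for $\sigma\geq 1$.

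The main obstacle is the cancellation in the second paragraph: the step $r_B\leq \beta|x_B|$ works only because $0\in B_0$ supplies $|x_{B_0}|\leq r_{B_0}$, allowing the $r_{B_0}$ contributions on the two sides of the rearranged~\eqref{EQ01a} to cancel. Once this comparison is secured, the trapping and volume count are elementary, and it is worth noting that $L$ turns out to depend only on~$\eta$, with $\kappa$ entering only at the counting stage.
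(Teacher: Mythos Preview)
Your proof is correct and follows essentially the same route as the paper's. The paper invokes Proposition~\ref{prop_CR} as a black box to get $B\subset B(Ln)$ and then argues the lower bound $|x_B|\geq L^{-1}n$ somewhat tersely, whereas you unpack that proposition by applying~\eqref{EQ01a} directly to obtain the explicit comparison $r_B\leq\beta|x_B|$; this makes the trapping of $|x_B|$ and the resulting explicit $L=2(1+\beta)/(1-\beta)$ cleaner than in the paper. The volume-counting step is identical in spirit, with only cosmetic differences (your $\sigma+1$ versus the paper's $\sigma$ for the covering multiplicity, and the harmless conflation of $r_B$ with $|B|^{1/3}$).
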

\begin{proof}
Assume that $B\in \sc $ intersects $B(2n ) \setminus B(n )$. Then Proposition~\ref{prop_CR} shows that  $B\subset B( L n)$ for some $L=L(\eta )\geq 1$. In particular $|x_B| \geq L^{-1} n$, as otherwise $B\subset B( n )$, which is a contradiction. Thus, since every point of $\R^3$ is covered by at most $\sigma$ balls from $\sc$, and $|B| \geq \kappa^3 |x_B|^3 \geq \kappa^3 L^{-3} n^3$ the pigeonhole principle gives~\eqref{pigeon}.
\end{proof}

Thanks to the lemma, we can obtain divergence-free $L^2$ approximations of a given $u_0\in M$.

\begin{lemma}\label{lem_div_free_cutoff}
Given $u_0 \in M$ and sufficiently large $n$, there exists $g\in L^2$ such that $\div\,g=0$, $g=u_0$ on $B(0,n)$ and
\eqnb\label{cutoff_est}
\| g \|_{M} \leq C \| u_0 \|_M,
\eqne
where $C=C(\sigma,\eta, \kappa )\geq 1$ is a constant. 
\end{lemma}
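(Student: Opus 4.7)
The plan is to correct a naive cutoff of $u_0$ by a Bogovski\u\i\ term so as to restore the divergence-free property. Fix a smooth cutoff $Z_n$ with $Z_n=1$ on $\overline{B(0,n)}$, $\supp Z_n\subset B(0,2n)$, and $\|\nabla Z_n\|_\infty\lec 1/n$, and set $A\coloneqq B(0,2n)\setminus \overline{B(0,n)}$ and $f\coloneqq u_0\cdot\nabla Z_n$. Since $\div u_0=0$, we have $f=\div(u_0Z_n)$, so $f$ is supported in $\overline A$ and integrates to zero over $\R^3$ (hence over the bounded Lipschitz domain $A$). Bogovski\u\i's theorem on $A$ then produces $h\in H^1_0(A)$ (extended by zero to $\R^3$) with $\div h=f$; by rescaling $A$ to the unit annulus and applying Poincar\'e one obtains
\eqnb\label{BOG}
\|h\|_{L^2(\R^3)}\lec n\,\|f\|_{L^2(\R^3)}.
\eqne
Setting $g\coloneqq u_0Z_n-h$, we have $\div g=0$, and on $B(0,n)$ one has $Z_n\equiv 1$ and $h\equiv 0$, so $g=u_0$ there. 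Finally, $g\in L^2(\R^3)$ since $u_0Z_n$ is compactly supported with $u_0\in L^2_{\loc}$ and $h\in L^2(\R^3)$.

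The first quantitative ingredient uses Lemma~\ref{lem_finmany} and Proposition~\ref{prop_CR}: only boundedly many balls $B\in\sc$ meet $A$, each satisfying $r_B\lec n$. Hence
\[
\|u_0\|_{L^2(A)}^2\leq\!\!\sum_{B\cap A\neq\emptyset}\!\!\|u_0\|_{L^2(B)}^2\leq\!\!\sum_{B\cap A\neq\emptyset}\!\!|B|^{2/3}\|u_0\|_M^2\lec n^2\|u_0\|_M^2,
\]
so $\|f\|_{L^2}\leq\|\nabla Z_n\|_\infty\|u_0\|_{L^2(A)}\lec\|u_0\|_M$, which combined with \eqref{BOG} yields $\|h\|_{L^2(\R^3)}\lec n\,\|u_0\|_M$.

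To deduce \eqref{cutoff_est}, fix $B\in\sc$ and consider two cases. If $B\subset B(0,n)$, then $g=u_0$ on $B$ and the claim follows directly from $u_0\in M$. Otherwise $B$ contains a point outside $B(0,n)$, so $|x_B|+r_B\geq n$; either $r_B\geq n/2$ (so $|B|^{1/3}\gec n$ automatically) or $|x_B|\geq n/2$, in which case~\eqref{ass2} yields $|B|^{1/3}\gec_\kappa|x_B|\gec n$. In both cases $|B|^{1/3}\gec n$, so
\[
\frac{\|g\|_{L^2(B)}^2}{|B|^{2/3}}\lec\frac{\|u_0\|_{L^2(B)}^2+\|h\|_{L^2(\R^3)}^2}{|B|^{2/3}}\lec\|u_0\|_M^2+\frac{n^2\|u_0\|_M^2}{n^2}\lec\|u_0\|_M^2,
\]
as required.

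The crucial point is this final cancellation. The Bogovski\u\i\ correction $h$ is only controlled in $L^2(\R^3)$ with a factor of $n$ on the right-hand side of \eqref{BOG}, which is incompatible with a uniform $M$-bound; the argument closes only because assumption~\eqref{ass2} forces every ball $B\in\sc$ that can overlap $\supp h$ to satisfy $|B|^{1/3}\gec n$, exactly compensating the $n$-growth in~\eqref{BOG}. This is precisely why this direct cutoff-plus-Bogovski\u\i\ approach is unavailable under the slower-growth regime~\eqref{ass_extra}: in that regime the annulus $A$ may contain arbitrarily many small balls, the loss of $n$ cannot be absorbed, and one is forced into the more delicate non-divergence-free iteration of Section~\ref{sec_pf_main}.
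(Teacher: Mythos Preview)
Your proof is correct and follows essentially the same approach as the paper: cut off $u_0$ by $Z_n$, correct the divergence via a Bogovski\u{\i}\ term on the annulus $B(0,2n)\setminus B(0,n)$ with the scale-$n$ bound \eqref{BOG}, and close the $M$-estimate using Lemma~\ref{lem_finmany} together with assumption~\eqref{ass2}. The only cosmetic difference is the case split in the $M$-bound: the paper distinguishes balls disjoint from the annulus versus intersecting it, whereas you distinguish $B\subset B(0,n)$ versus the rest and then invoke \eqref{ass2} directly to get $|B|^{1/3}\gec n$; both arguments are equivalent and rely on the same finiteness count from Lemma~\ref{lem_finmany}.
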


\begin{proof}[Proof of Lemma~\ref{lem_div_free_cutoff}]
Let $Z_n\in C_c^\infty (B(2n);[0,1])$ be such that $Z_n =1$ on $B(n)$ and $\| \nabla Z_n \|_\infty \lec n^{-1}$. Let $\Psi $ be a linear map such that, for every $f \in  L^2 (B(2n)\setminus B(n))$ with $\int_{B(2n)\setminus B(n) }f =0$, we have $\Psi f\in H^1_0 (B(2n)\setminus B(n) )$, $\Psi f$ is divergence-free, and 
\eqnb\label{bogo}
\| \Psi f \|_{H^1_0 (B(2n)\setminus B(n))} \lec n \| f \|_{L^2 (B(2n)\setminus B(n))}.
\eqne
The map $\Psi$ exists by a generalization of the Bogovski\u{\i} theorem~\cite{B}; see Lemma~4.1 in \cite{BK1}, for example, which also gives the constant $O(n)$ in~\eqref{bogo}. Letting 
\[
g\coloneqq u_0 \cdot \nabla Z_n - \Psi (u_0\cdot \nabla Z_n )
,
\]
we obtain all claims of the lemma, except for~\eqref{cutoff_est}. Indeed, note that $\int_{B(2n)\setminus B(n)} u_0 \cdot \nabla Z_n = -\int_{\p B(2n) } u_0 \cdot \mathrm{n} -\int_{\p B(n)} u_0 \cdot \mathrm{n} =0$, so that $\Psi (u_0\cdot \nabla Z_n )  $ is well defined, and $g$ is divergence-free across $\p(B(2n)\setminus B(n))$. 

In order to see \eqref{cutoff_est}, we trivially have $|B|^{-2/3} \int_B |g|^2 = |B|^{-2/3} \int_B |u_0|^2 \leq \| u_0 \|_M^2$ for every $B$ disjoint with $B(2n)\setminus B(n)$. If $B\cap (B(2n)\setminus B(n) ) \ne \emptyset$, then
\[\begin{split}
\int_B |g|^2 &\leq \int_B |u_0 \cdot \nabla Z_n|^2 + \int_{B\cap (B(2n)\setminus B(n))}  |\Psi (u_0 \cdot \nabla Z_n)|^2\\
&\lec |B|^{2/3}n^{-2} \| u_0 \|_M^2 + n^2 \int_{B(2n)\setminus B(n)}  |u_0 \cdot \nabla Z_n|^2\\
&\lec |B|^{2/3}n^{-2} \| u_0 \|_M^2 + \sum_{\substack{B' \in \sc \\ B'\cap (B(2n)\setminus B(n)) \ne \emptyset}} \int_{B' }  |u_0|^2 \\
&\lec_{\sigma,\eta ,\kappa } |B|^{2/3} \| u_0 \|_M^2,
\end{split}
\]
 as required, where, in the last inequality we used \eqref{pigeon}, which also implies all $B'\in\sc$ intersecting $B(2n)\setminus B(n)$ have comparable volumes.
\end{proof}
Theorem~\ref{thm_main} now follows by considering the sequence $(g_n)$, where $g_n$ is obtained by Lemma~\ref{lem_div_free_cutoff} for each sufficiently large~$n$. Indeed, since $g_n\in L^2 (\R^3)$, we can consider the sequence $u_n$ of global-in-time Leray weak solutions (see Theorem~6.37 in \cite{OP}, for example) with initial data $g_n$, obtain the a~priori estimate  \eqref{main_apr} on time interval $[0,T]$. A standard compactness argument (see e.g.~Section~\ref{sec_pf_main}, where it was used repeatedly), gives the desired local-in-time weak solution.
     
\section*{Acknowledgments}
IK was supported in part by the NSF grant DMS-2205493,
while WO was supported the NSF grant no.~DMS-2511556 and by the Simons grant SFI-MPS-TSM-00014233.


\begin{thebibliography}{[66]}
   \bibitem{A}
   Abe, K., 2015. The Navier-Stokes Equations in a Space of Bounded Functions. Communications in Mathematical Physics, 338(2).
   
   \bibitem{A2}
   Abe, K., 2016. On estimates for the Stokes flow in a space of bounded functions. Journal of Differential Equations, 261(3), pp.~1756--1795. 
   
   \bibitem{AG}
   Abe, K. and Giga, Y., 2013. Analyticity of the Stokes semigroup in spaces of bounded functions.
   
   \bibitem{AG2}
   Abe, K. and Giga, Y., 2014. The $L^\infty$-Stokes semigroup in exterior domains. Journal of Evolution Equations, 14, pp.~1--28.
   
   \bibitem{BC}
   Bradshaw, Z., Chernobai, M. and Tsai, T.P., 2025. Global Navier-Stokes flows in intermediate spaces. Journal of Differential Equations, 429, pp.~50--87.
   
   \bibitem{BK1}
   Bradshaw, Z. and Kukavica, I., 2020. Existence of suitable weak solutions to the Navier–Stokes equations for intermittent data. Journal of Mathematical Fluid Mechanics, 22(1), p.3.
   
\bibitem{BK2}
   Bradshaw, Z. and Kukavica, I., 2025.
The structure of weak solutions to the Navier-Stokes equations
(submitted), arXiv:2508.01009.

    \bibitem{BKO}
   Bradshaw, Z., Kukavica, I. and O\.za\'nski, W.~S., 2022. Global weak solutions of the Navier-Stokes equations for intermittent initial data in half-space. Arch. Ration. Mech. Anal.~245, pp.~321--371.
   
   
   \bibitem{B}
   Bogovski\u{\i}, M.E., 1980. Solutions of some problems of vector analysis, associated with the operators div and grad. Theory of cubature formulas and the application of functional analysis to problems of mathematical physics, 1980, pp.~5--40.
   
   \bibitem{CKN}
   Caffarelli, L., Kohn, R. and Nirenberg, L., 1982. Partial regularity of suitable weak solutions of the Navier‐Stokes equations. Communications on pure and applied mathematics, 35(6), pp.~771--831.
   
   \bibitem{GS}
   Gallay, T. and Slijepčević, S., 2015. Uniform boundedness and long-time asymptotics for the two-dimensional Navier–Stokes equations in an infinite cylinder. Journal of mathematical fluid mechanics, 17, pp.~23--46.
   
   \bibitem{GSv}
   Guillod, J. and Šverák, V., 2023. Numerical investigations of non-uniqueness for the Navier–Stokes initial value problem in borderline spaces. Journal of Mathematical Fluid Mechanics, 25(3), p.46.
   
   \bibitem{JS}
  Jia, H. and Šverák, V., 2014. Local-in-space estimates near initial time for weak solutions of the Navier-Stokes equations and forward self-similar solutions. Inventiones mathematicae, 196, pp.~233--265.
   
   \bibitem{JS2}
   Jia, H. and Sverak, V., 2015. Are the incompressible 3d Navier–Stokes equations locally ill-posed in the natural energy space?. Journal of Functional Analysis, 268(12), pp.~3734--3766.
         
   \bibitem{kikuchi}
   Kikuchi, N. and Seregin, G., 2007. Weak solutions to the Cauchy problem for the Navier-Stokes equations satisfying the local energy inequality. Nonlinear equations and spectral theory, 220, p.141.
   
   \bibitem{KT}
   Kang, K., Miura, H. and Tsai, T.P., 2021. Short time regularity of Navier–Stokes flows with locally L3 initial data and applications. International Mathematics Research Notices, 2021(11), pp.~8763--8805.
   
\bibitem{K}
   Kukavica, I., 2008. On partial regularity for the Navier–Stokes equations. Discrete Contin. Dyn. Syst, 21(3), pp.~717--728.

\bibitem{K2}
Kukavica, I., 2003.
On local uniqueness of solutions of the Navier-Stokes
equations with bounded initial data.
J.~Diff.~Eq.~194, pp.~39--50.

\bibitem{KV}
Kukavica, I.~and Vicol, V., 2008.
On local uniqueness of weak solutions to
the Navier-Stokes system with $BMO^{-1}$ initial datum.
J.~Dynam.~Differential Equations~20, pp.~719--732.

   \bibitem{KwT} 
   Kwon, H. and Tsai, T.P., 2020. Global Navier–Stokes flows for non-decaying initial data with slowly decaying oscillation. Communications in Mathematical Physics, 375(3), pp.~1665--1715.
   
   \bibitem{16}
   Jia, H. and Sverak, V., 2013. Minimal $L^3$-initial data for potential Navier-Stokes singularities. SIAM Journal on Mathematical Analysis, 45(3), pp.~1448--1459.
   
   \bibitem{L}
   Leray, J., 1934. Sur le mouvement d'un liquide visqueux emplissant l'espace. Acta mathematica, 63, pp.~193--248.
   
   \bibitem{LR}
   Lemari\'e-Rieusset, P.G., 2002. Recent developments in the Navier-Stokes problem. Chapman \& Hall/CRC.
   
   \bibitem{MMP}
   Maekawa, Y., Miura, H. and Prange, C., 2019. Local energy weak solutions for the Navier–Stokes equations in the half-space. Communications in Mathematical Physics, 367, pp.~517--580.
   
   \bibitem{MS}
   Maremonti, P. and Shimizu, S., 2018. Global existence of solutions to 2-D Navier–Stokes flow with non-decaying initial data in half-plane. Journal of Differential Equations, 265(10), pp.~5352--5383.
   
  \bibitem{OP}
   O\.za\'nski, W. S. and Pooley, B., 2018.  Leray's fundamental work on the Navier-Stokes equations: a modern review of ``Sur le mouvement d’un liquide visqueux emplissant l’espace'', Partial Differential Equations in Fluid Mechanics, London Math. Soc. Lecture Note Ser., 452, Cambridge Univ. Press, pp.~113--203.
      
   \bibitem{T}
   Tsai, T.P., 2018. Lectures on Navier-Stokes equations (Vol.~192). American Mathematical Soc.

   \end{thebibliography}
\end{document}